\newtheorem{theorem}{Theorem}
\newtheorem{condition}[theorem]{Condition}
\newtheorem{corollary}[theorem]{Corollary}
\newtheorem{definition}[theorem]{Definition}
\newtheorem{lemma}[theorem]{Lemma}
\newtheorem{proposition}[theorem]{Proposition}
\newtheorem{remark}[theorem]{Remark}
\newcommand{\mr}{\mathbb{R}}
\newcommand{\E}{\mathbb{E}}
\newcommand{\mc}{\mathcal}
\newcommand{\lan}{\langle}
\newcommand{\ran}{\rangle}
\newcommand{\td}{\tilde}
\newcommand{\eps}{\epsilon}
\newcommand{\rmd}{\mathrm{d}}
\title{A McKean-Vlasov SDE and particle system with interaction from reflecting boundaries}
\author[1]{Michele Coghi \thanks{michele.coghi@unitn.it}}
\author[2]{Wolfgang Dreyer \thanks{dreyer@wias-berlin.de}}
\author[3,2]{Peter K.~Friz \thanks{friz@math.tu-berlin.de }}
\author[2]{Paul Gajewski\thanks{gajewskp@wias-berlin.de}}
\author[2]{Clemens Guhlke\thanks{guhlke@wias-berlin.de}}
\author[4]{Mario Maurelli \thanks{mario.maurelli@unimi.it }}
\affil[1]{Dipartimento di Matematica, Universit\`a degli Studi di Trento, via Sommarive 14, 38123 Povo (Trento), Italy}
\affil[2]{Weierstrass Institute for Applied Analysis and Stochastics, Mohrenstra\ss e 39, 10117 Berlin, Germany}
\affil[3]{Institut f\"ur Mathematik, Technische Universit\"at Berlin, Stra\ss e des 17. Juni 136, 10623 Berlin,	Germany}
\affil[4]{Dipartimento di Matematica, Universit\`a degli Studi di Milano, via Saldini 50, 20133 Milano, Italy}
\begin{document}



\maketitle

\begin{abstract}
We consider a one-dimensional McKean-Vlasov SDE on a domain and the associated mean-field interacting particle system. The peculiarity of this system is the combination of the interaction, which keeps the average position prescribed, and the reflection at the boundaries; these two factors make the effect of reflection non local. We show pathwise well-posedness for the McKean-Vlasov SDE and convergence for the particle system in the limit of large particle number.
\end{abstract}


%

\tableofcontents

\section{Introduction}

In this paper, we consider a system of $N$ interacting one-dimensional diffusions, with the following two main features: a) they are confined in a bounded domain with reflecting boundaries; b) their empirical average is prescribed. We show that, as the number of particles $N$ goes to infinity, the system converges to the unique solution to a suitable McKean-Vlasov SDEs on the domain.

\textbf{The model and the results}. The propotypical example is the following one:
\begin{align}
\begin{aligned}\label{eq:prototype}
&\rmd X^{i,N}_t = \rmd W^i_t + \rmd K^N_t -\rmd k^{i,N}_t,\\
&X^{i,N}_t\in [0,1],\quad \rmd k^{i,N}_t = n(X^{i,N}_t)\rmd |k^{i,N}|_t,\quad \rmd |k^{i,N}|_t = 1_{X^{i,N}_t\in \{0,1\}}\rmd |k^{i,N}|_t,\\
&\frac{1}{N}\sum_{i=1}^N X^{i,N}_t =q.
\end{aligned}
\end{align}
Here $W^i$ are independent real Brownian motions, $q$ is the given average in $(0,1)$, $n$ is the outer normal on $\partial[0,1] = \{0,1\}$ and the solution is a triple $X^{(N)}=(X^{i,N})_{i=1,\ldots N}$, $k^{(N)}=(k^{i,N})_{i=1,\ldots N}$, $K^N$ satisfying the above system; $|k^{i,N}|$ denotes the total variation process of $k^{i,N}$. We will sometimes omit the superscript $N$ from the notation. The term $-k^i$ represents the reflection of the process $X^i$ at the boundary of $[0,1]$ and the term $K$ (independent of $i$) represents the interaction between the particles, which keeps the average equal to $q$.
More generally, for modelling purpose, we consider also the case with a given drift $\mu:[0,1]\rightarrow \mr$, a given time-dependent average $q:[0,T]\rightarrow (0,1)$ and a constant noise intensity $\sigma\in \mr$, namely we take the system
\begin{align}
\begin{aligned}\label{eq:particle_eq_intro}
&\rmd X^{i,N}_t = -\mu(X^{i,N}_t)\rmd t +\sigma\rmd W^i_t + \rmd K^N_t -\rmd k^{i,N}_t,\\
&X^{i,N}_t\in [0,1],\quad \rmd k^{i,N}_t = n(X^{i,N}_t)\rmd |k^{i,N}|_t,\quad \rmd |k^{i,N}|_t = 1_{X^{i,N}_t\in \{0,1\}}\rmd |k^{i,N}|_t,\\
&\frac{1}{N}\sum_{i=1}^N X^{i,N}_t =q(t).
\end{aligned}
\end{align}
The last line of the above system can be easily converted into an expression for $K^N$ in terms of $X^{(N)}$ and $k^{(N)}$, namely
\begin{align}\label{eq:K_k}
\rmd K^N_t = (\frac{1}{N}\sum_{i=1}^N \mu(X^i_t) +\dot{q}(t))\rmd t -\sigma\frac{1}{N}\sum_{i=1}^N\rmd W^i_t +\frac{1}{N}\sum_{i=1}^N \rmd k^i_t.
\end{align}
The main novelty of this work is the peculiar combination of the reflecting boundary and the condition on the average of the particles. This combination is reflected in formula \eqref{eq:K_k}, where the interaction $\rmd K$ depends also on the empirical average of $\rmd k^i$. To guess the limiting behaviour (as $N\to \infty$) of the system \eqref{eq:particle_eq_intro}, we can replace the average over particles $N^{-1}\sum_{i=1}^N$ with the average over the probability space $\E$. In this way, we get the following McKean-Vlasov SDE on the domain $[0,1]$:
\begin{align}
\begin{aligned}\label{eq:McKVla_intro}
&\rmd \bar X_t = -\mu(\bar X_t)\rmd t +\sigma\rmd W_t + \rmd \bar K_t -\rmd \bar k_t,\\
&\bar X_t\in [0,1],\quad \rmd \bar k_t = n(\bar X_t)\rmd |\bar k|_t,\quad \rmd |\bar k|_t = 1_{\bar X_t\in \{0,1\}}\rmd |\bar k|_t,\\
&\E \bar X_t =q(t),
\end{aligned}
\end{align}
where $W$ is a real Brownian motion and the solution is a triple $\bar{X},\bar{k},\bar{K}$ satisfying the above SDE. As in the particle system, the last line of \eqref{eq:McKVla_intro} can be converted into an expression for $\bar K$:
\begin{align}\label{eq:K_k_bar}
\rmd \bar K_t = (\E \mu(\bar X_t) +\dot{q}(t))\rmd t +\E \rmd \bar k_t.
\end{align}
Our main results Theorems \ref{thm:one}, \ref{thm:convergence} and Proposition \ref{pro: convergence rate} state roughly speaking that the McKean-Vlasov SDE \eqref{eq:McKVla_intro} is well-posed in the pathwise sense and the empirical measure $\frac{1}{N}\sum_{i=1}^N \delta_{X^{i,N}}$ from the system \eqref{eq:particle_eq_intro} converges in probability, as $N\to\infty$, to the law of the unique solution to \eqref{eq:McKVla_intro}, with its time marginals converging in $L^1$.

\textbf{Motivation}. Our motivation to study this system comes from a specific model for charging and discharging in a lithium-ion battery, introduced in \cite{DreGulHer2011} and further studied and expanded for example in \cite{DHMRW2015,GGMFD2018}. In this model, roughly speaking, the lithium atoms enter and exit iron phosphate particles in the cathode. The $Y^i_t$ represents the filling degree of the $i$-th iron phosphate particle at time $t$ (for example, $Y^i=1$, $=0$ resp., stands for the $i$-th particle fully filled with lithium atoms, fully empty resp.); by this definition of $Y^i$, $Y^i$ has to stay in $[0,1]$. The prescribed average $q(t)$ of $Y^i_t$ represents the current in the battery, which is given and is proportional to the percentage of lithium atoms inside the ensemble of particles. The reason to consider reflecting boundaries $Y^i\in \{0,1\}$ comes from the boundary conditions in the Fokker-Planck equation in \cite{DHMRW2015}; this choice of boundary conditions is convenient mathematically, though the physical motivation is less clear. From a mathematical perspective, \cite{DHMRW2015} shows global well-posedness for the nonlinear nonlocal Fokker-Planck equation associated with the McKean-Vlasov SDE \eqref{eq:McKVla_intro}, namely
\begin{align}
\begin{aligned}\label{eq:FPE_intro}
&\partial_t u(t,x) +\partial_x [(-\mu(x)+\dot{\bar K}_t)u(t,x)] =\frac{\sigma^2}{2}\partial_x^2 u(t,x),\quad t>0,\,x\in (0,1),\\
&\frac{\sigma^2}{2}\partial_x u(t,x) +(\mu(x)-\dot{\bar K}_t)u(t,x) = 0,\quad t>0,\,x\in\partial(0,1),\\
&\int_0^1 xu(t,x) \rmd x = q(t),\quad t\ge 0.
\end{aligned}
\end{align}
The paper \cite{GGMFD2018} considers the particle system \eqref{eq:FPE_intro}, associated with \eqref{eq:particle_eq_intro}, even in a more general version (to take into account variations in the radius of iron phosphate particles), but removes the boundaries: without boundaries, the particle system \eqref{eq:particle_eq_intro} is reduced to a classical system of mean field interacting diffusions, for which convergence to the corresponding McKean-Vlasov SDE is well-known. Hence the current paper arises from the natural (from a mathematical viewpoint) question whether convergence of the particle system for the model \eqref{eq:FPE_intro} holds. We also point out that interacting diffusions with constraints both on the domain  and on the empirical measure of the diffusions appear in several contexts, see e.g. \cite{BriChaGuiLab2020,Jab2017,Bar2020} below.

\textbf{Background}. McKean-Vlasov SDEs are SDEs where the drift depends also on the law of the solution, namely SDEs of the form
\begin{align}\label{eq:McKVla_no_bdry}
\rmd \bar X_t = b(\bar X_t,\text{Law}(\bar X_t))\rmd t +\rmd W_t,
\end{align}
where $W$ is a given Brownian motion (we do not consider here the case of general diffusion coefficients). McKean-Vlasov SDEs are related to the mean field interacting diffusions, namely systems of the form
\begin{align*}
\rmd X^i_t = b(X^i_t,\frac{1}{N}\sum_{i=1}^N\delta_{X^i_t})\rmd t +\rmd W^i_t,\quad i=1,\ldots N,
\end{align*}
where $W^i$ are independent Brownian motions. By classical resuls, e.g. \cite{sznitman1991topics, Mel1996, MR780770}, if $b$ is bounded and smooth (smoothness with respect to the measure variable is understood in the sense of Wasserstein distance), then the McKean-Vlasov SDE \eqref{eq:McKVla_no_bdry} is pathwise well-posed and, as $N\to\infty$, the empirical measure $\frac{1}{N}\sum_{i=1}^N\delta_{X^i}$ converges to the law of the solution $\bar X$ to the McKean-Vlasov SDE. This convergence result is a law of large numbers type result and is related to the asymptotic independence of the particles, the so-called propagation of chaos, see e.g. \cite{Szn1984}. The Fokker-Planck equation associated with the McKean-Vlasov SDE \eqref{eq:McKVla_no_bdry}, namely the equation for $\text{Law}(\bar X_t)$, is nonlinear, see Section \ref{sec: McKean-Vlasov pde}.

SDEs on a domain $\bar D\subseteq \mr^m$ with reflecting boundaries take the form
\begin{align}\label{eq:SDE_refl}
&\rmd X_t = b(X_t)\rmd t +\rmd W_t -\rmd k_t,\\
&X_t\in \bar{D},\quad \rmd k_t=n(X_t)\rmd |k|_t,\quad \rmd |k|_t = 1_{X_t\in \partial D}\rmd |k|_t,
\end{align}
where $W$ is a Brownian motion (we do not consider general diffusion coefficients) and $n(x)$ is the outer normal to $D$ at $x$; $|k|$ represents the total variation process associated with $k$. The solution is a couple $(X,k)$ and $-\rmd k$ represents a ``kick'', in the inward normal direction $-n(X)$, that the diffusion $X$ receives anytime it reaches the boundary, and that makes $X$ stay in the domain $\bar D$. Pathwise well-posedness for the SDE \eqref{eq:SDE_refl} has been proved under quite general conditions, see e.g. \cite{LioSzn1984,Tan1979}. The Fokker-Planck equation associated with the SDE \eqref{eq:SDE_refl} has Neumann-type boundary conditions, see Section \ref{sec:RDb}.

To our knowledge, the first work to deal with both McKean-Vlasov SDEs and reflecting boundaries is \cite{Szn1984}: there pathwise well-posedness is proved for the SDE
\begin{align*}
&\rmd \bar X_t = b(\bar X_t,\text{Law}(\bar X_t))\rmd t +\rmd W_t -\rmd \bar k_t,\\
&\bar X_t\in \bar{D},\quad \rmd \bar k_t=n(\bar X_t)\rmd |\bar k|_t,\quad \rmd |\bar k|_t = 1_{\bar X_t\in \partial D}\rmd |\bar k|_t,
\end{align*}
and convergence is shown for the particle system
\begin{align*}
&\rmd X^i_t = b(X^i_t,\frac{1}{N}\sum_{i=1}^N\delta_{X^i_t})\rmd t +\rmd W^i_t -dk^i_t, \quad i=1,\ldots N,\\
&X^i_t\in \bar{D},\quad \rmd k^i_t=n(X^i_t)\rmd |k^i|_t,\quad \rmd |k^i|_t = 1_{X^i_t\in \partial D}\rmd |k^i|_t,\quad i=1,\ldots N.
\end{align*}
Other works have studied McKean-Vlasov SDEs with reflecting boundaries, in more general contexts, especially in the context of backward SDEs, see e.g. \cite{Li2014}. However, in \cite{Szn1984} and in many of these works, the reflection is local, that is: at the level of the McKean-Vlasov SDE, $\text{Law}(\rmd \bar k_t)$ does not appear in the SDE; at the level of the particle system, $\rmd k^i$ acts only on the $i$-th particle $X^i$.

Closer to our work are the mean reflected (possibly backward) SDEs and related particle systems, introduced in \cite{BriEliHu2018,BriChaGuiLab2020}, and their generalization, namely the SDEs with a constraint on the law and related particle systems, introduced in \cite{BriCarChaHu2020}. Roughly speaking, in these SDEs a reflecting boundary is imposed on the law of the process. The typical example of this type of SDEs is the following:
\begin{align}
\begin{aligned}\label{eq:SDE_constraint_law}
&\rmd \bar X_t = b(\bar X_t,\text{Law}(\bar X_t))\rmd t +\rmd W_t -\rmd \bar K_t\\
&\text{Law}(\bar X_t) \in \bar{\mathcal{D}},\quad \rmd \bar K_t \text{ deterministic, ``non-zero only when $\bar X_t$ is on the boundary of $\bar{\mathcal{D}}$''}.
\end{aligned}
\end{align}
For such systems, under suitable conditions, \cite{BriCarChaHu2020} proves well-posedness and particle approximation. As a particular case, taking $\bar{\mathcal{D}}=\{\rho\mid \int x\rho(\rmd x) \ge q\}$ for a given $q\in\mr$, the constraint becomes $\E[\bar X_t]\ge q$, which is morally comparable to our constraint $\E[\bar X_t]=q(t)$ in the last line of \eqref{eq:McKVla_intro}. Due to the assumptions on $\bar{\mathcal{D}}$ (which must have a non-empty ``interior''), the condition $\E[\bar X_t]=q(t)$ is not covered by \cite{BriCarChaHu2020}, but this is not a big limitation: the condition $\E[\bar X_t]\ge q$ is actually more difficult to take into account than $\E[\bar X_t]=q(t)$, which gives an explicit form for $\bar K$ and makes the SDE a classical McKean-Vlasov SDE. However, compared to our equation \eqref{eq:McKVla_intro}, the restriction to deterministic $\bar K_t$ in \cite{BriCarChaHu2020} does not allow to consider reflecting boundaries for the process $\bar X$ (for reflecting boundaries on $\bar X$, the reflection $\rmd \bar k$ is not deterministic). When one adds reflecting boundaries in \cite{BriCarChaHu2020}, additional difficulties come into play, see Remark \ref{rmk:penalization}.

Probably the closest work to ours is \cite{Jab2017}. This paper considers a more general case than \cite{BriCarChaHu2020}, in particular removing from \eqref{eq:SDE_constraint_law} the requirement that $\bar K$ is deterministic. In particular, taking
\begin{align}
\bar{\mathcal{D}}=\{\rho\mid \int x\rho(\rmd x) \ge q,\,\text{supp}(\rho)\subseteq [0,1]\}\label{eq:constraint_similar}
\end{align}
the constraint on $\bar X$ in \eqref{eq:SDE_constraint_law} becomes
\begin{align*}
\E \bar X_t \ge q,\quad \bar X_t \in [0,1],
\end{align*}
which is morally similar to our constraints $\E \bar X_t = q(t)$, $\bar X_t \in [0,1]$ in \eqref{eq:McKVla_intro}. The work \cite{Jab2017} constructs a weak solution to the SDE \eqref{eq:SDE_constraint_law} (without the requirement of deterministic $\bar K$, in particular including condition \eqref{eq:constraint_similar}) by a penalization approach. However it does not show uniqueness, nor it considers the related particle system.

The work \cite{Bar2020} studies a system of $N$ Brownian particles $X^i$ hitting a Newtonian moving barrier $Y$. For this system the paper proves the convergence, as $N\to \infty$, to a McKean-Vlasov type SDE, whose associated Fokker-Planck equation solves a free-boundary problem. Now, in the frame of the moving barrier, that is taking $Z^i_t=X^i_t-Y_t$, the system in \cite{Bar2020} is similar to our model \eqref{eq:particle_eq_intro}, without the drift $\mu$, but with one important difference: in the expression \eqref{eq:K_k} for $\rmd K^N$, the term $\frac{1}{N}\sum_{i=1}^N \rmd k^i$ is replaced in \cite{Bar2020} by $\frac{1}{N}\sum_{i=1}^N k^i \rmd t$ (times some constant). In particular, unlike here, the term $\rmd K^N$ becomes of bounded variation in time in \cite{Bar2020}.

The paper \cite{DjeEliHam2019} studies the case of backward SDEs with reflecting boundaries depending both on the diffusion process $\bar X$ and on the law of $\bar X$, showing well-posedness for this type of SDEs and convergence for the corresponding penalization scheme. However, by the precise assumptions in \cite{DjeEliHam2019}, a condition of the form $\E[\bar X_t]\ge q$ or $=q$ cannot be taken in \cite{DjeEliHam2019}.

Finally, we mention the works \cite{CacCarPer2011,DelIngRubTan2015} and \cite{HamLedSoj2019}: they deal with systems of interacting diffusions, which arise respectively in neuroscience and in finance, and include also a nonlocal effect of boundaries, though the boundaries are not reflecting. More precisely, when one or more particles hit the boundary, the other particles make a jump proportional to the number of particles hitting the boundary.

\textbf{Novelty of our work}. The main feature of our model is the combination of reflecting boundaries and nonlocal interaction. At the level of the McKean-Vlasov SDE \eqref{eq:McKVla_intro}, this combination appears in the formula \eqref{eq:K_k_bar} for the interacting term $\rmd \bar K$, which contains the term $\rmd \bar k$. At the level of the particle system \eqref{eq:particle_eq_intro}, this fact corresponds to an oblique reflection for $X^{(N)}=(X^{1,N},\ldots X^{N,N})$, where the direction of reflection depends on the empirical measure $\frac{1}{N}\sum_{i=1}^N \delta_{X^{i,N}_t}$. As explained before, to our knowledge, this kind of systems is studied only in \cite{Jab2017} (which shows an existence result for the McKean-Vlasov SDE).

More specifically, in our model \eqref{eq:particle_eq_intro}, the nonlocal interaction comes from the condition $\frac{1}{N}\sum_{i=1}^N X^{i,N}_t =q(t)$ and keeps the direction of reflection for $X^{(N)}$ on the iperplane $\{x\mid \frac{1}{N}\sum_{i=1}^N x^i=0\}$. Intuitively, when a particle $X^i$ hits the boundary and receive a ``kick'' $-\rmd k^i$, then the other particles receive a kick $\frac{1}{N}\rmd k^j$ in the opposite direction, so that the average of the particles remains $q(t)$.

As we will explain below, our proof relies strongly on the constraint $\frac{1}{N}\sum_{i=1}^N X^{i,N}_t =q(t)$ and so on the specific form of the interaction. The question of well-posedness and particle approximation for a more general dependence of $\rmd \bar K$ on $\rmd \bar k$, or equivalently, of the direction of reflection of $X^{(N)}$ on the empirical measure, remains open.

\textbf{Method of proof}. The main idea of the proof is that both $\rmd \bar k$ and $\rmd \bar K$ in \eqref{eq:McKVla_intro} act as projectors. Precisely, $\rmd \bar k$ acts as projector on the set of paths staying in $[0,1]$: indeed, if $(\bar X,\bar k^{\bar X},\bar K^{\bar X})$ and $(\bar X,\bar k^{\bar X},\bar K^{\bar X})$ are two solutions to \eqref{eq:McKVla_intro}, then $(\bar X-\bar Y)\cdot \rmd \bar k^{\bar X}\le 0$. The term $\rmd \bar K$ acts as projector in $L^2(\Omega)$ (where $(\Omega,\mathcal{A},P)$ is the underlying probability space) on the space of processes $Z$ with average $\E[Z_t]=q(t)$: indeed, if $(\bar X,\bar k^{\bar X},\bar K^{\bar X})$, $(\bar X,\bar k^{\bar X},\bar K^{\bar X})$ are two solutions, then $\E[(\bar X-\bar Y)\rmd \bar K^{\bar X}]=0$. This idea of projectors allows to show uniqueness for the McKean-Vlasov SDE \eqref{eq:McKVla_intro} easily.
This idea is also behind the proof of convergence of the particle system \eqref{eq:particle_eq_intro}.

Concerning the convergence of the particle system \eqref{eq:particle_eq_intro}, we use a pathwise approach introduced by Tanaka in \cite{MR780770} and revisited in \cite{CDFM2020} (and further developed in \cite{MR3299600,BaiCatDel2020} in the rough path context): for any fixed $\omega\in \Omega$, the particle system \eqref{eq:particle_eq_intro} can also be viewed as a McKean-Vlasov equation \eqref{eq:McKVla_intro}, where however the law on the driving signal $\bar W$ is not the Wiener measure, but the random empirical measure $L^{W,N}(\omega) = \frac{1}{N}\sum_{i=1}^N \delta_{W^i(\omega)}$. Under this viewpoint, the term $\rmd K^N$ is also a projector in $L^2$ on the space of processes with average $q(t)$, but where the underlying measure is the empirical measure $L^{W,N}(\omega)$ instead of the Wiener measure. Intuitively then, since $L^{W,N}(\omega)$ converges $P$-a.s. to the Wiener measure, we expect that $\rmd K^N$ should converge to the projector under the Wiener measure, that is $\rmd \bar K$; this should imply the convergence of the particle system to the McKean-Vlasov SDE \eqref{eq:McKVla_intro}.

However, a direct proof based only on this pathwise approach seems not easy. Indeed, to make this argument work, one needs to create an optimal coupling (in the sense of Wasserstein distance) between the Wiener measure and the random empirical measure $L^{W,N}(\omega)$, and such coupling does not have a Gaussian structure. Having a Gaussian measure on the driving signal allows to use classical stochastic analysis tools like It\^o formula; such tools give in turn uniform $BV$ estimates on $\rmd K^N$, which are also needed in the proof. Moreover, the pathwise argument gives a convergence only of the one-time marginals, that is, convergence of $\frac{1}{N}\sum_{i=1}^N\delta_{X^{i,N}_t(\omega)}$ (as a random measure on $[0,1]$) to the law of $\bar{X}_t$ for every fixed $\bar X_t$, and not convergence of $\frac{1}{N}\sum_{i=1}^N\delta_{X^{i,N}_{\cdot}(\omega)}$ as a measure on the path space $C([0,T];[0,1])$.
	
For these reasons, we use first a tightness argument. Namely we show uniform (in $N$) $BV$ and H\"older type bounds on the solution to the particle system \eqref{eq:particle_eq_intro}, which give a tightness result; we show then that any limit point of \eqref{eq:particle_eq_intro} satisfies the McKean-Vlasov SDE \eqref{eq:McKVla_intro}, obtaining at once convergence of the particle system and existence for the McKean-Vlasov SDE itself. Once we have these uniform $BV$ bounds and the existence for the McKean-Vlasov SDE, we can then use the pathwise argument explained before. From this pathwise argument, we also get the rate of convergence $O(1/\sqrt{\log(N)})$ for $\frac{1}{N}\sum_{i=1}^N\delta_{X^{i,N}_{t}(\omega)}$.

The fact that $\rmd \bar k$ acts like a projector is classical and used since at least \cite{LioSzn1984}. However, combining this fact with standard fixed-point arguments for McKean-Vlasov SDEs, as in \cite{sznitman1991topics,Szn1984}, seems not easy in presence of nonlocal effects of boundaries as here. The reason  in our model is that $\E[\rmd \bar k]$ is just a $BV$ and continuous term in time (not Lipschitz-continuous) without a distinguished sign. Even the use of the Lipschitz bounds from \cite{MR1110990}, on the $\sup$ norm of the reflecting term $\bar k$ in terms of the driving signal, seems not too helpful. For our model, the fact that $\rmd \bar K$ acts also as projector allows to overcome this difficulty.


We remark that the action of $\rmd \bar K$ as projector on the processes with prescribed average, as well as some tricks used in the proof of uniform H\"older bounds for the particle system, are quite specific to our model. For extension to more general nonlocal effects of the reflecting boundary, other methods may be useful, which we do not explore here, for example the penalization method (e.g. \cite{Men1983}, used in \cite{Jab2017}), the approach based on Lions' derivative (e.g. \cite{Lio2008,CarDelBookI}, used in \cite{BriCarChaHu2020}), pathwise approaches (e.g. \cite{DeyGubHofTin2019,Aid2016,FerRov2013}), PDE-based or singular interaction methods (see the paragraph below). 


\textbf{The PDE and singular interaction viewpoint}. The Fokker-Planck equation \eqref{eq:FPE_intro} associated with the McKean-Vlasov SDE \eqref{eq:McKVla_intro}, that is the equation for the probability density function (pdf) $u(t,\cdot)$ of $\bar X_t$, is a nonlinear nonlocal PDE. From this PDE \eqref{eq:FPE_intro}, we can get another expression for $\rmd \bar K$ in terms of $u$:
\begin{align*}
\dot{\bar K}_t = \dot{q}(t) +\int_0^1 \mu(x)u(t,x)\rmd x +\frac{\sigma^2}{2}(u(1)-u(0)).
\end{align*}
The nonlocal effects of the reflecting boundary appear as an additional drift term (here $u(1)-u(0)$) depending on the values of the probability density function $u$ at the boundary. Hence our model can be interpreted as a McKean-Vlasov SDE with reflecting boundaries and singular interaction at the boundary, and one may try to use PDE methods or an approach to singular interaction to deal with our model.

The literature about McKean-Vlasov SDEs with singular interaction and about PDE-based approach to McKean-Vlasov SDEs is large, though we are not aware of a work which can cover easily our model. We only mention some references related to our model. We mention \cite[Chapter II]{sznitman1991topics}, which deals with viscous Burgers equation as a McKean-Vlasov SDE with Dirac delta interaction (without boundaries), in particular the McKean-Vlasov SDE is also driven by the probability density function of the solution. We also mention \cite{BosJab2011,BosJab2015,BosJab2018} which study systems of interacting second-order diffusions (that is, SDEs for the acceleration of the particles) in a domain. Such systems contain a form of singular interaction and a form of reflection at the boundaries, though both interaction and reflection are of different type than in our model. Finally we mention \cite{Kol2007} for an approach based on semigroup theory to McKean-Vlasov SDEs and particle approximation.

In this paper we do not explore the PDE viewpoint and we give in Section \ref{sec:RDb} a formal argument, without any rigorous proof, to show that \eqref{eq:FPE_intro} is indeed the Fokker-Planck equation for the SDE \eqref{eq:McKVla_intro}.



\textbf{Organization of the paper}. The paper is organized as follows: In Section \ref{sec:PDEreview} we show the formal link, without rigorous proofs, between SDEs and Fokker-Planck equations, in presence of boundaries and mean-field interaction. In Section \ref{sec:setting} we give the precise setting and the main results. The proofs of these results are given in Section \ref{sec:proof}. Finally, in the Appendix \ref{app:B} we show well-posedness for the particle systems \eqref{eq:particle_eq_intro}.

\subsection{Acknowledgements}

The authors acknowledge support from Einstein Center for Mathematics Berlin through MATHEON projects C-SE8 and C-SE17 `Stochastic methods for the analysis of lithium-ion batteries'. M.C. and M.M. acknowledge support from Hausdorff Research Institute for Mathematics in Bonn under the Junior Trimester Program `Randomness, PDEs and Nonlinear Fluctuations'. This project has received funding from the European Research Council (ERC) under the European Union’s Horizon 2020 research and innovation programme (grant agreement No. 683164, PI P.K.Friz.) 

We thank Boualem Djehiche, Jean-Francois Jabir, John Schoenmakers and Andreas Sojmark for pointing out relevant references on McKean-Vlasov SDEs on bounded domains and suggesting possible alternative approaches to our problem.

\section{Review on PDEs and diffusion processes}  \label{sec:PDEreview} 

In this section we revisit the link between second-order PDEs and associated SDEs, both in presence of boundary and with nonlinearity, as the PDE \eqref{eq:FPE_intro} we consider here; we took inspiration from \cite{Son2007}. Our aim here is not to give rigorous results but to provide an easy, yet clear ``translator'' between SDEs and PDEs, which applies, but is not restricted, to our case and shows in particular why \eqref{eq:McKVla_intro} is the SDE corresponding to \eqref{eq:FPE_intro}. For this reason, we keep all the computations at a formal level, without any rigorous proof.

In the following we focus our attention on the 1D case, mostly for simplicity. We take $b:I\rightarrow\mr$ a given vector field on $\mr$ or on an interval $I$ of $\mr$ when specified, and $\sigma>0$ positive constant; $W$ is a real Brownian motion. For two functions $f,g:I\rightarrow \mr$, we call
\begin{align*}
\lan f,g \ran = \int_I f(x)g(x)\rmd x
\end{align*}
their $L^2$ scalar product.

\subsection{Diffusion, forward and backward PDEs} \label{sec:PDE1o1}

It is well-known that the (forward) PDE 
\begin{align*}
\partial_t p = (\sigma^2/2) \partial_y^2 p - \partial_y(bp)   \text{ for $t>s$,} 
\end{align*}
with time-$s$ initial data $\delta_x$, models the evolution of the transition density function $p=p(s,x;t,y)$ of the diffusion process $X$, given by
\begin{align*}
      \rmd X_t = \sigma \rmd W_t + b (t,X_t)\rmd t, \ X_s = x \ .
\end{align*}
More generally, if $u$ is the solution
to this forward PDE with time-$s$ initial data $u_s$, which is assumed to be a probability density function, then $u$ is the pdf of the solution of the same SDE but initial law $u_s(x) \rmd x$.

The dual viewpoint will be important. Consider the (backward) PDE
\begin{align*}
- \partial_s v = (\sigma^2/2)\partial_x^2 v + b \partial_x v
\end{align*}
with terminal data $v(t,.) = \Psi$. Assuming $v$ to be regular enough, 
It\^o's formula gives a representation of the backward PDE solution $v$ as follows:
\begin{align} \label{v1}
 v (s,x) = E [ \Psi (X_ t) | X_s = x] .
\end{align} 
By a simple formal computation, one shows that
\begin{align*}
     \frac{\rmd}{\rmd r}\lan p(s,x,r,\cdot),v(r,\cdot) \ran=0,
\end{align*}
which implies
\begin{align} \label{v2}
    v (s,x) = \int \Psi (y) p(s,x,t,y) \rmd y .
\end{align}
At last, comparing \eqref{v1} and \eqref{v2} shows that $p(s,x,t,y)\rmd y$ is indeed the law of $X_t$, started at $X_s=x$, as claimed in the beginning of this paragraph.

\subsection{Reflected diffusion and PDEs with boundary} \label{sec:RDb}

We now discuss the case of a spatial domain, with focus on the simple case $I=[0,1]$. A reflected SDE on the domain $[0,1]$ is an SDE of the form
\begin{align*}
      & \rmd X^\circ_t = \sigma \rmd W_t + b(X^\circ_t) \rmd t - \rmd k_t  \ , \ X^\circ_s = x, \\
      &  X^\circ_t \in \bar I \ \  \forall t \ge s, \\
      &\rmd |k| = 1_{X^\circ_t\in \{0,1\}} \rmd|k|,\ \ \rmd k = n(X^\circ_t)\rmd|k|.
\end{align*}
Here, $n(0)=-1, n(1)=+1$ are the outer normals of our domain $[0,1]$. The solution is a couple $(X,k)$ satisfying the above condition (it is implicitly assumed that $k$ has $BV$ paths). The last condition means that $k$ acts only when $X$ is on the boundary, giving a small ``kick'' so that $X$ does not leave the domain $I$.

From the PDE viewpoint, the transition density function $p^\circ(s,x;t,y)$ associated with $X$ is a solution to the following forward forward equation
\begin{align*}
       & \partial_t p^\circ = (\sigma^2/2) \partial_y^2 p^\circ - \partial_y(bp^\circ) \text{ in $I^\circ$ for $t>s$,} \\ 
       &  (\sigma^2/2) \partial_y p^\circ - b p^\circ = 0   \text{ at $\partial I$ for $t>s$} \\
       & p^\circ(s,x,s,\cdot)  = \delta_x ,
\end{align*}
More generally, if $u$ is the solution to this forward PDE with time-$s$ initial data $u_s$, assumed to be a probability density function, then $u$ is the pdf of the solution of the same SDE but initial law $u_s(x)\rmd x$. We show this fact formally using the dual viewpoint.
 
First step: Let $v^\circ$ be a regular solution to the dual backward equation, i.e. time-$t$ terminal value probelm with Neumann (no-flux) boundary data,
\begin{align*}
       & - \partial_sv^\circ  = (\sigma^2/2) \partial_x^2v^\circ + b \partial_x v^\circ  \text{ in $I^\circ$, for $s<t$ ,} \\ 
       & \partial_x v^\circ (s,0)  = \partial_x v^\circ (s,1) = 0   \text{ at $\partial I$, for all $s<t$ ,} \\
       & v(t,.)  = \Psi \ 
\end{align*}
with some (regular) time-$t$ terminal data $\Psi$. Then necessarily $v^\circ$ has the representation 
\begin{equation} \label{FKrefl}
v^\circ (s,x) = E[ \Psi(X^\circ_t) | X^\circ_s = x] .
\end{equation}
Indeed, It\^o formula gives
\begin{align*}
\rmd [v(s,X_s)] &= \partial_s v(X) \rmd s + \partial_x v(X) \rmd X +\frac{\sigma^2}{2} \partial_x^2 v(X) \rmd s\\
&= [(\partial_s +b\partial_x +\frac{\sigma^2}{2} \partial_x^2) v](X) \rmd s +\sigma\partial_x v(X) \rmd W -\partial_x v(X) \rmd k
= \sigma\partial_x v(X) \rmd W,
\end{align*}
where we have used the equation for $v$ to kill the first addend in the second line and the boundary conditions on $v$ to kill the term with $\rmd k$. Taking expectation, we get that $E[v(s,X_s)]$ is constant in $s$, which implies (\ref{FKrefl}).

Second step: Again with simple formal computation, one shows, 
\begin{align*}
    \frac{\rmd}{\rmd r}\lan p^\circ(s,x,r,\cdot),v^\circ(r,\cdot) \ran=0 ,
\end{align*}
which implies
\begin{align} \label{v4}
     v^\circ(s,x) = \lan p_t,\Psi \ran .
\end{align}
From (\ref{FKrefl}) and (\ref{v4}) we conclude
\begin{align*}
\lan p^\circ (s,x,t,\cdot),\Psi \ran  = E[ \Psi(X^\circ_t) | X^\circ_s = x] \ .
\end{align*}
 Since this is true for every regular $\Psi$, then $p^\circ$ is the law of $X^\circ$ conditional to $X^\circ_s = x$.

If $u^\circ$ is a solution to the same forward equation as above, but with generic initial condition $u_s$, integrating $p$ in $u_s(x)$ shows that $u$ is the pdf of the law of $X^{\circ, u}$, the process satisfying the same SDE but with initial law $u_s(x)\rmd x$.


\subsection{McKean-Vlasov diffusion, nonlinear mean-field PDEs}
\label{sec: McKean-Vlasov pde}

Here we introduce the McKean-Vlasov setting. For a given drift $\bar b :\mr\times \mc{P}(\mr)\rightarrow \mr$, where $\mc{P}(\mr)$ is the space of probability measures on $\mr$, we consider
\begin{align*}
\rmd \bar X_t = \sigma \rmd W_t + \bar b (\bar X_t, \text{Law}(\bar X_t))  \rmd t, \  \text{ Law}(\bar X_s) \text{ given}.
\end{align*}
Existence and uniqueness for regular bounded $\bar{b}$ (regularity with respect to the measure variable is usually in terms of the Wasserstein distance) are proved via a fixed-point argument on the map $(m_t)_{t\ge s}\mapsto (\text{Law}(X^m_t))_{t\ge s}$, where $X^m$ is the solution to the (classical) SDE
\begin{align*}
\rmd X^m_t = \sigma \rmd W_t + \bar b [ X^m_t,m_t]  \rmd t, \  \text{ Law}(X^m_s) = m_s.
\end{align*}
See Sznitman \cite[Theorem 1.1]{sznitman1991topics} for the classical case of $\bar{b}$ linear in the measure argument or, for instance, \cite{CDFM2020} for a more general $\bar{b}$.

The corresponding forward Fokker-Planck equation now takes the form
\begin{align}
\partial_t \bar u = (\sigma^2/2) \partial_y^2 \bar u - \partial_y [\bar b (y, \bar u_t(\cdot)) \bar u  ]\label{eq:bar_u}
\end{align}
with time-$s$ initial data $\bar u_s = \text{Law}(\bar X_s)$ (for notational sake, we blur the
difference between $u(t,.)$ and $u(t,y)dy$). Note that this is a nonlinear nonlocal PDE.

We again outline why $\bar u (t,.)$ is indeed the law of $\bar X_t$. Fix the family $\bar u := \{ \bar u (t,.): t \ge s \}$ and consider the (linear!) backward PDE
\begin{align*}
- \partial_s v = (\sigma^2/2) \partial_x^2 v + \bar b [x, \bar u (s,.) ] \partial_x v
\end{align*}
with terminal data $v_t = \Psi$. As in Section \ref{sec:PDE1o1}, we see that the corresponding forward PDE (which by construction coincides with \eqref{eq:bar_u}) yields the law of a diffusion process $X^{\bar u}$, that is $\text{Law}(X^{\bar u}_t) = \bar u (t,\cdot)$. But then
$X^{\bar u}$ solves the McKean-Vlasov SDE, therefore (by uniqueness for the McKean-Vlasov SDE) $X^{\bar u} = \bar{X}$ and so $\bar u_t$ is the pdf of the marginal $\bar{X}_t$ at time $t$.

\subsection{Reflected McKean-Vlasov diffusion, mean-field PDE with boundary}

We can consider the simplest case of reflected McKean-Vlasov SDE on $I$, as in \cite{Szn1984}, namely
\begin{align*}
& \rmd \bar X_t = \sigma \rmd W_t + \bar b [\bar X_t, \text{Law}(\bar X_t)]  \rmd t - \rmd \bar k_t, \  \text{ Law}(\bar X_s) \text{ given},\\
& \bar X_t \in \bar I \ \  \forall t \ge s, \\
& \rmd|\bar k| = 1_{\bar X_t\in \{0,1\}} \rmd|\bar k|,\ \ \rmd \bar k = n(\bar X_t)\rmd|\bar k|.
\end{align*}

Adapting the arguments in the two previous sections, one shows that the probability density function of $\bar X_t$ is given via following non-linear nonlocal forward PDE
\begin{align*}
& \partial_t \bar u_t = (\sigma^2/2) \partial_y^2 \bar u - \partial_y[\bar b [ y, \bar u_t (.)] \bar u] \text{ in $I^\circ$ for $t>s$\ ,} \\ 
&  (\sigma^2/2) \partial_y \bar u - \bar b [ \cdot ,  \bar u_t (.)] \bar u = 0   \text{ at $\partial I$ for $t>s$} ,
\end{align*}
with time-$s$ initial data $\bar u_s = \text{Law}(\bar X_s)$.

\subsection{Interaction coming from the boundaries}

As explained in the introduction, the SDE \eqref{eq:McKVla_intro} does not fall in the previous class. Indeed, the drift depends not only on the law of $\bar X_t$ but also on the law of $\rmd \bar k_t$. We will not treat here the general case of drifts depending on the law of $\rmd \bar k$,
but we focus our attention on our model.

We claim that the forward Fokker-Planck equation associated with \eqref{eq:McKVla_intro} is \eqref{eq:FPE_intro}: we show formally that, if $u$ satisfies the PDE \eqref{eq:FPE_intro}, with initial condition $u_0$, then $u(t,x)$ is the probability density function of the random variable $\bar X_t$ solving \eqref{eq:McKVla_intro}, with initial law $u_0$. The formal proof puts together the arguments for boundary problems and McKean-Vlasov SDEs with the additional difficulty of interaction coming from the boundary, for which we will use the constraint on the average in \eqref{eq:FPE_intro}.

Take $\dot{\bar K}$ as in the PDE \eqref{eq:FPE_intro}, call $(X^{\bar K},k^{\bar K})$ the solution of the reflecting SDE
\begin{align*}
&\rmd X^{\bar K} = \sigma \rmd W_t + (-  \mu ( X^{\bar K}) + \dot{\bar K}(t) ) \rmd t - \rmd k^{\bar K}, \  \text{ Law}(X^{\bar K}) = u_0,\\
& X^{\bar K}_t \in \bar I \ \  \forall t \ge 0, \\
& \rmd|k^{\bar K}| = 1_{X^{\bar K}_t\in \{0,1\}} \rmd|k^{\bar K}|,\ \ \rmd k^{\bar K} = n(X^{\bar K}_t)\rmd|k^{\bar K}|.
\end{align*}
As a consequence of Section \ref{sec:RDb} (applied with given $\dot{\bar K}$), the law of $X^{\bar K}_t$ must be given by $u(t,\cdot)$, i.e. the PDE solution to \eqref{eq:FPE_intro}. It remains to show that $X^\Lambda = \bar X$, the solution to the McKean-Vlasov \eqref{eq:McKVla_intro}. To this end, note that $\E[X^{\bar K}_t] = \int xu(t,x) \rmd x  = q(t)$, using the basic constraint in \eqref{eq:FPE_intro}. This shows that $X^{\bar K}$ is a solution to \eqref{eq:McKVla_intro} and, by uniqueness of this equation (formally, see also Theorem \ref{thm:one}), we conclude that $X^{\bar K} = \bar X$.

\section{The setting and the main results}\label{sec:setting}

\subsection{The particle system}

In the following, we consider a probability space $(\Omega,\mc{A},P)$ and independent Brownian motions $W^i$, $i=1\ldots N$, on a filtration $(\mc{F}_t)_t$ (satisfying the standard assumption). We are given a function of space $\mu:[0,1]\rightarrow \mr$ and a function of time $q:[0,T]\rightarrow[0,1]$. The assumptions on $\mu$ and $q$ will be given later. The noise intensity $\sigma$ is assumed to be constant (possibly $0$). 

We consider the system of $N$ interacting particles:
\begin{align}
\begin{aligned}\label{SDE_single}
&\rmd X^{i,N}_t = (-\mu(X^i_t)+\frac{1}{N}\sum^N_{j=1}\mu(X^{j,N}_t))\rmd t +\dot{q}(t)\rmd t +\sigma\rmd W^i -\frac{1}{N}\sum^N_{j=1}\sigma\rmd W^j_t -\rmd k^{i,N}_t +\frac{1}{N}\sum^N_{j=1}\rmd k^{j,N}_t, \ \ i=1,\ldots N,\\
&X^{i,N}\in C([0,T];[0,1]),\ k^i\in C([0,T];\mr) \text{ a.s.},\ \ i=1,\ldots N,\\
&\rmd|k^{i,N}|_t = 1_{X^{i,N}_t\in \{0,1\}}\rmd|k^{i,N}|_t,\ \ \rmd k^{i,N}_t = n(X^{i,N}_t)\rmd|k^{i,N}|_t,\ \ i=1,\ldots N.
\end{aligned}
\end{align}

Here $n$ is the outer normal vector of the domain $]0,1[$ and $|k^{i,N}|$ is the total variation process associated with $k^{i,N}$ (mind that it is not the modulus of $k^{i,N}$).
A solution is a couple $(X^{(N)},k^{(N)})= (X^{i,N},k^{i,N})_{i=1,\ldots N}$ of a $(\mc{F}_t)_t$-progressively measurable continuous semimartingale $X^{(N)}$ and a $(\mc{F}_t)_t$-progressively measurable $BV$ process $k^{(N)}$, satysfying the above system. Without loss of generality, we can assume $k^{(N)}_0=0$. We will often omit the second superscript $N$ (which denotes the number of particles) when not needed.

This system is the exact formulation of the interacting particle system \eqref{eq:particle_eq_intro}, with the term $\rmd K^N$ in \eqref{eq:particle_eq_intro} given by
\begin{align*}
\rmd K^N = \dot{q}(t)\rmd t +\frac{1}{N}\sum^N_{j=1}\mu(X^j_t) \rmd t -\frac{1}{N}\sigma\sum^N_{j=1}\rmd W^j +\frac{1}{N}\sum^N_{j=1}\rmd k^j.
\end{align*}
The term $-\rmd k^i$ represents the reflection at the boundary of $[0,1]$, while the term $\rmd K^N$ is independent of $i$ and ensures that the empirical average $N^{-1}\sum^N_{i=1}X^i_t$ stays equal to $q(t)$.

The system \eqref{SDE_single} can be interpreted as an SDE for $X$ on $[0,1]^N$ with oblique reflecting boundary conditions, where the direction of reflection keeps $X^{(N)}$ in the moving hyperplane $H_t=\{x\in\mr^N \mid \frac{1}{N}\sum^N_{i=1}x^i = q(t)\}$. Another interpretation of this system is as an SDE on $H_t \cap [0,1]^N$ with normal boundary condition, where the domain, in the frame of $H_t$, is a moving convex polygon and the normal reflection is in the frame of $H_t$. This interpretation is used in the proof of well-posedness of the system \eqref{SDE_single} (Proposition \ref{lem:wellpos_particle}).

We work under the following assumptions on $\mu$ and $q$ (and $X_0^{(N)}$):

\begin{condition}\label{assumptions_mu}
\begin{enumerate}
\item[i)] The function $-\mu$ is $C^2$ on $]0,1[$ and one-side Lipschitz-continuous, namely: there exists $c\ge0$ such that, for every $x$, $y$ in $]0,1[$,
\begin{align}
-(\mu(x)-\mu(y))(x-y)\le c|x-y|^2, \quad \forall x,y\in ]0,1[.\label{eq:mu_oneside_Lip}
\end{align}
\item[ii)] The function $\mu$ satisfies
\begin{align*}
\sup_{x\in ]0,1/2[}|x||\mu(x)| +\sup_{x\in ]1/2,1[}|1-x||\mu(x)| <+\infty.
\end{align*}
\item[iii)] There exists $0<\rho<1/2$ such that
\begin{align}
\text{sign}(x-1/2)\mu(x)\ge 0,\quad  \forall x\in ]0,\rho[\cup ]1-\rho,\rho[;\label{eq:mu_bd}
\end{align}
moreover $\mu(0)=\mu(1)=0$.
\end{enumerate}
\end{condition}

\begin{condition}\label{assumptions_q}
The map $q$ is a Lipschitz-continuous function of time (in particular $\dot{q}(t)$ exists for a.e. $t$) and there exists $0<\xi<1$ such that $\xi\le q(t)\le 1-\xi$ for every $t$.
\end{condition}

\begin{condition}\label{assumptions_x0N}
The (possibly random) initial datum $X_0^{(N)}$ is $\mc{F}_0$-measurable and verifies $0\le X_0^{i,N} \le 1$ for $i=1,\ldots N$ and $\frac{1}{N}\sum^N_{i=1} X^{i,N}_0 = q(0)$ $P$-a.s..
\end{condition}

The typical example we have in mind for $\mu$ is the derivative of a double-well potential, with logarithmic divergence at the boundary, and the typical example for $q$ is a piecewise linear continuous function which does not touch $0$ nor $1$. These examples are used in the battery model from \cite{GGMFD2018}.

While Condition \ref{assumptions_mu}-(i) on $\mu$ and Condition \ref{assumptions_q} on $q$ are structural assumptions of our model, Condition \ref{assumptions_mu}-(ii) seems not really necessary: if for example $\mu$ diverges like $1/x^\alpha$ for some $\alpha>1$, we would expect that the system does not even touch the boundary, hence classical McKean-Vlasov approach should apply, but for technical reasons our proof does not apply to this situation, see Remark \ref{rmk:tech_1}. Condition \ref{assumptions_mu}-(iii) is also technical and we expect that it can be removed without too much effort, see Remark \ref{rmk:tech_2}.

Actually we do not work directly with the system \eqref{SDE_single} but, to avoid possible singularity of $\mu$ at the boundary, we take a regularization $\mu^\eps$, $C^2$ on the closed domain $[0,1]$, with $\mu^\eps=\mu$ on $[\eps,1-\eps]$ and $|\mu^\eps|\le |\mu|$ on $]0,1[$ and verifying both the one-side Lipschitz condition \eqref{eq:mu_oneside_Lip} and the condition \eqref{eq:mu_bd} uniformly in $\eps$. We then consider the system:
\begin{align}
\begin{aligned}\label{SDE_single_reg}
&\rmd X^i_t = (-\mu^\eps(X^i_t)+\frac{1}{N}\sum^N_{j=1}\mu^\eps(X^j_t))\rmd t +\dot{q}(t)\rmd t +\sigma\rmd W^i_t -\frac{1}{N}\sum^N_{j=1}\sigma\rmd W^j_t -\rmd k^i_t +\frac{1}{N}\sum^N_{j=1} \rmd k^i_t , \ \ i=1,\ldots N,\\
&X^i\in C([0,T];[0,1]),\ k^i\in C([0,T];\mr) \text{ a.s.},\ \ i=1,\ldots N,\\
&\rmd|k^i|_t = 1_{X^i_t\in \{0,1\}}\rmd|k^i|_t,\ \ \rmd k^i_t = n(X^i_t)\rmd|k^i|_t,\ \ i=1,\ldots N.
\end{aligned}
\end{align}
When we want to stress the dependence on $N$ and $\eps$, we write $X^{i,N,\eps}$ and $X^{(N,\eps)}=(X^1,\ldots X^N)$ and similarly for $k^{i,N,\eps}$, $k^{(N,\eps)}$.

\begin{remark}
Here and in the following, when we talk about pathwise uniqueness, resp.\ uniqueness in law, we refer to pathwise uniqueness of $X^{(N,\eps)}$, resp. of the law of $X^{(N,\eps)}$. Uniqueness of $X^{(N,\eps)}$ implies in turn uniqueness of $k^{(N,\eps)}-\E k^{(N,\eps)}$, but we do not make any uniqueness statement on $k^{(N,\eps)}$ itself.
\end{remark}

\begin{proposition}\label{lem:wellpos_particle}
Assume Conditions \ref{assumptions_q} and \ref{assumptions_x0N} and assume that $\mu^\eps$ is Lipschitz-continuous on $[0,1]$. Then there exists a solution to the particle system \eqref{SDE_single_reg} and this solution is pathwise unique in $X^{(N,\eps)}$.
\end{proposition}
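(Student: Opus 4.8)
The plan is to exploit the second geometric reading of the system noted just after \eqref{SDE_single}: \eqref{SDE_single_reg} is, after a change of frame, a reflected SDE with \emph{normal} reflection in a bounded convex polytope that moves (Lipschitz-)continuously inside a fixed $(N-1)$-dimensional subspace; with $\mu^\eps$ Lipschitz this can then be solved by a Picard iteration built on the Lipschitz continuity of the deterministic Skorokhod map for that moving domain.

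Concretely, set $V=\{z\in\mr^N\mid\sum_{i=1}^N z^i=0\}$ with orthogonal projection $P:\mr^N\to V$, $(Pv)^i=v^i-\frac1N\sum_{j}v^j$, and put $Z_t=X^{(N,\eps)}_t-q(t)\mathbf 1$ with $\mathbf 1=(1,\dots,1)$. By Condition \ref{assumptions_x0N} we have $Z_0\in V$, and since summing the equations in \eqref{SDE_single_reg} gives $\sum_i X^{i}_t=Nq(t)$, the process $Z$ stays in $V$. Writing $b^\eps(x)^i=-\mu^\eps(x^i)$, one checks that \eqref{SDE_single_reg} is equivalent to
\begin{align*}
\rmd Z_t=P\bigl(b^\eps(Z_t+q(t)\mathbf 1)\bigr)\,\rmd t+\sigma\,P\,\rmd W_t-\rmd\eta_t,\qquad Z_t\in\hat D_t,
\end{align*}
where $\hat D_t:=\{z\in V\mid -q(t)\le z^i\le 1-q(t)\ \forall i\}$ is a bounded convex polytope in $V$, $W=(W^1,\dots,W^N)$, $\eta_t=\int_0^t P\,\rmd k^{(N,\eps)}_s$, and the reflection constraints now read: $\rmd|\eta|_t$ is supported on $\{t\mid Z_t\in\partial\hat D_t\}$ and $-\rmd\eta_t$ lies in the inward normal cone of $\hat D_t$ at $Z_t$ (taken inside $V$). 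Conversely, given such an $\eta$, the structure of the normal cone of the polytope $\hat D_t$ lets one write $-\rmd\eta_t$ as a nonnegative combination of the inward face-normals $\pm Pe_i$ over the active faces $\{X^i_t=0\}$, $\{X^i_t=1\}$, and a measurable selection of the coefficients recovers a genuine $k^{(N,\eps)}$ satisfying the conditions in \eqref{SDE_single_reg}. Two features make the analysis work: $z\mapsto P b^\eps(z+q(t)\mathbf 1)$ is globally Lipschitz on $V$ uniformly in $t$ because $\mu^\eps$ is Lipschitz; and $t\mapsto\hat D_t$ is Lipschitz in the Hausdorff metric with $\hat D_t$ uniformly ``fat'' ($0$ lies in $\mathrm{int}\,\hat D_t$ at distance $\ge\min(\xi,1-\xi)/\sqrt N$ from $\partial\hat D_t$, by Condition \ref{assumptions_q}).

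I would then invoke the deterministic Skorokhod problem in the moving convex domain $(\hat D_t)_t$: for every continuous $\psi:[0,T]\to V$ with $\psi_0\in\hat D_0$ there is a unique reflected pair $(\phi,\eta)=\Gamma(\psi)$, and $\Gamma$ is Lipschitz from $(C([0,T];V),\|\cdot\|_\infty)$ to itself — the moving-domain counterpart of the classical estimates for reflected diffusions in convex domains in \cite{Tan1979,LioSzn1984}, obtainable by the same arguments thanks to convexity, boundedness, uniform fatness and Lipschitz motion of $\hat D_t$. Granting this, the map
\begin{align*}
\Phi(Z)_\cdot=\Gamma\Bigl(Z_0+\int_0^\cdot P b^\eps(Z_s+q(s)\mathbf 1)\,\rmd s+\sigma\,P\,W_\cdot\Bigr)
\end{align*}
is, on $C([0,T'];V)$ with the sup-norm, a contraction for $T'$ small (contraction constant of order $\mathrm{Lip}(\Gamma)\cdot\mathrm{Lip}(\mu^\eps)\cdot T'$, hence uniform in the starting time), so it has a unique fixed point; the Picard iterates starting from $Z^{(0)}\equiv Z_0$ are $(\mc F_t)$-adapted and continuous and $\Gamma(\psi)_t$ depends only on $\psi|_{[0,t]}$, so the fixed point $Z$ is an $(\mc F_t)$-progressively measurable continuous process, and patching $[0,T']$-pieces gives $Z$ on all of $[0,T]$. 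Setting $X^{(N,\eps)}=Z+q\mathbf 1$ and choosing a compatible $k^{(N,\eps)}$ as above yields a solution of \eqref{SDE_single_reg}, and pathwise uniqueness of $X^{(N,\eps)}$ is exactly uniqueness of the fixed point of $\Phi$.

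The main obstacle is the deterministic Skorokhod problem itself in the time-dependent polytope $(\hat D_t)_t$ — in particular the Lipschitz bound on $\Gamma$ — because the domain genuinely depends on $t$ and its combinatorial type changes as $q(t)$ varies; adapting the classical convex-domain estimates of \cite{Tan1979,LioSzn1984} to this Lipschitz-in-time setting is where the real work lies, after which the rest is the standard fixed-point scheme.
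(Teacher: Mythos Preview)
Your geometric reduction --- writing the system as a normally-reflected SDE for $Z_t=X^{(N,\eps)}_t-q(t)\mathbf 1$ in the moving convex polytope $\hat D_t\subset V$ --- is exactly the paper's starting point. From there the two routes diverge.

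The paper does not build a Lipschitz Skorokhod map and run a contraction. Instead it invokes an off-the-shelf well-posedness result for reflected SDEs in time-dependent domains (Nystr\"om--\"Onskog, \cite{nystrom2015remarks}), and the substance of the proof is verifying that theorem's geometric hypothesis: a uniform inward-cone condition on the set of normal directions at boundary points, checked by an explicit computation with the face normals $\nu_{i,m}=A\gamma_{i,m}$ and the constraint $\xi\le q(t)\le 1-\xi$. Existence for \eqref{SDE_single_reg} then follows by transporting the solution back and decomposing $\rmd h$ along the $\nu_{i,m}$ via a measurable selection (your recovery of $k^{(N,\eps)}$ from $\eta$ matches this). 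Uniqueness the paper handles \emph{separately}, not through the moving-domain machinery at all: it simply replays the argument of Proposition \ref{prop:uniq_McKVla} with empirical averages in place of expectations, which is a few lines.

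Your alternative --- Lipschitz $\Gamma$ plus Picard --- is a legitimate strategy and would give existence and uniqueness in one stroke. The trade-off is that the step you correctly flag as the ``main obstacle'' is real work: $\hat D_t$ is not a translate of a fixed set in $V$ (the direction $\mathbf 1$ is orthogonal to $V$), so its shape and combinatorial type genuinely vary with $q(t)$, and a sup-norm Lipschitz bound on the Skorokhod map in that setting is not a one-line adaptation of \cite{Tan1979,LioSzn1984}. The paper sidesteps this by citing a theorem tailored to moving domains and then getting uniqueness essentially for free from the ``two projectors'' idea already central to the paper.
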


The basic idea of the proof is simple: namely the SDE above is an SDE on the moving domain $H_t \cap [0,1]^N$ with normal boundary conditions. However the proof is slightly technical and postponed to Appendix \ref{app:B}.


\begin{remark}
Similarly to \eqref{SDE_single}, any solution to \eqref{SDE_single_reg} satisfies $\frac{1}{N}\sum^N_{i=1} X^i_t = q(t)$ for every $t$.
\end{remark}


\subsection{The McKean-Vlasov SDE}

In the following, we consider again a probability space $(\Omega,\mc{A},P)$ and a Brownian motion $W$ on a filtration $(\mc{F}_t)_t$ (satisfying the standard assumption); $\E$ denotes the expectation with respect to $P$. The functions $\mu$, $q$ and $\sigma$ are as in the previous subsection.

We consider the McKean-Vlasov SDE
\begin{align}
\begin{aligned}\label{meanfield_SDE_single}
&\rmd \bar X_t = -\mu(\bar X_t)\rmd t +\sigma\rmd W_t +\rmd \bar K_t -\rmd \bar k_t,\\
&\int^T_0\E[|\mu(\bar X_r)|]\rmd r<\infty,\ \ \E\int^T_0\rmd |\bar k|_r<\infty,\ \ \rmd \bar K_t = (\E[\mu(\bar X_t)] +\dot{q}(t))\rmd t +\E[\rmd \bar k_t],\\
&\bar X\in C([0,T];[0,1]),\ \bar k\in C([0,T];\mr) \text{ a.s.},\\
&\rmd|\bar k|_t = 1_{\bar X_t\in \{0,1\}}\rmd|\bar k|_t,\ \ \rmd \bar k_t = n(\bar X_t)\rmd|\bar k|_t.
\end{aligned}
\end{align}
Here again $n$ is the outer normal vector of the domain $]0,1[$ and $|\bar k|$ is the total variation process associated with $\bar k$ (not the modulus of $\bar k$). A solution is a couple $(\bar X,\bar k)$ of a $(\mc{F}_t)_t$-progressively measurable continuous semimartingale $\bar X$ and a $(\mc{F}_t)_t$-progressively measurable $BV$ process $\bar k$, satisfying the above equation. Without loss of generality, we can assume $\bar k_0=0$. We sometimes say that $\bar X$ is a solution if there exists a process $\bar k$ such that $(\bar X,\bar k)$ is a solution.

The assumptions on $\mu$ and $q$ remain unchanged with respect to the particle system. In the assumption on $\bar X_0$, here the empirical average is replaced by the average with respect to $P$.

\begin{condition}\label{assumptions_x0}
The (possibly random) initial datum $\bar X_0$ is $\mc{F}_0$-measurable and verifies $0\le \bar X_0 \le 1$ and $\E \bar X_0 = q(0)$.
\end{condition}

\begin{remark}\label{rmk:averageX}
As for the particle system, it is easy to see that (under Condition \ref{assumptions_x0}) any solution to \eqref{meanfield_SDE_single} satisfies $\E \bar X_t = q(t)$ for every $t$.
\end{remark}

In view of the proof of convergence of the particle system, it is convenient to write \eqref{meanfield_SDE_single} in the following equivalent way:
\begin{align}
\begin{aligned}\label{eq:meanfield_SDE_equiv}
&\rmd \bar X_t = \dot{q}(t)\rmd t +\sigma\rmd W_t +\rmd \bar Z_t -\E \rmd \bar Z_t,\\
&\int^T_0\E[|\mu(\bar X_r)|]\rmd r<\infty,\ \ \E\int^T_0\rmd |\bar k|_r<\infty,\ \ \rmd \bar Z_t = -\mu(\bar X_t)\rmd t -\rmd \bar k_t,\\
&\bar X\in C([0,T];[0,1]),\ \bar k\in C([0,T];\mr) \text{ a.s.},\\
&\rmd|\bar k|_t = 1_{\bar X_t\in \{0,1\}}\rmd|\bar k|_t,\ \ \rmd \bar k_t = n(\bar X_t)\rmd|\bar k|_t.
\end{aligned}
\end{align}

\subsection{The main results}

Our main results are well-posedness of the McKean-Vlasov SDE \eqref{meanfield_SDE_single} and convergence of the particle system \eqref{SDE_single_reg} to the McKean-Vlasov SDE as $N\rightarrow \infty$ and $\eps\rightarrow 0$.

\begin{theorem}\label{thm:one}
Take a probability space $(\Omega,\mc{A},P)$, a Brownian motion $W$ on a filtration $(\mc{F}_t)_t$ (satistying the standard assumption) and an initial condition $\bar X_0$, assume Conditions \ref{assumptions_mu}, \ref{assumptions_q}, \ref{assumptions_x0}. Then there exists a unique solution $(\bar X,\bar k)$ to the McKean-Vlasov SDE \eqref{meanfield_SDE_single}.
\end{theorem}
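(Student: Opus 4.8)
I would split the statement into uniqueness and existence, treating uniqueness by a direct ``energy'' estimate built on the two projector properties highlighted in the introduction, and obtaining existence as a byproduct of the particle approximation (so existence is not proved by a fixed-point argument but is deferred to, or extracted from, the convergence result for \eqref{SDE_single_reg}).

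\textbf{Uniqueness.} Let $(\bar X,\bar k)$ and $(\bar Y,\bar h)$ be two solutions of \eqref{meanfield_SDE_single} built on the same Brownian motion $W$ and with the same initial datum $\bar X_0$, and write $\bar K$, $\bar L$ for the corresponding interaction terms. Since the noise $\sigma\rmd W$ cancels in $\rmd(\bar X-\bar Y)$, the difference $\bar X-\bar Y$ is a continuous $BV$ process, and the elementary product rule (no quadratic variation term) gives
\begin{align*}
\frac{1}{2}\rmd|\bar X-\bar Y|^2 = -(\bar X-\bar Y)(\mu(\bar X)-\mu(\bar Y))\rmd t + (\bar X-\bar Y)\rmd(\bar K-\bar L) - (\bar X-\bar Y)\rmd(\bar k-\bar h).
\end{align*}
The first term is bounded by $c|\bar X-\bar Y|^2\rmd t$ by the one-sided Lipschitz condition \eqref{eq:mu_oneside_Lip}. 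For the reflection term one uses that $\rmd\bar k=n(\bar X)\rmd|\bar k|$ is supported on $\{\bar X\in\{0,1\}\}$ and points inward: since $\bar Y\in[0,1]$ one checks case by case that $-(\bar X-\bar Y)\rmd\bar k\le0$ and, symmetrically, $(\bar X-\bar Y)\rmd\bar h\le0$; this is precisely the statement that $\rmd\bar k$ acts as a projector onto $C([0,T];[0,1])$. The decisive point is the interaction term: $\bar K-\bar L=\E[\mu(\bar X)-\mu(\bar Y)]\rmd t+\E[\rmd\bar k-\rmd\bar h]$ is a \emph{deterministic}, continuous $BV$ process in time, so taking expectations it factors as $\E[\bar X_t-\bar Y_t]\,\rmd(\bar K-\bar L)_t$, which vanishes because $\E\bar X_t=q(t)=\E\bar Y_t$ by Remark \ref{rmk:averageX}; this is the projector property of $\rmd\bar K$ in $L^2(\Omega)$. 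Hence $\rmd\,\E|\bar X-\bar Y|^2\le 2c\,\E|\bar X-\bar Y|^2\rmd t$, and Gr\"onwall together with $\E|\bar X_0-\bar Y_0|^2=0$ forces $\bar X=\bar Y$ a.s. (and then $\bar k-\E[\bar k]=\bar h-\E[\bar h]$).

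\textbf{Existence.} I would not try a contraction argument, since $\E[\rmd\bar k]$ is only continuous and $BV$ (not Lipschitz) and has no fixed sign, so the standard McKean-Vlasov fixed point does not close. Instead I would construct a solution as a limit point of the regularized particle systems \eqref{SDE_single_reg}: first establish moment, $BV$ and H\"older-type bounds on the $X^{i,N,\eps}$ and on the reflection terms $k^{i,N,\eps}$ that are uniform in $N$ and $\eps$ (using the constraint $N^{-1}\sum_i X^i=q$, the structure of the oblique reflection, and Conditions \ref{assumptions_mu}(ii)--(iii) near the boundary); then deduce tightness of the laws of (a tagged particle, its reflection term, the empirical measure on path space); finally pass to a subsequential limit and verify, via exchangeability and a law of large numbers for the empirical average, that the limit solves \eqref{meanfield_SDE_single}, with $N^{-1}\sum_j\mu^\eps(X^j)\to\E[\mu(\bar X)]$ and $N^{-1}\sum_j\rmd k^j\to\E[\rmd\bar k]$. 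No circularity arises (uniqueness is not used to get a limit point). Concretely, in the write-up this existence part would be absorbed into the proof of Theorem \ref{thm:convergence} and cited from there. I expect the main obstacle to be the uniform $BV$ and H\"older bounds on the reflection terms: because the direction of reflection for $X^{(N)}$ is oblique and depends on the empirical measure, classical Skorokhod-map estimates do not apply directly, and the possible boundary singularity of $\mu$ is what forces the $\eps$-regularization and a careful use of the sign condition \eqref{eq:mu_bd}. The uniqueness half, by contrast, is short once the two projector identities are isolated.
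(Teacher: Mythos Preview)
Your proposal is correct and follows the paper's own route: uniqueness by the two projector identities plus Gr\"onwall (this is exactly Proposition \ref{prop:uniq_McKVla}), and existence extracted from the particle approximation via uniform $BV$/H\"older bounds, tightness of the empirical measures, and identification of limit points. One point you gloss over that the paper makes explicit: the tightness argument only produces a \emph{generalized} (possibly non-adapted, on a canonical space) solution, and a Yamada--Watanabe step (Propositions \ref{YamWat_uniq}--\ref{YamWat_exist}, using the uniqueness argument extended to generalized solutions) is then needed to upgrade this to a strong solution on the \emph{given} filtered space $(\Omega,\mc{A},(\mc{F}_t)_t,P,W)$; so uniqueness does enter the existence proof, though not circularly.
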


Let $\bar X$ be the solution to the McKean-Vlasov SDE \eqref{meanfield_SDE_single} with initial datum $\bar X_0$ and, for $\eps>0$, $N$ in $\mathbb{N}$, let $(X^{1,N,\eps},\ldots X^{N,N,\eps})$ be the solution to the particle system \eqref{SDE_single_reg} with initial datum $(X_0^1,\ldots X_0^N)$. For $E$ Polish space and $Y^i$ $E$-valued random variables, we consider the empirical measures $\frac{1}{N}\sum^N_{i=1} Y^i$ as $\mc{P}(E)$-valued random variable, where $\mc{P}(E)$ is the space of probability measures on $E$, endowed with the Borel $\sigma$-algebra with respect to the weak convergence (convergence against $C_b(E)$ test functions).

\begin{theorem}\label{thm:convergence}
Assume Conditions \ref{assumptions_mu}, \ref{assumptions_q}, Condition \ref{assumptions_x0} on $\bar X_0$ and Condition \ref{assumptions_x0N} on $(X_0^1,\ldots X_0^N)$. Assume also that $\frac{1}{N}\sum^N_{i=1}\delta_{X^{i,N}_0}$ converges in probability to $\text{Law}(\bar X_0)$ as $N\rightarrow\infty$. Then the sequence of empirical measures $\frac{1}{N}\sum^N_{i=1} \delta_{X^{i,N,\eps}}$ on $\mc{P}(C([0,T]))$ converges in probability to $\text{Law}(\bar X)$, as $\eps\rightarrow 0$ and $N\rightarrow\infty$.
\end{theorem}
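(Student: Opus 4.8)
\emph{Overall strategy.} The plan is to argue by compactness. First I would prove a priori estimates on the solutions of \eqref{SDE_single_reg}, uniform in $N$ and $\eps$, that yield tightness of the empirical measures $L^{N,\eps}:=\frac{1}{N}\sum_{i=1}^{N}\delta_{X^{i,N,\eps}}$ regarded as random elements of $\mc{P}(C([0,T];[0,1]))$. Then I would show that, along any sequence $N\to\infty$, $\eps\to0$, every subsequential limit of $\mathrm{Law}(L^{N,\eps})$ is (a.s.) carried by laws of solutions of the McKean--Vlasov SDE \eqref{meanfield_SDE_single}; by the uniqueness part of Theorem \ref{thm:one} this forces the limit to be the Dirac mass $\delta_{\mathrm{Law}(\bar X)}$, so that $L^{N,\eps}\to\mathrm{Law}(\bar X)$ in probability. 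Existence for \eqref{meanfield_SDE_single} falls out of the same argument (yielding the existence half of Theorem \ref{thm:one}). By exchangeability it suffices throughout to estimate one tagged particle $X^{1}=X^{1,N,\eps}$ with reflection term $k^{1}$; we write $X^{1}_{t}=X^{1}_{0}+A^{1}_{t}+M^{1}_{t}-k^{1}_{t}+\frac{1}{N}\sum_{j}k^{j}_{t}$, where $M^{1}$ is a continuous martingale with $\langle M^{1}\rangle_{t}=\sigma^{2}(1-\tfrac{1}{N})t$ and $A^{1}_{t}=\int_{0}^{t}\big(-\mu^{\eps}(X^{1}_{r})+\tfrac{1}{N}\sum_{j}\mu^{\eps}(X^{j}_{r})\big)\rmd r+q(t)-q(0)$.

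\emph{Uniform estimates.} I would establish, uniformly in $N,\eps$: (a) $\E\int_{0}^{T}|\mu^{\eps}(X^{1}_{r})|^{2}\rmd r\le C$; (b) $\E[|k^{1}|_{T}]\le C$; (c) $\lim_{\delta\to0}\sup_{N,\eps}\E\big[\sup_{|t-s|\le\delta}|X^{1}_{t}-X^{1}_{s}|\big]=0$. For (a) I would apply It\^o's formula to $V^{\eps}(X^{1})$ with $(V^{\eps})'=\mu^{\eps}$: the boundary contribution $\int_{0}^{T}\mu^{\eps}(X^{1}_{r})\rmd k^{1}_{r}$ vanishes since $\mu(0)=\mu(1)=0$ (Condition \ref{assumptions_mu}(iii)) and $\rmd k^{1}$ is carried by $\{X^{1}\in\{0,1\}\}$; coercivity comes from $\mu^{\eps}(X^{1})\big(-\mu^{\eps}(X^{1})\big)=-|\mu^{\eps}(X^{1})|^{2}\le0$; and the remaining terms (the interaction $\tfrac{1}{N}\sum_{j}\mu^{\eps}(X^{j})$, the $q$-drift, the It\^o correction, and the non-local reflection) are controlled using Condition \ref{assumptions_mu}(ii)--(iii), the fact that $\mu^{\eps}$ and $\mu$ have the same sign, and exchangeability. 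For (b) I would apply It\^o's formula to $\frac{1}{N}\sum_{i=1}^{N}(X^{i}-q)^{2}$ and sum over all particles: the non-local term $\tfrac{1}{N}\sum_{j}\rmd k^{j}$ drops out because $\sum_{i}(X^{i}-q)\equiv0$, the martingale part has zero expectation, and $\xi\le q\le1-\xi$ gives $\int_{0}^{T}(X^{1}_{r}-q(r))\rmd k^{1}_{r}\ge\xi|k^{1}|_{T}$; hence $2\xi\,\E[|k^{1}|_{T}]\le\E[(X^{1}_{0}-q(0))^{2}]+2\,\E\int_{0}^{T}|X^{1}_{r}-q(r)|\,|\mathrm{drift}^{1}_{r}|\rmd r+\sigma^{2}T$, which is finite by (a). Estimate (c) is the hard one, see below.

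\emph{Tightness, identification, conclusion.} By (c) and $X^{1}\in[0,1]$, the laws of $X^{1}$ are tight in $C([0,T])$; by exchangeability and the classical criterion (Sznitman) this gives tightness of $\mathrm{Law}(L^{N,\eps})$ in $\mc{P}(\mc{P}(C([0,T])))$, and (b)--(c) together with Doob/BDG give tightness of the reflection and martingale data. Passing to a subsequential limit on a common probability space (Skorokhod representation), I would identify the limiting tagged particle $\bar X$ (whose law is the limiting empirical measure) as a solution of \eqref{meanfield_SDE_single}: the martingale part converges to $\sigma W$ for a limiting Brownian motion; $\tfrac{1}{N}\sum_{j}\mu^{\eps}(X^{j})\to\E[\mu(\bar X)]$ and $\mu^{\eps}(X^{1})\to\mu(\bar X)$ by the law of large numbers for the empirical measure (uniform integrability from (a)), together with $\mu^{\eps}\to\mu$ on $(0,1)$ and the fact that $\bar X$ spends zero time on $\partial[0,1]$; the reflection terms converge, the limit $\bar k$ remaining carried by $\{\bar X\in\{0,1\}\}$ by passing to the limit in the variational characterisation $\int_{0}^{t}(X^{1}_{r}-y)\rmd k^{1}_{r}\le0$ for all $y\in[0,1]$ (weak convergence plus lower semicontinuity of total variation); $\tfrac{1}{N}\sum_{j}k^{j}\to\E[\bar k]$ by the same law of large numbers; and $\tfrac{1}{N}\sum_{i}X^{i}_{t}=q(t)$ passes to $\E\bar X_{t}=q(t)$. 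Thus every subsequential limit is a solution of \eqref{meanfield_SDE_single}, which is unique by Theorem \ref{thm:one}, so it equals the deterministic $\mathrm{Law}(\bar X)$; convergence in law of $L^{N,\eps}$ to a constant is the same as convergence in probability.

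\emph{Main obstacle.} The crux is estimate (c): the uniform modulus-of-continuity bound for the reflection term $k^{1}$. The difficulty is that the non-local term $\tfrac{1}{N}\sum_{j}\rmd k^{j}$ enters the dynamics of each particle, so the one-dimensional Skorokhod problem solved by $X^{1}$ has an input which itself involves all the $k^{j}$; the classical control of a reflection term by the oscillation of its input then produces a self-referential inequality with constant $\ge1$, which a naive averaging over $i$ cannot close. Resolving this needs the specific structure of the interaction --- in particular the constraint $\tfrac{1}{N}\sum_{i}X^{i}=q$ with $\xi\le q\le1-\xi$ --- to prevent the non-local reflection from concentrating on short time intervals; this is the model-specific ingredient. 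A secondary technical point, again caused by the boundary singularity of $\mu$, is the passage to the limit in the drift and in the reflection terms.
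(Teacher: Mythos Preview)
Your compactness strategy is exactly the paper's: uniform estimates $\Rightarrow$ tightness $\Rightarrow$ identify limits as McKean--Vlasov solutions $\Rightarrow$ conclude by uniqueness. Estimate (b), via It\^o's formula on $\frac{1}{N}\sum_i(X^i-q)^2$ and the cancellation $\sum_i(X^i-q)\equiv0$ to kill the non-local terms, is precisely the paper's argument. A few comments on the remaining pieces.

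\emph{On (a).} The paper does not prove an $L^2$-in-time bound on $\mu^\eps(X^1)$. Your route via $V^\eps$ with $(V^\eps)'=\mu^\eps$ needs the It\^o correction $\frac{\sigma^2}{2}\int(\mu^\eps)'(X^1)\,\rmd t$ bounded \emph{above} uniformly in $\eps$; the one-sided Lipschitz condition only gives $(\mu^\eps)'\ge -c$, i.e.\ a lower bound, and the cross term $\mu^\eps(X^1)\tfrac1N\sum_j\rmd k^j$ has no obvious sign. The paper gets by with the $BV$ bounds $\sup_{N,\eps,i}\E\big(\int_0^T|\mu^\eps(X^i)|\,\rmd r\big)^p<\infty$, obtained from the \emph{same} It\^o-on-$(X^i-q)^2$ computation as (b): first on the average, then feeding the averaged bound back into the single-particle estimate.

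\emph{On (c), the main obstacle.} The paper's device is to fix $s$, compute $\rmd|X^i_t-q(t)-X^i_s+q(s)|^2$, and again average over $i$ so that all three interaction terms $\tfrac1N\sum_j\mu^\eps(X^j)\,\rmd t$, $\tfrac1N\sum_j\rmd W^j$, $\tfrac1N\sum_j\rmd k^j$ vanish (since $\sum_i(X^i_t-q(t)-X^i_s+q(s))=0$). This gives a H\"older bound on the averaged quantity, which is then re-inserted into the single-$i$ computation to control the interaction there. The model-specific ingredient you were missing is the bound
\[
\E\Big(\int_s^t\big[(X^i_r-q(r)-X^i_s+q(s))\,\mu^\eps(X^i_r)\big]^-\rmd r\Big)^p\le C\,|t-s|^p,
\]
and this is exactly where Condition~\ref{assumptions_mu}(ii) ($|x\mu(x)|$ bounded near $0$, $|1-x||\mu(x)|$ near $1$) enters: when $X^i_r$ is near the boundary and on the ``wrong side'' of $X^i_s$, the factor $|X^i_r-X^i_s|$ is dominated by the distance of $X^i_r$ to the boundary, which cancels the divergence of $\mu$.

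\emph{On the identification.} Two subtleties. First, the paper runs tightness on an enlarged space, tracking the empirical measure of $(W^i,X^i,Z^i)$ with $Z^i=-\int_0^\cdot\mu^\eps(X^i)\,\rmd r-k^i$; this makes the limiting reflection term directly accessible and avoids a separate Skorokhod-problem stability argument. Second, and more important, the limit point is a priori only a \emph{generalized} (possibly non-adapted) solution under the limiting random measure $L$: the paper first proves uniqueness in this wider class, then invokes a Yamada--Watanabe argument to upgrade to a strong solution. Your appeal to ``$\bar X$ spends zero time on $\partial[0,1]$'' is unproven and is not needed: the paper uses $\mu(0)=\mu(1)=0$ from Condition~\ref{assumptions_mu}(iii) so that $\int\mu(X_r)1_{X_r\in\{0,1\}}\rmd r=0$ trivially, and identifies $k$ by passing to the limit in closed sets of the form $\{\int h\varphi(X)\,\rmd Z=-\int h\varphi(X)\mu(X)\,\rmd r,\ \int g\,\tilde n(X)\,\rmd Z\le 0\}$, which are stable under weak convergence.
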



Note that the convergence result of the particle system \eqref{SDE_single_reg} holds as $N\rightarrow\infty$, $\eps\rightarrow 0$ with no further restriction. In particular, one could send first $\eps$ to $0$ and then $N$ to $\infty$ to show the convergence of the original particle system \eqref{SDE_single}.

\begin{remark}
	\label{rmk: initial conditions}
The assumptions on the initial conditions may sound a bit rigid, in particular they cannot be satisfied taking $(X_0^1,\ldots X_0^N)$ i.i.d. copies of $\bar{X}_0$ (the empirical average is not $q(0)$ for a.e. $\omega$). However:
\begin{itemize}
\item An easy example of $(X_0^1,\ldots X_0^N)$ satisfying this constraint is given by taking $Y^i$ i.i.d. copies of a variable $\bar{X}_0$ with mean $q(0)$ and $X^i_0 = Y^i - \frac{1}{N}\sum^N_{j=1}Y^j +q(0)$: by the law of large number $\frac{1}{N}\sum^N_{j=1}Y^j$ tends to $q(0)=\E \bar{X}_0$ and so the empirical measure of $X^i_0$ tends to the law of $\bar{X}_0$  in probability.
\item The assumptions can easily be relaxed allowing $q(0)=q^N(0)$ to be random and dependent on $N$, but keeping deterministic increments $q^N(t)-q^N(0)$, with $q^N(0)$ tending to $q(0)$ in probability as $N\rightarrow \infty$. This allows to include the case of $(X_0^1,\ldots X_0^N)$ i.i.d. copies of $\bar{X}_0$.
\end{itemize}
\end{remark}

Finally, we give another convergence result and exhibit a rate of convergence for the time marginals.
We denote by $\mathcal{W}_{2,[0,1]}$ the $2$-Wasserstein distance on $[0,1]$.

\begin{proposition}
	\label{pro: convergence rate}
	Assume that $\mu$ is $C^2$ on $[0,1]$ so that we can take $\mu^{\eps} = \mu$ for every $\eps>0$. Assume the conditions of Theorem \ref{thm:convergence} and assume also that $X^{i,N}_0 = Y^i + \sum_{j=1}^N Y^j + q(0)$, where $(Y^i)_{i\in \mathbb{N}}$ is a sequence of independent and identically distributed random variables with law $\text{Law}(\bar{X}_0)$ (see Remark \ref{rmk: initial conditions}).
	Then we have the following rate of convergence:
	\begin{equation*}
	\mathbb{E} \left[\sup_{t\in[0,T]}\mathcal{W}_{2,[0,1]} ( \text{Law} ( \bar{X}_t), \frac1N \sum_{ i = 1 }^{ N } \delta_{X ^ { i , N} _t} ) \right]  
	= O(1/\sqrt{\log(N)})
	\qquad
	\mbox{as } N \to \infty.
	\end{equation*}
\end{proposition}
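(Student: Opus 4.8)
The plan is to run the pathwise (Tanaka‑type) argument sketched in the introduction, taking for granted the well‑posedness of \eqref{meanfield_SDE_single} (Theorem~\ref{thm:one}) and the uniform $BV$ bounds for the particle system that enter the proof of Theorem~\ref{thm:convergence}. Concretely, on the probability space carrying the Brownian motions $W^i$ and the i.i.d.\ variables $Y^i$, let $\bar K$ be the (\emph{deterministic}, since built from expectations) interaction process of the unique McKean--Vlasov solution $\bar X$, and for each $i$ let $\hat X^i$ solve the reflected SDE $\rmd\hat X^i = -\mu(\hat X^i)\rmd t + \sigma\rmd W^i + \rmd\bar K - \rmd\hat k^i$ on $[0,1]$ with $\hat X^i_0 = Y^i$. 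Since $\mu$ is Lipschitz on $[0,1]$, this is a classical reflected SDE whose strong solution is a sup‑norm Lipschitz functional $\Phi$ of $(W^i,Y^i)$; hence the $\hat X^i$ are i.i.d.\ copies of $\bar X$, and by the triangle inequality it suffices to bound, in $\E\sup_{t\le T}$, the two quantities
\begin{equation*}
\mathcal{W}_{2,[0,1]}\Big(\tfrac1N\sum_i\delta_{X^{i,N}_t},\tfrac1N\sum_i\delta_{\hat X^i_t}\Big)\qquad\text{and}\qquad\mathcal{W}_{2,[0,1]}\Big(\tfrac1N\sum_i\delta_{\hat X^i_t},\text{Law}(\bar X_t)\Big).
\end{equation*}

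For the first term I would use a synchronous coupling energy estimate. Set $D^i := X^{i,N} - \hat X^i$; its only martingale part is the common term $-\tfrac{\sigma}{N}\sum_j W^j$, so $\tilde D^i := D^i + \tfrac{\sigma}{N}\sum_j W^j$ has bounded variation and $\tfrac1N\sum_i(\tilde D^i_t)^2$ can be differentiated with no It\^o correction and no stochastic integral (which is what makes the $\E\sup_t$ bound painless). In the resulting identity the drift difference contributes $\le C\,\tfrac1N\sum_i(\tilde D^i_t)^2$ by the Lipschitz bound on $\mu$ (here the $C^2([0,1])$ hypothesis is used) modulo lower‑order terms from the bounded‑variation shift, and the individual reflection terms $-(\rmd k^{i,N}-\rmd\hat k^i)$ are dissipative in the usual Skorokhod sense. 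The nonlocal terms — the common drift $\tfrac1N\sum_j\mu(X^{j,N})-\E\mu(\bar X)$, the common reflection $\tfrac1N\sum_j\rmd k^{j,N}-\E\rmd\bar k$, and $\tfrac{\sigma}{N}\sum_j\rmd W^j$ — all appear multiplied by $\tfrac1N\sum_i\tilde D^i_t=\big(q(t)-\tfrac1N\sum_i\hat X^i_t\big)+\tfrac{\sigma}{N}\sum_j W^j_t$, which is \emph{small}: $\tfrac1N\sum_i X^{i,N}_t=q(t)$ exactly, while $\tfrac1N\sum_i\hat X^i_t\to\E\bar X_t=q(t)$ by the law of large numbers, so $\E\sup_t|\tfrac1N\sum_i\tilde D^i_t|^2=O(N^{-1}\log N)$. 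Controlling the total variation of the nonlocal reflection term by the uniform $BV$ bounds, this whole contribution is polynomially small (order $N^{-1/2}$ up to logarithmic factors). Together with $\E\,\tfrac1N\sum_i(\tilde D^i_0)^2=O(1/N)$ and Gr\"onwall, one gets $\E\sup_{t\le T}\tfrac1N\sum_i(X^{i,N}_t-\hat X^i_t)^2 = o(1/\log N)$, so the first term above is $o(1/\sqrt{\log N})$.

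For the second term, since $\bar X=\Phi(W,\bar X_0)$ with $\Phi$ Lipschitz one has $\tfrac1N\sum_i\delta_{\hat X^i}=\Phi_*\big(\tfrac1N\sum_i\delta_{(W^i,Y^i)}\big)$ and $\text{Law}(\bar X)=\Phi_*(\gamma\otimes\text{Law}(\bar X_0))$, with $\gamma$ the Wiener measure on $C([0,T];\mr)$; pushing forward, using a coupling respecting the $(W^i,Y^i)$‑pairing and the standard rate for empirical approximation in $2$‑Wasserstein on a bounded interval for the $[0,1]$‑marginal, one obtains
\begin{equation*}
\E\,\mathcal{W}_{2,C([0,T])}\Big(\tfrac1N\sum_i\delta_{\hat X^i},\text{Law}(\bar X)\Big)\le C\,\E\,\mathcal{W}_{2,C([0,T])}\Big(\tfrac1N\sum_i\delta_{W^i},\gamma\Big)+O(N^{-1/4}),
\end{equation*}
where $\mathcal{W}_{2,C([0,T])}$ is the $2$‑Wasserstein distance for the sup‑metric on path space. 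The bound $\E\,\mathcal{W}_{2,C([0,T])}(\tfrac1N\sum_i\delta_{W^i},\gamma)=O(1/\sqrt{\log N})$ is the known — and essentially sharp — rate of empirical approximation of a Gaussian measure on path space; see e.g.\ \cite{CDFM2020}. Finally, since $w\mapsto w_t$ is $1$‑Lipschitz from $C([0,T];[0,1])$ to $[0,1]$, for any two random measures on path space one has pathwise $\sup_{t\le T}\mathcal{W}_{2,[0,1]}(\cdot_t,\cdot_t)\le\mathcal{W}_{2,C([0,T])}(\cdot,\cdot)$, so the path‑space bound transfers to the required $\E\sup_t$ bound on $\mathcal{W}_{2,[0,1]}$, and adding the two estimates gives the claim.

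The main obstacle is the control of the nonlocal reflection term $\tfrac1N\sum_j\rmd k^{j,N}$ in the energy estimate: it is only continuous and of bounded variation in time, with no sign, so it cannot be absorbed by a Lipschitz/Gr\"onwall argument on its own. What rescues the estimate is precisely that it is paired against $\tfrac1N\sum_i\tilde D^i_t$ — the manifestation here of the fact that $\rmd\bar K$ acts as the $L^2(\Omega)$‑projection onto processes of average $q(t)$ — together with the a priori uniform $BV$ bounds; getting these two ingredients to mesh is the delicate point. The logarithmic rate itself is not improved here: it enters solely through the cited empirical‑measure rate for Brownian motion on $C([0,T])$.
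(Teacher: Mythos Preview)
Your strategy is sound and reaches the stated rate, but it is organised differently from the paper. The paper does not introduce the i.i.d.\ auxiliary copies $\hat X^i$ nor the Lipschitz solution map $\Phi$. Instead, for each fixed $\omega$ it builds directly a coupling $P^\omega$ of $\gamma\otimes\text{Law}(\bar X_0)$ with the empirical input measure $\frac1N\sum_i\delta_{(W^i(\omega),X^{i,N}_0(\omega))}$, regards the particle system as a solution of the \emph{same} McKean--Vlasov equation under the second marginal, and runs a single energy estimate for the shifted quantity $\bar X-\tilde X-(\Pi^1-\Pi^2)+\E_{P^\omega}[\Pi^1-\Pi^2]+\int(b(\bar X)-b(\tilde X))\,\rmd s$ (Lemma~\ref{lem: continuity in wasserstein 2}); the reflection cross terms are then bounded by $\sup|\Pi^1-\Pi^2|$ times the BV norms, and choosing $P^\omega$ optimal produces $\mathcal W_{2,C_t}(\gamma,\frac1N\sum\delta_{W^i})$ directly on the right--hand side. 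Your route is the classical Sznitman decomposition: the synchronous--coupling first term gives a polynomial rate (better than needed), and the Lipschitz property of the Skorokhod solution map transports the second term back to the same path--space empirical rate. What you gain is modularity and a sharper bound on the ``particle vs.\ i.i.d.'' part; what the paper's direct coupling buys is that it avoids the Lipschitz bound on the two--sided Skorokhod map and the uniform--in--$t$ law of large numbers $\E\sup_t\big|\tfrac1N\sum_i\hat X^i_t-q(t)\big|^2=O(N^{-1}\log N)$, which you invoke but do not justify (it is true, via a chaining argument using the H\"older bounds on $\bar X$, but it is not as immediate as you suggest). Both approaches hinge on the same two ingredients you correctly isolate: the projection cancellation $\E[(\bar X-\tilde X)\,\rmd\bar K]=0$ and the uniform BV bounds on the reflection, and both inherit the $O(1/\sqrt{\log N})$ rate from the empirical approximation of Wiener measure in $\mathcal W_{2,C([0,T])}$.
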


\section{The proof}\label{sec:proof}

\subsection{The strategy}

The strategy of the proof is as follows:
\begin{itemize}
\item We first prove uniqueness for the McKean-Vlasov SDE. For later use, we prove uniqueness among a larger class of solutions, namely possibly non-adapted processes. We also give a stability result with respect to the drift $\mu$.
\item For convergence of the particle system and existence of the McKean-Vlasov SDE, we prove uniform (in $N$ and $\eps$) $BV$ and H\"older estimates for $k^{i,N,\eps}$ and uniform H\"older estimates for $X^{i,N,\eps}$. These estimates in turn imply tightness for the empirical measures $\frac{1}{N}\sum^N_{i=1} \delta_{X^{i,N,\eps}}$ and more generally for $\frac{1}{N}\sum^N_{i=1}\delta_{(W^i,X^{i,N,\eps},-\int^\cdot_0\mu^\eps(X^{i,N,\eps}_r)\rmd r-k^{i,N,\eps})}$.
\item We then prove that any limit point of the empirical measures $\frac{1}{N}\sum^N_{i=1} \delta_{X^{i,N,\eps}}$ is the law of a (possibly non-adapted) solution to the McKean-Vlasov SDE. Uniqueness of the McKean-Vlasov SDE implies that the whole sequence of empirical measure converges to the law of the unique solution and that this solution is actually adapted.
\item Finally, we prove the rate of convergence using a pathwise approach. We first show that particle system \eqref{SDE_single_reg} can be interpreted as the McKean-Vlasov equation with a different measure on the inputs. The core of the proof is then a stability result of the McKean-Vlasov equation with respect to the inputs.
\end{itemize}

In the following subsections, we will use the letter $C$ to denote a positive constant, whose value may change from line to line; we will sometimes use $C_p$ to stress the dependence on $p$.

\begin{remark}\label{rmk:penalization}
	Here we comment about a possible alternative strategy, taken from \cite{BriEliHu2018,BriChaGuiLab2020}. One could try to apply the penalization method used in those works to equation \eqref{eq:McKVla_intro}, where in this case the penalized equation is a reflected equation.
	Reducing the problem to its bare bones and in order to make it as similar as possible to \cite{BriEliHu2018,BriChaGuiLab2020}, we can look at the following equation
	\begin{align}
		\begin{aligned}\label{eq:to_penalize_intro}
			&\rmd \bar X_t = \rmd W_t + \rmd \bar K_t -\rmd \bar k_t,\\
			&\bar X_t\le 1,\quad \rmd \bar k_t = n(\bar X_t)\rmd |\bar k|_t,\quad \rmd |\bar k|_t = 1_{\bar X_t=1}\rmd |\bar k|_t,\\
			&\E \bar X_t \geq \frac{1}{2},\quad \bar K \text{ deterministic}.
		\end{aligned}
	\end{align}
	The penalized version of equation \eqref{eq:to_penalize_intro} is the following
	\begin{align}
		\begin{aligned}\label{eq:penalized_intro}
			&\rmd \bar X^n_t = \rmd W_t + \rmd \bar K^n_t -\rmd \bar k^n_t,\\
			&\bar X_t\le 1,\quad \rmd \bar k_t = n(\bar X_t)\rmd |\bar k|_t,\quad \rmd |\bar k|_t = 1_{\bar X_t=1}\rmd |\bar k|_t,\\
			&\bar K^n_t := \int_{0}^{t}\varphi_n(\mathbb{E}[\bar X_s^n]-\frac{1}{2}) \rmd s
		\end{aligned}
	\end{align}
	where $\varphi_n(x) = r1_{x\leq -\frac{1}{n}} -nrx1_{-\frac{1}{n} < x \leq 0}$ and $r > 0$ is to be choose accordingly. Equation \eqref{eq:penalized_intro} is well posed, for every $n$ because of \cite{Szn1984}.
	
	The goal is now to construct a solution to equation \eqref{eq:to_penalize_intro} as a limit, for $n\to\infty$, of a sequence of solutions $\bar X^n$ to \eqref{eq:penalized_intro}. When proving that $\bar X^n$ is a Cauchy sequence in $L^2$ one gets
	\begin{equation*}
		\mathbb{E}[|\bar X^n_t - \bar X^m_t|^2] 
		\leq - 2 \int_{0}^{t} \mathbb{E}[\bar X_s^n] \varphi_m(\mathbb{E}[\bar X_s^m]-\frac{1}{2}) ds
		- 2\int_{0}^{t} \mathbb{E}[\bar X_s^m] \varphi_n(\mathbb{E}[\bar X_s^n]-\frac{1}{2}) ds.
	\end{equation*}
	Since $\varphi$ is bounded and non-negative, one could conclude by proving that $\mathbb{E}[\bar X_s^n] -\frac12 \geq  -\frac{c}{n}$ for some constant $c> 0$. By taking the expectation in equation \eqref{eq:penalized_intro} we get
	\begin{equation*}
		\mathbb{E}[\bar X^n_t] = \mathbb{E}[\bar X^n_s] + \int_{s}^{t}\varphi_n(\mathbb{E}[\bar X_u^n]-\frac{1}{2}) \rmd u - \mathbb{E}[\bar k^n_{t}-\bar k^n_{s}].
	\end{equation*}
	At this point we meet an additional difficulty with respect to \cite{BriCarChaHu2020}: in order to conclude as in the argument in \cite{BriCarChaHu2020}, one would need $\mathbb{E}[k_{\cdot}]$ to be Lipschitz function of time; however $k_{\cdot}$ is in general only of bounded variation.
	
	Maybe one could try to penalize both reflection terms. However, this is behind the scope of the present paper.
\end{remark}

\subsection{Uniqueness and stability}

In this Subsection we establish uniqueness and stability results for the McKean-Vlasov SDE \eqref{meanfield_SDE_single}. The following result proves the uniqueness part of Theorem \ref{thm:one}.

\begin{proposition}\label{prop:uniq_McKVla}
Assume Condition \ref{assumptions_mu}-(i) on $\mu$  and that $q$ is measurable bounded (Conditions \ref{assumptions_mu} and \ref{assumptions_q} in particular are enough). Assume also Condition \ref{assumptions_x0} on $\bar X_0$. Strong uniqueness holds for the McKean-Vlasov SDE \eqref{meanfield_SDE_single}. Moreover, if $\bar X$ and $\bar Y$ are two solution to \eqref{meanfield_SDE_single} starting from $\bar X_0$, $\bar Y_0$, with $\E[\bar X_0]=\E[\bar Y_0]=q(0)$, it holds for some $C>0$ (independent of $\bar X_0$ and $\bar Y_0$), for every $t$,
\begin{align*}
\E|\bar X_t-\bar Y_t|^2\le e^{2Ct}\E|\bar X_0-\bar Y_0|^2.
\end{align*}
\end{proposition}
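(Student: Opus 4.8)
The plan is to exploit the two projection properties highlighted in the introduction: $\rmd\bar k$ is a projection onto paths in $[0,1]$ and $\rmd\bar K$ is an $L^2(\Omega)$-projection onto processes with prescribed mean. Concretely, let $(\bar X,\bar k)$ and $(\bar Y,\bar k^Y)$ be two solutions with initial data of common mean $q(0)$, and set $D_t = \bar X_t - \bar Y_t$. Subtracting the two equations gives
\begin{align*}
\rmd D_t = -(\mu(\bar X_t)-\mu(\bar Y_t))\,\rmd t + \rmd(\bar K_t - \bar K^Y_t) - \rmd(\bar k_t - \bar k^Y_t),
\end{align*}
a process of bounded variation (the $\sigma\,\rmd W$ terms cancel), so $\rmd(D_t^2) = 2 D_t\,\rmd D_t$ with no It\^o correction. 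Integrating,
\begin{align*}
\E|D_t|^2 = \E|D_0|^2 - 2\int_0^t \E\big[D_r(\mu(\bar X_r)-\mu(\bar Y_r))\big]\rmd r + 2\int_0^t \E\big[D_r\,\rmd(\bar K_r-\bar K^Y_r)\big] - 2\int_0^t \E\big[D_r\,\rmd(\bar k_r-\bar k^Y_r)\big].
\end{align*}

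The core of the argument is to control the last three terms. For the drift term, Condition \ref{assumptions_mu}-(i) (one-sided Lipschitz, constant $c$) gives $-D_r(\mu(\bar X_r)-\mu(\bar Y_r)) \le c|D_r|^2$ pointwise, hence that term is bounded by $2c\int_0^t \E|D_r|^2\,\rmd r$. For the $\bar K$ term, note $\bar K_r - \bar K^Y_r$ is deterministic (both equal $\int_0^r(\E\mu(\bar X)+\dot q)\rmd s + \E\bar k_r$, resp. with $\bar Y$), so $\E[D_r\,\rmd(\bar K_r-\bar K^Y_r)] = \E[D_r]\,\rmd(\bar K_r-\bar K^Y_r) = 0$ since $\E[\bar X_r] = \E[\bar Y_r] = q(r)$ by Remark \ref{rmk:averageX}; this term vanishes identically. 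For the $\bar k$ term, use the support condition $\rmd|\bar k| = 1_{\bar X_r\in\{0,1\}}\rmd|\bar k|$ together with $\rmd\bar k = n(\bar X_r)\rmd|\bar k|$: when $\bar X_r = 1$, $n=+1$ and $\bar Y_r\le 1$ forces $D_r\ge 0$, so $D_r\,\rmd\bar k_r = D_r\rmd|\bar k|_r\ge 0$; when $\bar X_r = 0$, $n=-1$ and $\bar Y_r\ge 0$ forces $D_r\le 0$, so again $D_r\,\rmd\bar k_r = -D_r\rmd|\bar k|_r\ge 0$. Thus $-\int_0^t\E[D_r\,\rmd\bar k_r]\le 0$, and symmetrically $-\int_0^t\E[-D_r\,\rmd\bar k^Y_r] = \int_0^t\E[D_r\,\rmd\bar k^Y_r]\le 0$ by the same reasoning with the roles of $\bar X,\bar Y$ swapped. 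Hence the full $\bar k$ contribution $-2\int_0^t\E[D_r\,\rmd(\bar k_r-\bar k^Y_r)]\le 0$.

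Combining, $\E|D_t|^2 \le \E|D_0|^2 + 2c\int_0^t\E|D_r|^2\,\rmd r$, and Gr\"onwall's inequality yields $\E|D_t|^2\le e^{2ct}\E|D_0|^2$, which is the claimed bound with $C = c$. Taking $\bar X_0 = \bar Y_0$ gives pathwise uniqueness of $\bar X$ (and then of $\bar k - \E\bar k$ via the equation).

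The main obstacle — and the only place requiring care — is justifying the manipulation $\E|D_t|^2 = \E|D_0|^2 + 2\int_0^t\E[D_r\,\rmd D_r]$ rigorously, i.e. that one may integrate by parts and exchange expectation with the time integral. This is where the integrability hypotheses in \eqref{meanfield_SDE_single} ($\int_0^T\E|\mu(\bar X)|\rmd r<\infty$, $\E\int_0^T|\rmd\bar k|<\infty$) are used: since $D$ is $BV$ with integrable total variation and bounded (values in $[-1,1]$), Fubini applies and the pathwise integration-by-parts formula for $BV$ functions (no quadratic variation) holds $\omega$-wise. The fact that the proposition is stated for possibly non-adapted solutions is harmless here, since the whole computation is pathwise in $t$ and only uses the mean constraint, not any martingale property — this is precisely why the authors can later reuse it for limit points of the particle system.
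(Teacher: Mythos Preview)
Your proof is correct and follows essentially the same approach as the paper's: subtract the two equations, expand $\rmd|\bar X-\bar Y|^2$, use the one-sided Lipschitz condition on $-\mu$, the sign of the reflection term $-(\bar X-\bar Y)\rmd\bar k^{\bar X}\le 0$ (and symmetrically for $\bar k^{\bar Y}$), the vanishing of the $\bar K$ contribution after taking expectation via $\E\bar X_t=\E\bar Y_t=q(t)$, and conclude by Gronwall. Your observation that $D_t$ is BV so no It\^o correction arises, together with your explicit integrability justification at the end, are minor refinements of the paper's presentation but do not change the strategy.
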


\begin{proof}
It is enough to prove stability. We will use the superscripts $\bar X$, $\bar Y$ for the quantities $\bar K$, $\bar k$, ... associated with $\bar X$, $\bar Y$. By It\^o formula for continuous semimartingales \cite{revuz1999} we have
\begin{align*}
&\rmd|\bar X-\bar Y|^2\\
&= 2(\bar X-\bar Y)(-\mu(\bar X)+\mu(\bar Y))\rmd t +2(\bar X-\bar Y)\rmd \bar K^{\bar X} -2(\bar X-\bar Y)\rmd \bar K^{\bar Y}\\
&\ \ -2(\bar X-\bar Y)\rmd \bar k^{\bar X} +2(\bar X-\bar Y)\rmd \bar k^{\bar Y}.
\end{align*}
For the first addend, the one-side Lipschitz condition of $\mu$ implies
\begin{align*}
(\bar X-\bar Y)(-\mu(\bar X)+\mu(\bar Y))\le c|\bar X-\bar Y|^2.
\end{align*}
For the addends with $\bar k$, the orientation of $\bar k$ (as the outward normal) implies
\begin{align*}
-\int^t_0(\bar X-\bar Y)\rmd \bar k^{\bar X} \le 0
\end{align*}
and similarly for $(\bar X-\bar Y)\rmd \bar k^{\bar Y}$. For the addends with $K$, we take the expectation and use that $K$ is deterministic and that $E[\bar X_t]=E[\bar Y_t]=q(t)$ (see Remark \ref{rmk:averageX}): we obtain
\begin{align*}
\E\int^t_0(\bar X-\bar Y)\rmd K^{\bar X} = \int^t_0(\E[\bar X]-\E[\bar Y])\rmd K^{\bar X} =0.
\end{align*}
Putting all together, we get
\begin{align*}
\E|\bar X_t-\bar Y_t|^2 \le \E|\bar X_0-\bar Y_0|^2 + C\int^t_0\E|\bar X_r-\bar Y_r|^2\rmd r.
\end{align*}
We conclude by Gronwall inequality.
\end{proof}


\begin{proposition}
Assume Condition \ref{assumptions_mu}-(i) and that $q$ is measurable bounded. Let $\mu^n$ a sequence of functions, with uniformly bounded one-side Lipschitz constant, converging uniformly to $\mu$ on every compact subset of $]0,1[$, such that $|\mu^n|\le C|\mu|$ on $]0,1[$. Call $\bar X^n$, $\bar X$ the solutions to the SDE \eqref{meanfield_SDE_single} resp.\ with $\mu^n$, $\mu$ and with the same initial condition. Then it holds, as $n\rightarrow\infty$,
\begin{align*}
\sup_{t\in[0,T]}\E|\bar X_t-\bar X^n_t|^2 \rightarrow 0.
\end{align*}
\end{proposition}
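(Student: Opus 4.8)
The plan is to run the same It\^o/Gronwall argument as in Proposition \ref{prop:uniq_McKVla}, but now applied to the \emph{difference} between the solution $\bar X$ of the limiting equation and the solution $\bar X^n$ of the $\mu^n$-equation, keeping track of the extra error term coming from $\mu^n \ne \mu$. Writing $D_t = \bar X_t - \bar X^n_t$ and using It\^o's formula for continuous semimartingales, one gets
\begin{align*}
\rmd |D|^2 &= 2 D\,(-\mu(\bar X) + \mu^n(\bar X^n))\,\rmd t + 2 D\,\rmd(\bar K - \bar K^n) - 2 D\,\rmd(\bar k - \bar k^n).
\end{align*}
The $\bar k$-terms are killed exactly as before by the outward-normal orientation: $-\int_0^t D\,\rmd \bar k \le 0$ and $\int_0^t D\,\rmd \bar k^n \le 0$ (both $\bar X,\bar X^n$ live in $[0,1]$, so e.g. $(\bar X - \bar X^n)\rmd\bar k^{\bar X} = (\bar X-1)\rmd\bar k^{\bar X}\cdot 1_{\bar X=1} + (\bar X-0)\rmd\bar k^{\bar X}\cdot 1_{\bar X=0}$ and similarly, with the right signs). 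The $\bar K$-terms vanish in expectation since $\bar K, \bar K^n$ are deterministic and $\E[\bar X_t] = \E[\bar X^n_t] = q(t)$ for all $t$ by Remark \ref{rmk:averageX} (this uses Condition \ref{assumptions_x0} and the fact that $\mu^n$ satisfies the same structural hypotheses).

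For the drift term I would split
\begin{align*}
D\,(-\mu(\bar X) + \mu^n(\bar X^n)) = D\,(-\mu(\bar X) + \mu(\bar X^n)) + D\,(\mu(\bar X^n) - \mu^n(\bar X^n)).
\end{align*}
The first piece is $\le c |D|^2$ by the one-sided Lipschitz condition \eqref{eq:mu_oneside_Lip}. After taking expectations one arrives at
\begin{align*}
\E|D_t|^2 \le C\int_0^t \E|D_r|^2\,\rmd r + 2\int_0^t \E\bigl[|D_r|\,|\mu(\bar X^n_r) - \mu^n(\bar X^n_r)|\bigr]\,\rmd r,
\end{align*}
and the claim will follow from Gronwall once I show the error term $R_n := \int_0^T \E\bigl[|D_r|\,|\mu(\bar X^n_r) - \mu^n(\bar X^n_r)|\bigr]\,\rmd r \to 0$.

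The main obstacle, and the only genuinely new point, is controlling $R_n$, because $\mu^n \to \mu$ only locally uniformly on $]0,1[$, while near the boundary $\mu$ may be singular (only $|\mu^n| \le C|\mu|$ there). The plan is a cutoff-plus-uniform-integrability argument. Fix $\delta>0$. On the event $\{\bar X^n_r \in [\delta, 1-\delta]\}$ the integrand is bounded by $\|\mu - \mu^n\|_{L^\infty([\delta,1-\delta])} \cdot \E|D_r|$, which tends to $0$ by the locally uniform convergence and the (uniform in $n$) bound $\E|D_r|^2 \le \mathrm{const}$ (itself a consequence of $\bar X, \bar X^n \in [0,1]$). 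On the complementary event $\{\bar X^n_r \notin [\delta,1-\delta]\}$, I bound $|D_r| \le 1$ and estimate $\E[\,|\mu(\bar X^n_r) - \mu^n(\bar X^n_r)|\,1_{\bar X^n_r \notin [\delta,1-\delta]}] \le 2\,\E[\,|\mu(\bar X^n_r)|\,1_{\bar X^n_r \notin [\delta,1-\delta]}]$ using $|\mu^n| \le C|\mu|$; this last quantity can be made small, uniformly in $n$ and $r$, by choosing $\delta$ small, \emph{provided} $\sup_n \sup_r \E[\,|\mu(\bar X^n_r)|\,1_{\bar X^n_r \notin [\delta,1-\delta]}]\to 0$ as $\delta\to 0$. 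To get this, I would invoke Condition \ref{assumptions_mu}-(ii), which gives $|\mu(x)| \le K/\mathrm{dist}(x,\partial I)$ near the boundary, together with a uniform-in-$n$ moment bound on the "inverse distance to the boundary'' of $\bar X^n_r$ — i.e. $\sup_n \sup_r \E[\mathrm{dist}(\bar X^n_r, \partial I)^{-1}\,1_{\mathrm{dist}<\delta}] \to 0$. Such a bound should follow from the a priori estimates proved later in the paper (the uniform $BV$/Hölder bounds and the occupation-time-of-the-boundary estimates that force the solutions to spend little time very close to $\partial I$, as used in the tightness argument and reflected in Condition \ref{assumptions_mu}-(iii)); if a cleaner self-contained estimate is available one could instead note that $\int_0^T \mu(\bar X^n_r)\,\rmd r$ is uniformly integrable because $\E\int_0^T|\mu(\bar X^n_r)|\,\rmd r$ is uniformly bounded (finiteness is part of the definition of solution, and the bound is uniform by the a priori estimates), which already yields the required smallness via a standard uniform-integrability/Markov argument. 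Either way, combining the two events gives $\limsup_n R_n \le \varepsilon(\delta) \to 0$, and then Gronwall closes the proof.
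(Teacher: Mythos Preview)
Your It\^o/Gronwall skeleton and the handling of the $\bar k$- and $\bar K$-terms are exactly right and match the paper. The difference is in the drift splitting, and here your choice creates a genuine difficulty that the paper avoids.

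You split $-\mu(\bar X)+\mu^n(\bar X^n)$ as $(-\mu(\bar X)+\mu(\bar X^n)) + (-\mu^n(\bar X^n)+\mu(\bar X^n))$, so the error term $\mu-\mu^n$ is evaluated at $\bar X^n$. To close the argument you then need $\sup_n \int_0^T \E\bigl[|\mu(\bar X^n_r)|\,1_{\bar X^n_r\notin[\delta,1-\delta]}\bigr]\rmd r \to 0$ as $\delta\to 0$, i.e.\ a bound \emph{uniform in $n$}. But the proposition assumes only Condition \ref{assumptions_mu}-(i); you do not have Condition (ii), nor the later BV/H\"older a priori estimates (which in any case are proved for the particle system, not for these McKean--Vlasov solutions). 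Worse, from the definition of solution for $\bar X^n$ you only know $\int_0^T\E|\mu^n(\bar X^n)|\,\rmd r<\infty$, and the hypothesis $|\mu^n|\le C|\mu|$ goes the wrong way to conclude anything about $\int_0^T\E|\mu(\bar X^n)|\,\rmd r$. Finally, even a uniform $L^1$ bound $\sup_n \int_0^T\E|\mu(\bar X^n)|\,\rmd r<\infty$ would not by itself give the uniform smallness near the boundary that you need; that would require uniform integrability, i.e.\ something stronger.

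The paper's fix is simply to split the other way:
\[
-\mu(\bar X)+\mu^n(\bar X^n) = \bigl(-\mu(\bar X)+\mu^n(\bar X)\bigr) + \bigl(-\mu^n(\bar X)+\mu^n(\bar X^n)\bigr).
\]
Now the second bracket is handled by the \emph{uniform} one-sided Lipschitz constant of $\mu^n$ (giving $\le c|D|^2$), and the first bracket is evaluated at the \emph{fixed} process $\bar X$. Since $\int_0^T\E|\mu(\bar X)|\,\rmd r<\infty$ is part of the definition of solution, absolute continuity of this single finite integral gives, for every $\eps>0$, a $\delta>0$ with $\E\int_0^T 1_{\bar X\notin[\delta,1-\delta]}|\mu(\bar X)|\,\rmd r<\eps$; combined with $|\mu^n|\le C|\mu|$ this controls the boundary contribution, while uniform convergence of $\mu^n\to\mu$ on $[\delta,1-\delta]$ handles the interior. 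No uniformity in $n$ is needed, and the proof closes by Gronwall exactly as you wrote.
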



\begin{proof}
By It\^o formula we have, proceeding as in the previous proof, we obtain
\begin{align*}
\rmd|\bar X-\bar X^n|^2 = 2(\bar X-\bar X^n)(-\mu(\bar X)+\mu^n(\bar X))\rmd t +2(\bar X-\bar X^n)(-\mu^n(\bar X)+\mu^n(\bar X^n))\rmd t +\rmd(\text{other terms}),
\end{align*}
where the other terms have non-positive expectation. For the second addend, the uniform one-side Lipschitz condition implies, for some $c>0$ independent of $n$,
\begin{align*}
(\bar X-\bar X^n)(-\mu^n(\bar X)+\mu^n(\bar X^n))\le c|\bar X-\bar X^n|^2.
\end{align*}
For the first addend, the integrability condition on $\mu$ in \eqref{meanfield_SDE_single} implies that, for every $\eps>0$, there exists $\delta>0$ such that
\begin{align*}
\E\int^T_01_{\bar X\notin [\delta,1-\delta]}|\mu(\bar X)|\rmd r<\eps
\end{align*}
and similarly for $\mu^n$ since $|\mu^n|\le C|\mu|$. By the uniform convergence of $\mu^n$ to $\mu$ on $[\delta,1-\delta]$, there exists $n_0$ such that, for every $n\ge n_0$, $|\mu^n-\mu|<\eps$ on $[\delta,1-\delta]$. Therefore we have
\begin{align*}
&\E\int^t_0(\bar X-\bar X^n)(-\mu(\bar X)+\mu^n(\bar X))\rmd r\\
&\le \E\int^t_01_{\bar X\in [\delta,1-\delta]}|\mu(\bar X)-\mu^n(\bar X)|\rmd r +\E\int^t_01_{\bar X\notin [\delta,1-\delta]}(|\mu(\bar X)|+|\mu^n(\bar X)|)\rmd r \le C\epsilon.
\end{align*}
Finally we obtain, for every $n\ge n_0$,
\begin{align*}
\E|\bar X_t-\bar X^n_t|^2 \le C\epsilon + C\int^t_0\E|\bar X_r-\bar X^n_r|^2\rmd r.
\end{align*}
We conclude again by Gronwall lemma.
\end{proof}

For the proof of convergence of the particle system, it is actually useful a slightly stronger uniqueness result, among a generalized class of solutions. Given a probability space $(\Omega,\mc{A},P)$ and a Brownian motion $W$ on it (with respect to its natural filtration), we call generalized solution a couple $(\bar X,\bar k)$ of $\mc{A}\otimes\mc{B}([0,T])$-measurable maps, satisfying the system \eqref{meanfield_SDE_single} (or equivalently \eqref{eq:meanfield_SDE_equiv}) $P$-a.s.,\ without any adaptedness condition; we also do not require $\mc{A}$ to be complete with respect to $P$. We also call weak generalized solution the object $(\Omega,\mc{A},W,\bar X,\bar k,P)$. Note that the system makes sense even without adaptedness, since the noise is additive. The difference with the usual concept of solution lies exactly in the lack of adaptability (and lack of completeness of the $\sigma$-algebra $\mc{A}$). We say that $\bar X$ is a generalized solution if there exists a $\mc{A}\otimes\mc{B}([0,T])$-measurable map $\bar k$ such that $(\bar X,\bar k)$ is a generalized solution.

\begin{lemma}
Assume Condition \ref{assumptions_mu}-(i) and that $q$ is measurable bounded. Assume also Condition \ref{assumptions_x0} on $\bar X_0$. Given $(\Omega,\mc{A},P)$ and $W$, uniqueness holds among generalized solutions.
\end{lemma}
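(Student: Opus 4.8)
The plan is to reduce uniqueness among generalized solutions to essentially the same computation as in Proposition~\ref{prop:uniq_McKVla}, checking carefully that the key facts used there survive the loss of adaptedness. The three ingredients in that proof were: (a) It\^o's formula for $|\bar X-\bar Y|^2$; (b) the sign condition $\int_0^t(\bar X-\bar Y)\,\rmd\bar k^{\bar X}\le 0$ coming from the orientation of the reflecting term; (c) the identity $\E[(\bar X-\bar Y)\,\rmd\bar K^{\bar X}]=0$, which used that $\bar K^{\bar X}$ is deterministic and that $\E[\bar X_t]=\E[\bar Y_t]=q(t)$. Ingredient (b) is purely pathwise: it only uses that $\rmd\bar k^{\bar X}=n(\bar X_t)\rmd|\bar k^{\bar X}|$, that $\rmd|\bar k^{\bar X}|$ is supported on $\{\bar X_t\in\{0,1\}\}$, and that $\bar Y_t\in[0,1]$, so it holds verbatim for generalized solutions. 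Ingredient (c) is also fine: by Remark~\ref{rmk:averageX} (whose proof only integrates the SDE and takes expectations, with no adaptedness needed) any generalized solution still satisfies $\E[\bar X_t]=q(t)$, and $\bar K^{\bar X}$ is deterministic by definition of the system, so $\E\int_0^t(\bar X-\bar Y)\,\rmd\bar K^{\bar X}=\int_0^t(\E\bar X_r-\E\bar Y_r)\,\rmd\bar K^{\bar X}_r=0$.

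The only genuinely delicate point is ingredient (a): the classical It\^o formula for continuous semimartingales requires $\bar X$ to be a semimartingale with respect to some filtration, which is not granted here. The fix is that $\bar X-\bar Y$ is, pathwise, the sum of a $C^1$-in-time part (the drift and the deterministic $\bar K$ contributions), a $BV$ part (the reflecting terms $\bar k^{\bar X},\bar k^{\bar Y}$), and a \emph{vanishing} martingale part: the two solutions are driven by the \emph{same} Brownian motion $W$ with the same coefficient $\sigma$, so the $\sigma\,\rmd W$ terms cancel and $\bar X_t-\bar Y_t$ is actually an absolutely-continuous-plus-$BV$, hence pure $BV$, continuous path in $t$, for a.e.\ $\omega$. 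For such paths the chain rule $\rmd|\bar X-\bar Y|^2=2(\bar X-\bar Y)\,\rmd(\bar X-\bar Y)$ holds in the ordinary Stieltjes sense (no It\^o correction, no stochastic integral), so no semimartingale/adaptedness theory is needed at all. Thus I would first record that $\bar X-\bar Y$ has continuous $BV$ paths $P$-a.s., then apply the deterministic product rule $\omega$ by $\omega$, then integrate in $\omega$ and invoke Fubini (justified by the integrability conditions $\int_0^T\E|\mu(\bar X)|\,\rmd r<\infty$ and $\E\int_0^T|\rmd\bar k|<\infty$ built into \eqref{meanfield_SDE_single}) to bring the expectation inside, exactly as in Proposition~\ref{prop:uniq_McKVla}.

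Carrying this out, after integrating and taking expectations one gets
\[
\E|\bar X_t-\bar Y_t|^2\le \E|\bar X_0-\bar Y_0|^2+2c\int_0^t\E|\bar X_r-\bar Y_r|^2\,\rmd r,
\]
using the one-side Lipschitz bound for $\mu$ on the drift term, the sign $\le 0$ from both $\bar k$ terms, and the vanishing of the $\bar K$ terms. Gronwall's inequality then forces $\E|\bar X_t-\bar Y_t|^2\le e^{2ct}\E|\bar X_0-\bar Y_0|^2$, and since any two generalized solutions share the same initial datum $\bar X_0$ the right-hand side is zero, so $\bar X=\bar Y$ as elements of $C([0,T])$ a.s.; uniqueness of $\bar k-\E\bar k$ then follows from the SDE. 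The main obstacle is purely the bookkeeping of the first paragraph — making sure each of (a),(b),(c) is stated in a form that does not secretly use adaptedness or completeness of $\mc A$ — and in particular phrasing (a) as a deterministic Stieltjes chain rule rather than as It\^o's formula; once that is done the estimate is identical to the adapted case.
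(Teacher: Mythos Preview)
Your proposal is correct and matches the paper's own proof essentially verbatim: the paper also observes that $\bar X-\bar Y$ is a continuous $BV$ process (the $\sigma\,\rmd W$ terms cancel), fixes $\omega$ outside a $P$-null set, applies the deterministic chain rule to $|\bar X-\bar Y|^2$, and then invokes the remainder of the proof of Proposition~\ref{prop:uniq_McKVla} without change. Your explicit identification of why each of the three ingredients survives the loss of adaptedness is a welcome elaboration of what the paper leaves implicit.
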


\begin{proof}
Let $(\bar X,\bar k^{\bar X})$ and $(\bar Y,\bar k^{\bar Y})$ be two solutions. Then $\bar X-\bar Y$ is a $BV$ and continuous process satisfying $P$-a.e.
\begin{align*}
\rmd (\bar X-\bar Y) = (-\mu(\bar X)+\mu(\bar Y))\rmd t +\rmd (\bar K^{\bar X} - \bar K^{\bar Y}) +\rmd (\bar k^{\bar X} - \bar k^{\bar Y}).
\end{align*}
Each of the addends in the right-hand side is $BV$ and continuous, in particular we can fix $\omega$ (outside a $P$-null set in $\mc{A}$) and apply the chain rule to get the expression for the differential of $|\bar X-\bar Y|^2$. The rest of the proof goes as in the proof of Proposition \ref{prop:uniq_McKVla}.
\end{proof}

Another useful tool in view of particle convergence is Yamada-Watanabe principle, which, roughly speaking, states that strong uniqueness and weak existence imply uniqueness in law and strong existence. Since we are working in a slightly non-standard context, with McKean-Vlasov SDEs and with generalized solutions, we repeat the statements and the proofs for our case:

\begin{proposition}[Yamada-Watanabe, uniqueness in law]\label{YamWat_uniq}
Let $(\Omega^i,\mc{A}^i,P^i)$, $i=1,2$, be two probability spaces, with associated Brownian motions $W^i$ and generalized solutions $(\bar X^i,\bar k^i)$, $i=1,2$, such that $\text{Law}(\bar X^1_0)=\text{Law}(\bar X^2_0)$. Then the laws of $(W^1,\bar X^1)$ and $(W^2,\bar X^2)$ coincide.
\end{proposition}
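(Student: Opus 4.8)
The plan is to run the classical Yamada--Watanabe argument, reducing uniqueness in law to the pathwise uniqueness of generalized solutions established in the preceding Lemma. First I would transfer everything onto canonical path space: for $i=1,2$ let $\mu^i\in\mc{P}(\mc{C}\times\mc{C}\times\mc{C})$, with $\mc{C}=C([0,T])$, be the law of the triple $(W^i,\bar X^i,\bar k^i)$ (all three components are continuous). Its first marginal is Wiener measure $\mathbb{W}$, the law of $\bar X^i_0$ is $\lambda_0:=\text{Law}(\bar X^1_0)=\text{Law}(\bar X^2_0)$, and --- since the initial datum of a (generalized) solution is independent of its driving Brownian motion --- the joint law $\pi$ of $(\bar X^i_0,W^i)$ equals $\lambda_0\otimes\mathbb{W}$, the same for $i=1$ and $i=2$.

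Next I would disintegrate $\mu^i$ over the pair ``initial point plus Brownian path'', writing $\mu^i(\rmd w,\rmd y,\rmd k)=\pi(\rmd x_0,\rmd w)\,R^i_{(x_0,w)}(\rmd y,\rmd k)$ with $R^i$ a regular conditional probability (which exists since all the spaces involved are Polish; incompleteness of $\mc{A}^i$ is harmless once one has passed to canonical space). I then form the coupling measure on $\mr\times\mc{C}\times\mc{C}^2\times\mc{C}^2$,
\[
\Theta(\rmd x_0,\rmd w,\rmd y_1,\rmd k_1,\rmd y_2,\rmd k_2):=\pi(\rmd x_0,\rmd w)\,R^1_{(x_0,w)}(\rmd y_1,\rmd k_1)\,R^2_{(x_0,w)}(\rmd y_2,\rmd k_2),
\]
so that, under $\Theta$, the coordinate $W$ is a Brownian motion, the two coordinate pairs $(Y^1,k^1)$ and $(Y^2,k^2)$ are driven by this \emph{same} $W$, and $Y^1(0)=x_0=Y^2(0)$ $\Theta$-a.s. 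The point to verify here is that each $(Y^j,k^j)$ is again a generalized solution of \eqref{meanfield_SDE_single} on the probability space $(\mr\times\mc{C}\times\mc{C}^2\times\mc{C}^2,\Theta)$: since the $(w,y_j,k_j)$-marginal of $\Theta$ is precisely $\mu^j$, the quantities $\E_\Theta[\mu(Y^j_t)]$ and $\E_\Theta[k^j_t]$ agree with those computed under $P^j$, so the deterministic drift $\bar K^j$ is reproduced unchanged and the pathwise identities in \eqref{meanfield_SDE_single} (the SDE, the support conditions on $\bar k$, the a.s.\ bounds on $Y^j$) are inherited $\Theta$-a.s.

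I then invoke the preceding Lemma on $(\mr\times\mc{C}\times\mc{C}^2\times\mc{C}^2,\Theta)$ with Brownian motion $W$ --- its hypotheses (Condition \ref{assumptions_mu}-(i), boundedness of $q$, and $\E_\Theta[x_0]=q(0)$) all hold --- to the two generalized solutions $(Y^1,k^1)$ and $(Y^2,k^2)$, which share the initial datum $x_0$: this forces $Y^1=Y^2$ $\Theta$-a.s. Hence, for $\pi$-a.e.\ $(x_0,w)$ the $y$-marginals of $R^1_{(x_0,w)}$ and $R^2_{(x_0,w)}$ coincide, and since $\pi$ is the same for the two indices, integrating back yields that the pushforwards of $\mu^1$ and $\mu^2$ under $(w,y,k)\mapsto(w,y)$ coincide, i.e.\ $\text{Law}(W^1,\bar X^1)=\text{Law}(W^2,\bar X^2)$.

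The step I expect to be the main obstacle is the bookkeeping in the middle paragraph: checking that the two coordinate processes on the coupled space are genuine generalized solutions of the \emph{same} McKean--Vlasov equation --- that is, that the nonlocal drift $\bar K^j$, which a priori depends on the full law of $\bar X^j$, is reproduced correctly after disintegration and re-coupling. This is exactly why one disintegrates over the pair $(\bar X_0,W)$ and not over $W$ alone: it lets the two solutions be coupled so as to share both the driving noise and the initial condition while each retains its own law, and hence its own $\bar K^j$. The only additional mild point is the independence of $\bar X_0$ from $W$, used solely to ensure $\text{Law}(\bar X^i_0,W^i)=\lambda_0\otimes\mathbb{W}$ for $i=1,2$.
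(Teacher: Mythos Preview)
Your proposal is correct and follows essentially the same route as the paper: both disintegrate the law of $(W^i,\bar X^i,\bar k^i)$ over the pair $(\bar X^i_0,W^i)$, couple the two conditional kernels on a common canonical space sharing the same $(x_0,w)$, verify that each coordinate process is again a generalized solution (with the McKean--Vlasov drift $\bar K^j$ reproduced because the $(w,y_j,k_j)$-marginal is preserved), and then invoke the pathwise-uniqueness lemma for generalized solutions. Your write-up is in fact slightly more explicit than the paper's about why $\bar K^j$ is unchanged under the coupling and about the implicit use of independence of $\bar X_0$ from $W$ (which the paper also uses, by taking $P^{W,\bar X_0}$ to be the product of Wiener measure and $\text{Law}(\bar X_0)$).
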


\begin{proof}
We take $\hat{\Omega}=(C([0,T])\times \mr) \times C([0,T])^2 \times C([0,T])^2$, endowed with the Borel $\sigma$-algebra $\hat{\mc{A}}=\mc{B}(\hat{\Omega})$ (with respect to the uniform topology). We call $\hat{\omega}=((w,x_0),(\gamma^1,\kappa^1),(\gamma^2,\kappa^2))$ a generic element of $\Omega$. Let $P^{i,W^i,\bar X^i_0}$ be the conditional law of $(\bar X^i,\bar k^i)$ with respect to $W^i$ and $\bar X^i_0$, $i=1,2$. We take on $\mc{B}(\hat{\Omega})$ the probability measure $\hat P = P^{W,\bar X_0} \otimes P^{1,w,x_0} \otimes P^{2,w,x_0}$, where $P^{W,\bar X_0}$ is the product of the Wiener measure and the law of $\bar X^1_0$. We define $\hat W(\hat{\omega})=w$, $\hat X_0(\hat{\omega})=x_0$, $(\hat{X}^i(\hat{\omega}),\hat{k}^i(\hat{\omega}))=(\gamma^i,\kappa^i)$, $i=1,2$, the canonical projections. Now, for each $i=1,2$, the law of $(\hat W,\hat{X}^i,\hat{k}^i)$ is the law of $(W^i,\bar X^i,\bar k^i)$, in particular $\bar K^{\hat{X}^i}=\bar K^{\bar X^i}$. Therefore $(\hat{X}^i,\hat{k}^i)$, $i=1,2$, are two generalized solutions to \eqref{meanfield_SDE_single}, defined on the same probability space $(\hat{\Omega},\hat{\mc{A}},\hat P)$ with respect to the same Brownian motion $\hat W$ and with the same initial datum $\hat X^1_0=\hat X^2_0=\hat X_0$ $\hat P$-a.s.. By the uniqueness result, $\hat{X}^1$ and $\hat{X}^2$ must coincide $\hat P$-a.s.. Hence (calling $\gamma_\#$ the push-forward of the projection on the $\gamma$ component), $\gamma_\# P^{1,w,x_0}$ and $\gamma_\# P^{2,w,x_0}$, the conditional laws of $\hat X^1$ and $\hat X^2$ given $(\hat W, \hat X_0)=(w,x_0)$, coincide for $P^{W,\bar X_0}$-a.e. $(w,x_0)$. Therefore $\text{Law}(W^1,\bar X^1_0,\bar X^1) = P^{W,\bar{X}_0}\otimes \gamma_\# P^{1,w,x_0}$ and $\text{Law}(W^2,\bar X^2_0,\bar X^2) = P^{W,\bar{X}_0}\otimes \gamma_\# P^{2,w,x_0}$ coincide. The proof is complete.
\end{proof}

\begin{proposition}[Yamada-Watanabe, strong existence]\label{YamWat_exist}
The generalized solution $(\bar X^1,\bar k^1)$ is actually a strong solution to \eqref{meanfield_SDE_single}, that is, it is progressively measurable with respect to $(\mc{F}^{W^1,\bar X_0^1}_t)_t$, the filtration generated by $W^1$, $\bar X_0^1$ and the $P^1$-null sets (and similarly for $(\bar X^2,\bar k^2)$).
\end{proposition}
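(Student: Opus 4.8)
The plan is the classical Yamada–Watanabe argument, reusing the machinery already built in the proof of Proposition~\ref{YamWat_uniq}. I would work on the space $(\hat\Omega,\hat{\mc A},\hat P)$ constructed there (or rather the sub-object obtained by dropping the second copy), where $\hat W$ is a Brownian motion, $\hat X_0$ has the law of $\bar X_0^1$, and $(\hat X^1,\hat k^1)$ is a generalized solution of \eqref{meanfield_SDE_single} whose conditional law given $(\hat W,\hat X_0)=(w,x_0)$ is the regular conditional distribution $\gamma_\# P^{1,w,x_0}$. The key point, already exploited in Proposition~\ref{YamWat_uniq}, is that $\bar K^{\hat X^1}$ coincides with $\bar K^{\bar X^1}$, so $(\hat X^1,\hat k^1)$ really does solve the same McKean–Vlasov SDE; uniqueness of generalized solutions then pins down $\hat X^1$ $\hat P$-a.s.\ as a measurable function of $(\hat W,\hat X_0)$ alone. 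Concretely: take two independent copies of the solution driven by the \emph{same} $(\hat W,\hat X_0)$, i.e.\ build $\hat P' = P^{W,\bar X_0}\otimes P^{1,w,x_0}\otimes P^{1,w,x_0}$, obtain two generalized solutions $\hat X^{1,a},\hat X^{1,b}$ on this space with common driver and common initial datum, conclude $\hat X^{1,a}=\hat X^{1,b}$ $\hat P'$-a.s.\ by the generalized uniqueness lemma, and deduce that the conditional law $\gamma_\# P^{1,w,x_0}$ is a Dirac mass $\delta_{F(w,x_0)}$ for $P^{W,\bar X_0}$-a.e.\ $(w,x_0)$, for some jointly measurable $F:(C([0,T])\times\mr)\to C([0,T])$.

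Next I would transfer $F$ back to the original space. Since $\text{Law}(W^1,\bar X_0^1)$ equals $P^{W,\bar X_0}$ and, by construction of the regular conditional distribution, the conditional law of $\bar X^1$ given $(W^1,\bar X_0^1)$ equals $\gamma_\# P^{1,W^1,\bar X_0^1}=\delta_{F(W^1,\bar X_0^1)}$ $P^1$-a.s., it follows that $\bar X^1=F(W^1,\bar X_0^1)$ $P^1$-a.s. This exhibits $\bar X^1$ as a measurable image of $(W^1,\bar X_0^1)$, so $\bar X^1$ is $\sigma(W^1,\bar X_0^1)$-measurable, hence $(\mc F^{W^1,\bar X_0^1}_t)_t$-adapted after the usual completion/augmentation; the adaptedness (as opposed to mere measurability) is automatic because the map $F$ is nonanticipating — it can be built to depend on $w$ only through its restriction to $[0,t]$, as the conditional-law construction respects the filtration of the path space. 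Finally $\bar k^1=\bar X^1_0-\bar X^1+\int_0^{\cdot}(-\mu(\bar X^1_r))\,\rmd r+\sigma W^1+\bar K^{\bar X^1}$ is likewise a measurable functional of $(W^1,\bar X_0^1)$, so the pair $(\bar X^1,\bar k^1)$ is a strong solution; the same argument applies verbatim to $(\bar X^2,\bar k^2)$.

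The one delicate point, and the step I expect to require the most care, is verifying that the functional $F$ (equivalently, the regular conditional distribution $P^{1,w,x_0}$) is \emph{nonanticipating}, so that strong adaptedness — and not just measurability with respect to the terminal $\sigma$-algebra — holds. In the standard It\^o setting this is handled by a filtration-by-filtration conditioning argument; here one must additionally check that the McKean–Vlasov term $\bar K$, which is \emph{deterministic} and given explicitly by $\rmd\bar K=(\E[\mu(\bar X)]+\dot q)\,\rmd t+\E[\rmd\bar k]$, does not spoil this: but since $\bar K$ is the same deterministic path on the original space and on $\hat\Omega$ (it depends only on the \emph{law} of the solution, which is fixed once and for all), it contributes nothing to the adaptedness question and the usual argument goes through unchanged. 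The lack of completeness of $\mc A$ in the notion of generalized solution is harmless here since we are passing to the augmented filtration $\mc F^{W^1,\bar X_0^1}_t$ in the conclusion anyway.
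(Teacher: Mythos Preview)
Your proposal is correct and follows essentially the same Yamada--Watanabe route as the paper: couple two conditionally independent copies of the solution to the same $(W,X_0)$, invoke generalized uniqueness to collapse the conditional law to a Dirac $\delta_{F(w,x_0)}$, and transfer $F$ back to the original space. For the nonanticipating step you flag as delicate, the paper does precisely the ``filtration-by-filtration'' argument you allude to: it restricts $W^1,\bar X^1,\bar k^1$ to $[0,t]$, reruns the whole construction on $C([0,t])$ to obtain a map $Y^t$ with $\pi_t(\bar X^1)=Y^t(\pi_t(W^1),\bar X_0^1)$, and deduces adaptedness (hence progressive measurability by path continuity), then recovers $\bar k^1$ from the equation as you do.
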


\begin{proof}
We continue using the notation of the previous proof. Call $(\hat{\mc{F}}^{\hat W,\hat X_0}_t)_t$ the filtration generated by $\hat W$, $\hat X_0$ and the $P^{W,\bar X_0}$-null sets on $C([0,T])\times\mr$. Note that the conditional law of $(\hat X^1,\hat k^1,\hat X^2,\hat k^2$ given $(\hat W, \hat X_0)=(w,x_0)$ is $P^{1,w,x_0} \otimes P^{2,w,x_0}$. Hence, for $P^{W,\bar X_0}$-a.e. $(w,x_0)$, conditioning to $(\hat W, \hat X_0)=(w,x_0)$, $\hat X^1$ and $\hat X^2$ coincide a.s. and are independent. Hence, for $P^{W,\bar X_0}$-a.e. $(w,x_0)$ given, conditioning to $(\hat W, \hat X_0)=(w,x_0)$, $\hat{X}^1$ must coincide with an element $Y^T(w,x_0)$ a.s.. The random element $Y^T$, extended on a $P^{W,\bar X_0}$-null set, defines a solution map $Y^T:C([0,T])\times \mr\rightarrow C([0,T])$ which is $\hat{\mc{F}}^{\hat W,\hat X_0}_T$-measurable: indeed, for every Borel subset $B$ of $C([0,T])$, $\{ Y^T\in B\}$ coincides $P^{W,\bar X_0}$-a.s. with $\{ (w,x_0) \mid \gamma_\# P^{1,w,x_0}(B) = 1\}$, which belongs to $\hat{\mc{F}}^{\hat W,\hat X_0}_T$ (since $P^{1,w,x_0}(B)$ is $\hat{\mc{F}}^{\hat W,\hat X_0}_T$-measurable). From the previous proof, we have
\begin{align*}
\text{Law}(W^1,\bar X^1_0,\bar X^1) = P^{W,\bar{X}_0}\otimes \gamma_\# P^{1,w,x_0} = \text{Law}(W,\bar X^1_0) \otimes \delta_{Y^T(w,x_0)},
\end{align*}
therefore $\bar X^1=Y^T(W^1,\bar X^1_0)$ $P^1$-a.s.. Since $(W,\bar X^1_0)$ is measurable from $\mc{F}^{W^1,\bar X_0^1}_T$ to $\hat{\mc{F}}^{\hat W,\hat X_0}_T$, we conclude that $\bar X^1$ is $\mc{F}^{W^1,\bar X_0^1}_T$-measurable.

Concerning progressive measurability, we can restrict $W^1$, $\bar X^1$ and $\bar k^1$ on $[0,t]$ and repeat the previous argument: calling $\pi^t:C([0,T])\rightarrow C([0,t])$ the restriction operator, we get that $\pi_t(\bar X^1) = Y^t(\pi_t(W^1),\bar X^1_0)$ $P^1$-a.s. and $\pi_t(\bar X^1)$ is $\mc{F}^{W^1,\bar X_0^1}_t$-measurable. Hence $\bar X$ is adapted and therefore progressively measurable, by continuity of its paths. Progressive measurability of $\bar k^1$ follows because, $P^1$-a.s.,
\begin{align*}
\rmd \bar k^1 = -\rmd \bar X^1 -\mu(\bar X^1) \rmd t +\rmd W^1 +\E^{P^1} [\mu(\bar X^1)]\rmd t +\E^{P^1} [\rmd \bar k^1].
\end{align*}
The proof is complete.
\end{proof}

\subsection{Compactness for the particle system}

Here we consider the particle system \eqref{SDE_single_reg} and we give estimates which are uniform in $N$ and $\eps$. We will often omit the superscripts $N$ and $\eps$ for notational simplicity.

\subsubsection{$BV$ estimates}

We start estimating the $BV$ norm of the average of the drift over the particles. Throughout this subsection, we will assume Conditions \ref{assumptions_mu}-(i) on $\mu$, \ref{assumptions_q} on $q$ and \ref{assumptions_x0N} on $X_0$.

\begin{lemma}\label{BV_sum}
For every $1\le p<\infty$, it holds
\begin{align*}
\sup_{N,\eps}\E\left(\frac{1}{N}\sum^N_{i=1}\int^T_0|\mu^\eps(X^{i,N,\eps}_r)|\rmd r\right)^p + \sup_{N,\eps}\E\left(\frac{1}{N}\sum^N_{i=1}\int^T_0|\rmd k^{i,N,\eps}_r|\right)^p<+\infty.
\end{align*}
\end{lemma}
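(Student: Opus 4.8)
The plan is to derive both estimates simultaneously from one application of It\^o's formula to the quadratic barrier $\phi(x)=x(1-x)$ at each particle, summed over $i$ and divided by $N$. Three structural facts drive the computation: $\phi'(0)=1>0$, $\phi'(1)=-1<0$ and $\rmd|k^i|$ charges only $\{X^i\in\{0,1\}\}$, so that $-\phi'(X^i)\,\rmd k^i=\rmd|k^i|$ \emph{exactly}; the sign condition \eqref{eq:mu_bd} on $\mu$ near $\{0,1\}$, inherited by $\mu^\eps$, so that $-\phi'(x)\mu^\eps(x)=-(1-2x)\mu^\eps(x)$ controls $|\mu^\eps(x)|$ near the boundary; and the uniform separation $\xi\le q(t)\le 1-\xi$ of Condition \ref{assumptions_q}, which gives $|1-2q(t)|\le 1-2\xi<1$ and is what makes the nonlocal interaction term absorbable.

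First I would rewrite \eqref{SDE_single_reg} as $\rmd X^i=-\mu^\eps(X^i)\,\rmd t+\sigma\,\rmd W^i-\rmd k^i+\rmd K^N$, where $\rmd K^N=\big(\frac{1}{N}\sum_j\mu^\eps(X^j)+\dot q\big)\,\rmd t-\frac{\sigma}{N}\sum_j\rmd W^j+\frac{1}{N}\sum_j\rmd k^j$, and note $\rmd\langle X^i\rangle=\sigma^2(1-\tfrac{1}{N})\,\rmd t$. Using that $\frac{1}{N}\sum_i X^i_t=q(t)$ (hence $\frac{1}{N}\sum_i\phi'(X^i_t)=1-2q(t)$) and $-\phi'(X^i)\,\rmd k^i=\rmd|k^i|$, It\^o's formula summed and averaged yields, with $V_t:=\frac{1}{N}\sum_i\int_0^t|\rmd k^i_r|$,
\[
V_t=\frac{1}{N}\sum_i\big(\phi(X^i_t)-\phi(X^i_0)\big)+\int_0^t\frac{1}{N}\sum_i(1-2X^i_r)\mu^\eps(X^i_r)\,\rmd r-\int_0^t(1-2q(r))\,\rmd K^N_r-M^{(1)}_t+\sigma^2(1-\tfrac{1}{N})t,
\]
with $M^{(1)}_t=\int_0^t\frac{\sigma}{N}\sum_i(1-2X^i_r)\,\rmd W^i_r$ and $\langle M^{(1)}\rangle_T\le\sigma^2T/N$. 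Next, fixing $\rho\in(0,\xi)$ small enough that \eqref{eq:mu_bd} holds on $(0,\rho)\cup(1-\rho,1)$ and putting $M_\rho:=\sup_{[\rho,1-\rho]}|\mu|<\infty$, I would observe (using $|\mu^\eps|\le|\mu|$ and that $\mu^\eps$ satisfies \eqref{eq:mu_bd} uniformly in $\eps$) the pointwise bound $(1-2x)\mu^\eps(x)\le-(1-2\rho)|\mu^\eps(x)|$ for $x\notin[\rho,1-\rho]$ and $\le M_\rho$ on $[\rho,1-\rho]$, which gives $\int_0^t\frac{1}{N}\sum_i(1-2X^i_r)\mu^\eps(X^i_r)\,\rmd r\le-(1-2\rho)B_t+M_\rho T$, where $B_t:=\int_0^t\frac{1}{N}\sum_i|\mu^\eps(X^i_r)|1_{X^i_r\notin[\rho,1-\rho]}\,\rmd r$.

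Then I would expand $-\int_0^t(1-2q)\,\rmd K^N$ and bound its four pieces, using $|1-2q|\le1-2\xi$: the drift piece by $(1-2\xi)(B_t+M_\rho T)$, the $\dot q$ piece by a bounded constant, the Brownian piece by a martingale $M^{(2)}_t$ with $\langle M^{(2)}\rangle_T\le\sigma^2T/N$, and the reflection piece $-\int_0^t(1-2q)\frac{1}{N}\sum_j n(X^j)\,\rmd|k^j|$ by $(1-2\xi)V_t$. Together with $|\phi(X^i_t)-\phi(X^i_0)|\le\tfrac{1}{4}$ and $\sigma^2(1-\tfrac{1}{N})t\le\sigma^2T$, this produces the pathwise inequality
\[
2\xi\,V_t+2(\xi-\rho)\,B_t\le C_0-M^{(1)}_t+M^{(2)}_t,
\]
for a deterministic $C_0=C_0(M_\rho,\xi,\sigma,T,\mathrm{Lip}(q))$, where crucially $\xi-\rho>0$. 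Since $V_t,B_t\ge0$, the left side dominates each of $2\xi V_t$ and $2(\xi-\rho)B_t$ and is nonnegative, so both are $\le C_0+|M^{(1)}_t|+|M^{(2)}_t|$. To take $p$-th moments without an a priori integrability assumption, I would localize at $\tau_n=\inf\{t:V_t>n\}\wedge T$ (increasing to $T$, as $V$ is finite-valued and continuous); on $[0,\tau_n]$ the $M^{(i)}$ are true martingales, so Burkholder--Davis--Gundy gives $\E[|M^{(i)}_{\tau_n}|^p]\le C_p\,\E[\langle M^{(i)}\rangle_{\tau_n}^{p/2}]\le C_p(\sigma^2T)^{p/2}$, uniformly in $n,N,\eps$. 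Hence $\E[V_{\tau_n}^p]+\E[B_{\tau_n}^p]\le C_p$, Fatou yields $\E[V_T^p]+\E[B_T^p]\le C_p$, and since $\frac{1}{N}\sum_i\int_0^T|\mu^\eps(X^i_r)|\,\rmd r\le B_T+M_\rho T$ and $\frac{1}{N}\sum_i\int_0^T|\rmd k^i_r|=V_T$, the Lemma follows (existence of a solution being Proposition \ref{lem:wellpos_particle}).

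The hard part is the self-reference: the interaction term $\int_0^t(1-2q)\,\rmd K^N$ feeds both the empirical drift average $\frac{1}{N}\sum_j\mu^\eps(X^j)$ and the empirical reflection average $\frac{1}{N}\sum_j k^j$ back into the bound, so a crude estimate is circular, and one cannot simply discard $B_t$ since the interaction drift \emph{is} an average of $\mu^\eps$. The estimate closes only because the single test function $\phi(x)=x(1-x)$ both linearizes the reflection with no error and, via \eqref{eq:mu_bd}, manufactures $|\mu^\eps|$ near the boundary, while $|1-2q|\le1-2\xi<1$ with a strict gap and $\rho$ can be chosen below $\xi$ — so the feedback coefficients $1-2\xi$ in front of $V_t$ and $B_t$ are strictly smaller than the coefficients $1$ and $1-2\rho$ generated on the left. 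Making these constants line up is the only delicate point; the martingale estimates and the localization are routine.
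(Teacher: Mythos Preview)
Your proof is correct and takes a genuinely different route from the paper's. The paper applies It\^o to $|X^i-q(t)|^2$; since $\frac{1}{N}\sum_i(X^i_t-q(t))=0$, the averaged interaction term $\frac{1}{N}\sum_i(X^i-q)\,\rmd K^N$ vanishes \emph{identically}, and the $|\mu^\eps|$ and $\rmd|k^i|$ contributions are then extracted via the inequalities $(x-q)\mu^\eps(x)\ge\frac{\xi}{2}|\mu^\eps(x)|-C$ and $(X^i-q)\,\rmd k^i\ge\xi\,\rmd|k^i|$ of Remark~\ref{rmk:drift_estimate}, which the paper derives from Condition~\ref{assumptions_mu}-(i) alone. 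You instead center at $1/2$ via $\phi(x)=x(1-x)=\tfrac14-(x-\tfrac12)^2$: this makes the reflection term an exact identity $-\phi'(X^i)\,\rmd k^i=\rmd|k^i|$, but the interaction no longer cancels and must be absorbed using the strict gap $|1-2q|\le 1-2\xi<1$ together with your choice $\rho<\xi$. The trade-off is clear: the paper gets a one-line cancellation of the nonlocal term at the price of an inequality (with factor $\xi$) on the reflection contribution; you get an exact reflection identity at the price of an absorption step, and you invoke Condition~\ref{assumptions_mu}-(iii) where the paper manages with only~(i). Both close comfortably, and your localization/BDG handling of the martingale remainders is routine as you say.
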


The proofs of this lemma and of the next one use mainly two facts:
\begin{itemize}
\item the one-side Lipschitz property of $-\mu$ and the reflection condition on $k^i$, that is $k^i$ has the same sign of $n(X^i)$;
\item the property $\frac{1}{N}\sum^N_{i=1}X^i_t-q(t)=0$.
\end{itemize}
Let us explain briefly how these two facts yield the $BV$ estimates. We will focus only on the bounds on $k^i$, the bounds on $\mu(X^i)$ being similar. As in the standard argument for boundary terms, we take the differential of $|X^i_t-q(t)|^2$:
\begin{align}
\rmd |X^i-q|^2 = -2(X_i-q)\rmd k^i +2(X^i-q)\frac{1}{N}\sum^N_{j=1}\rmd k^j +\ldots \label{eq:BV_idea}
\end{align}
Disgarding the interaction terms, using the reflection condition, we would get an inequality of the form
\begin{align*}
\rmd |X^i-q|^2 = -2(X_i-q)\rmd k^i \le -2|X_i-q|\rmd |k^i|
\end{align*}
This inequality would give a bound on $|X_i-q(t)|\rmd |k^i|$ and so on $\rmd |k^i|$ (since $|X_i-q(t)|$ is bounded from below when $X^i$ is on the boundary). However, the interaction term $2(X^i-q)\frac{1}{N}\sum^N_{j=1}\rmd k^j$ in \eqref{eq:BV_idea} cannot be bounded as before, due to the $+$ sign instead of $-$ sign. To deal with it, firstly we average over $i$: thanks to the property $\frac{1}{N}\sum^N_{i=1}X^i_t-q(t)=0$, the average of the interaction terms disappears:
\begin{align*}
\frac{1}{N}\sum^N_{i=1}(X^i-q)\frac{1}{N}\sum^N_{j=1}\rmd k^j = 0,
\end{align*}
hence we get a bound on the average of $\rmd |k^i|$ (Lemma \ref{BV_sum}). Secondly, this bound allows to control the interaction term $2(X^i-q)\frac{1}{N}\sum^N_{j=1}\rmd k^j$. Using this control in \eqref{eq:BV_idea}, we get a bound on $\rmd |k^i|$ for any $i$ (Lemma \ref{unif_BV}).


\begin{remark}\label{rmk:drift_estimate}
The one-side Lipschitz condition on $-\mu$ and the regularity of $\mu$ in the interior $]0,1[$ imply that, for any $0<c<1/2$,
\begin{align*}
&\sup_{\eps,x\in]0,1[}(\mu^\eps(x)\text{sign}(x-1/2))^- <+\infty,\\
&\sup_{\eps,x\in[c,1-c]}|\mu^\eps(x)| <+\infty.
\end{align*}
The condition $0<\xi\le q(t)\le 1-\xi<1$ implies that
\begin{align*}
(x-q(t))\text{sign}(x-1/2)1_{x\notin[\xi/2,1-\xi/2]} \ge \frac{\xi}{2}1_{x\notin[\xi/2,1-\xi/2]}.
\end{align*}
Putting together the above bounds, we get, for some $C\ge 0$ independent of $x$ and $\eps$, for every $x$ in $]0,1[$ and every $t$,
\begin{align*}
(x-q(t))\mu^\eps(x) &= (x-q(t))\text{sign}(x-1/2)(\mu^\eps(x)\text{sign}(x-1/2))^+ 1_{x\notin[\xi/2,1-\xi/2]}\\
&\quad -(x-q(t))\text{sign}(x-1/2)(\mu^\eps(x)\text{sign}(x-1/2))^- 1_{x\notin[\xi/2,1-\xi/2]}\\
&\quad -(x-q(t))\mu^\eps(x)1_{x\in[\xi/2,1-\xi/2]}\\
&\ge \frac{\xi}{2}(\mu^\eps(x)\text{sign}(x-1/2))^+ 1_{x\notin[\xi/2,1-\xi/2]} -C\\
&= \frac{\xi}{2}|\mu^\eps(x)\text{sign}(x-1/2)|1_{x\notin[\xi/2,1-\xi/2]} -\frac{\xi}{2}(\mu^\eps(x)\text{sign}(x-1/2))^- 1_{x\notin[\xi/2,1-\xi/2]} -C \\
&\ge \frac{\xi}{2}|\mu^\eps(x)|1_{x\notin[\xi/2,1-\xi/2]} -C\\
&\ge \frac{\xi}{2}|\mu^\eps(x)| -C.
\end{align*}
By continuity of $\mu^\eps$ on $[0,1]$, for $\eps>0$ the same estimate holds on the closed interval $[0,1]$.
From the reflection condition on $k$ we also get
\begin{align*}
(X^i_t-q(t))\rmd k^i_t \ge \xi \rmd |k^i|.
\end{align*}
\end{remark}

\begin{proof}
By It\^o formula, we have
\begin{align}
&\rmd |X^i-q(t)|^2\label{eq:Ito_square}\\
&= 2(X^i-q(t))(-\mu^\eps(X^i)+\frac{1}{N}\sum^N_{j=1}\mu^\eps(X^j))\rmd t +2\sigma(X^i-q(t))(\rmd W^i-\frac{1}{N}\sum^N_{j=1}\rmd W^j)\nonumber\\
&\ \ +\sigma^2(1-\frac{1}{N})\rmd t +2(X^i-q(t))(-\rmd k^i+\frac{1}{N}\sum^N_{j=1}\rmd k^j).\nonumber
\end{align}
We average over $i$. For the interaction term with $\frac{1}{N}\sum^N_{j=1}\mu^\eps(X^j)$, the condition $\frac{1}{N}\sum^N_{i=1}X^i_t=q(t)$ implies
\begin{align*}
\frac{1}{N}\sum^N_{i=1}(X^i-q(t))\frac{1}{N}\sum^N_{j=1}\mu^\eps(X^j)\rmd t = 0
\end{align*}
and similarly for the other interaction terms (with $\frac{1}{N}\sum^N_{j=1}\rmd W^j$ and with $\frac{1}{N}\sum^N_{j=1}\rmd k^j$). Hence we get
\begin{align*}
&\rmd \frac{1}{N}\sum^N_{i=1}|X^i-q(t)|^2\\
&= -2\frac{1}{N}\sum^N_{i=1}(X^i-q(t))\mu^\eps(X^i)\rmd t +2\frac{1}{N}\sum^N_{i=1}\sigma(X^i-q(t))\rmd W^i\\
&\ \ +\sigma^2(1-\frac{1}{N})\rmd t -2\frac{1}{N}\sum^N_{i=1}(X^i-q(t))\rmd k^i.
\end{align*}
Now we apply Remark \ref{rmk:drift_estimate} and obtain
\begin{align*}
&\frac{1}{N}\sum^N_{i=1}|X^i_T-q(T)|^2 + \xi\frac{1}{N}\sum^N_{i=1}\int^T_0|\mu^\eps(X^i)|\rmd r +2\xi\frac{1}{N}\sum^N_{i=1}\int^T_0\rmd |k^i|_r\\
&\le \frac{1}{N}\sum^N_{i=1}|X^i_T-q(T)|^2 +2\frac{1}{N}\sum^N_{i=1}\int^T_0(X^i-q(t))\mu^\eps(X^i)\rmd t +CT +2\frac{1}{N}\sum^N_{i=1}\int^T_0(X^i-q(t))\rmd k^i\\
&\le \frac{1}{N}\sum^N_{i=1}|X^i_0-q(0)|^2 + CT +2\sigma\left|\frac{1}{N}\sum^N_{i=1}\int^T_0(X^i_r-q(r))\rmd W^i_r\right| +\sigma^2T\\
&\le C +2\sigma\left|\frac{1}{N}\sum^N_{i=1}\int^T_0(X^i_r-q(r))\rmd W^i_r\right|
\end{align*}
By Burkholder-Davis-Gundy inequality (and boundedness of $X$ and $q$), we arrive at
\begin{align*}
&C'\E\left(\frac{1}{N}\sum^N_{i=1}\int^T_0|\mu^\eps(X^i)|\rmd r\right)^p +C'\E\left(\frac{1}{N}\sum^N_{i=1}\int^T_0\rmd |k^i|_r\right)^p\\
&\le C +C\E\left(\frac{1}{N}\sum^N_{i=1}\int^T_0|X^i_r-q(r)|^2\rmd r\right)^{p/2}\le C,
\end{align*}
which is the desired bound.
\end{proof}

Thanks to the previous Lemma, we can conclude a uniform $BV$ estimate on the drift. 

\begin{lemma}\label{unif_BV}
For every $1\le p<\infty$, it holds
\begin{align*}
\sup_{N,\eps,i=1\ldots N}\E\left(\int^T_0|\mu^\eps(X^{\eps,N,i})|\rmd r\right)^p +\sup_{N,\eps,i=1\ldots N}\E\left(\int^T_0|\rmd k^{N,\eps,i}_r|\right)^p <+\infty.
\end{align*}
\end{lemma}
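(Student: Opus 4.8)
The plan is to return to the It\^o expansion \eqref{eq:Ito_square} of $|X^i_t-q(t)|^2$, but this time \emph{without} averaging over $i$. The key observation is that, thanks to Lemma \ref{BV_sum}, the quantities $\frac1N\sum_{j=1}^N\int_0^T|\mu^\eps(X^j_r)|\,\rmd r$ and $\frac1N\sum_{j=1}^N\int_0^T\rmd|k^j|_r$ now have $L^p$ moments bounded uniformly in $N$ and $\eps$; since in \eqref{eq:Ito_square} the interaction terms for a single particle $i$ are multiplied by the bounded coefficient $X^i-q$, they are dominated in absolute value by precisely these averages, hence harmless.

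Concretely, I would integrate \eqref{eq:Ito_square} on $[0,T]$ and move the reflection term $-2\int_0^T(X^i-q)\,\rmd k^i$ and the diagonal drift term $-2\int_0^T(X^i-q)\mu^\eps(X^i)\,\rmd r$ to the left-hand side. Bounding these from below by $2\xi\int_0^T\rmd|k^i|_r$ and $\xi\int_0^T|\mu^\eps(X^i_r)|\,\rmd r-2CT$ respectively via Remark \ref{rmk:drift_estimate}, and using $|X^i_r-q(r)|\le1$ on the right-hand side (both $X^i_r$ and $q(r)$ lie in $[0,1]$), one arrives at
\begin{align*}
\xi\int_0^T|\mu^\eps(X^i_r)|\,\rmd r+2\xi\int_0^T\rmd|k^i|_r
&\le C+2\frac1N\sum_{j=1}^N\int_0^T|\mu^\eps(X^j_r)|\,\rmd r+2\frac1N\sum_{j=1}^N\int_0^T\rmd|k^j|_r\\
&\quad+2\sigma\left|\int_0^T(X^i_r-q(r))\,\rmd\Big(W^i_r-\frac1N\sum_{j=1}^NW^j_r\Big)\right|,
\end{align*}
where $C$ depends only on $\sigma,T,\xi$ (we also used $|X^i_0-q(0)|\le1$ and $\sigma^2(1-\frac1N)T\le\sigma^2T$).

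It then remains to take $L^p$ norms. The constant term is harmless; the two averaged terms are bounded in $L^p$ uniformly in $N,\eps$ by Lemma \ref{BV_sum}; and the stochastic integral has quadratic variation $\int_0^T(X^i_r-q(r))^2(1-\frac1N)\,\rmd r\le T$, so its $L^p$ norm is $\le C_pT^{1/2}$ by Burkholder--Davis--Gundy. Using $(a_1+a_2+a_3+a_4)^p\le 4^{p-1}(a_1^p+\dots+a_4^p)$ for $a_k\ge0$, this yields
\begin{align*}
\E\Big(\int_0^T|\mu^\eps(X^{i,N,\eps}_r)|\,\rmd r\Big)^p+\E\Big(\int_0^T\rmd|k^{i,N,\eps}_r|\Big)^p\le C_p
\end{align*}
uniformly in $N$, $\eps$ and $i=1,\dots,N$, which is the assertion.

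Since Lemma \ref{BV_sum} has already carried out the substantive estimate, I do not expect a genuine obstacle here: the only points requiring care are correctly tracking the signs of the six terms in \eqref{eq:Ito_square} so that the reflection and diagonal-drift contributions land on the favourable side, and the observation that boundedness of the coefficient $X^i-q$ is exactly what lets the per-particle interaction terms be absorbed into the average bounds of Lemma \ref{BV_sum}. The degenerate case $\sigma=0$ is immediate, the stochastic integral then being absent.
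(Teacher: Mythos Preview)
Your proposal is correct and follows essentially the same approach as the paper: start from \eqref{eq:Ito_square} for a fixed $i$, move the diagonal drift and reflection terms to the left via Remark \ref{rmk:drift_estimate}, bound the interaction terms using $|X^i-q|\le 1$ so that they are controlled by the averaged quantities from Lemma \ref{BV_sum}, and conclude via Burkholder--Davis--Gundy on the stochastic integral. The inequality you display coincides with the paper's, and the subsequent $L^p$ bound is obtained in the same way.
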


\begin{proof}
We start as before from formula \eqref{eq:Ito_square}, for fixed $i$, and use Remark \ref{rmk:drift_estimate}, getting
\begin{align*}
&|X^i_T-q(T)|^2 + \xi\int^T_0|\mu^\eps(X^i)|\rmd r +2\xi\int^T_0\rmd |k^i|_r\\
&\le |X^i_0-q(0)|^2 + CT +2\int^T_0|X^i_r-q(r)|\frac{1}{N}\sum^N_{j=1}|\mu^\eps(X^j_r)|\rmd r\\
&\ \ +2\sigma\left|\int^T_0(X^i_r-q(r))(\rmd W^i-\frac{1}{N}\sum^N_{j=1}\rmd W^j)\right| +\sigma^2T +2\int^T_0|X^i_r-q(r)|\frac{1}{N}\sum^N_{j=1}\rmd k^j_r\\
&\le C +2\int^T_0\frac{1}{N}\sum^N_{j=1}|\mu^\eps(X^j_r)|\rmd r +2\sigma\left|\int^T_0(X^i_r-q(r))(\rmd W^i-\frac{1}{N}\sum^N_{j=1}\rmd W^j)\right| +2\int^T_0\frac{1}{N}\sum^N_{j=1}\rmd k^j_r.
\end{align*}
By Burkholder-Davis-Gundy inequality, we get
\begin{align*}
&C'\E\left(\int^T_0|\mu^\eps(X^i)|\rmd r\right)^p +C'\E\left(\int^T_0\rmd |k^i|_r\right)^p\\
&\le C +C\E\left(\int^T_0\frac{1}{N}\sum^N_{j=1}|\mu^\eps(X^j_r)|\rmd r\right)^p +C +C\E\left(\int^T_0\frac{1}{N}\sum^N_{j=1}\rmd k^j_r\right)^p.
\end{align*}
Here we use Lemma \ref{BV_sum} and conclude
\begin{align*}
&C'\E\left(\int^T_0|\mu^\eps(X^i)|\rmd r\right)^p +C'\E\left(\int^T_0\rmd |k^i|_r\right)^p\le C.
\end{align*}
The proof is complete.
\end{proof}

\subsubsection{H\"older estimates}

In this paragraph we use a similar strategy to estimate the H\"older norm of $X^i$, first taking the average over $i$ to remove the interaction term, then using the estimate on the average to control the interaction term. In order to bound the H\"older norm, we take the It\^o differential of $|X^i_t-q(t)-X^i_s+q(s)|^2$ (instead of just $|X^i_t-q(t)|^2$).

Throughout this subsection, we will assume Conditions \ref{assumptions_mu}-(i,ii) on $\mu$, \ref{assumptions_q} on $q$ and \ref{assumptions_x0N} on $X_0$.

We start with a preliminary result which will be used in the next estimates:

\begin{lemma}\label{Holder_boundary}
For every $1\le p<\infty$, it holds for some $C_p\ge0$ independent of $s,t$,
\begin{align*}
\sup_{N,\eps,i}\E\left(\int^t_s[(X^i_r-q(r)-X^i_s+q(s))\mu^\eps(X^i_r)]^-\rmd r\right)^p\le C_p|t-s|^p,\\
\sup_{N,\eps,i}\E\left(\int^t_s[(X^i_r-q(r)-X^i_s+q(s))n(X^i_r)]^-|\rmd k^i_r|\right)^p\le C_p|t-s|^p.
\end{align*}
\end{lemma}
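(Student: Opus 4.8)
Throughout write $A_r := X^i_r - q(r) - X^i_s + q(s)$, let $L$ be the Lipschitz constant of $q$ (Condition \ref{assumptions_q}), and recall $w^- := \max(-w,0)\ge 0$. The plan is to bound the two integrands pointwise, exploiting that near the boundary the negative part is ``switched on'' exactly in the sign configuration that lets the (possibly singular) factor $|\mu^\eps(X^i_r)|$ be absorbed, and then to integrate in $r$ and apply the uniform $BV$ estimate of Lemma \ref{unif_BV}.

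\textbf{Drift term.} Split $[s,t]$ according to whether $X^i_r$ lies in the bulk $[\rho,1-\rho]$ or near one of the endpoints ($\rho$ as in Condition \ref{assumptions_mu}-(iii)). On $\{X^i_r\in[\rho,1-\rho]\}$, $\mu$ is $C^2$ hence bounded there and $|\mu^\eps|\le|\mu|$, so $|\mu^\eps(X^i_r)|\le C_\rho$, while $|A_r|\le 2$; the integrand is thus $\le 2C_\rho$. On $\{X^i_r\in\,]0,\rho[\}$ Condition \ref{assumptions_mu}-(iii) (preserved uniformly in $\eps$ by the regularisation) gives $\mu^\eps(X^i_r)\le 0$, hence $[A_r\mu^\eps(X^i_r)]^- = (A_r)^+ |\mu^\eps(X^i_r)|$; writing $A_r = X^i_r + (q(s)-q(r)) - X^i_s$ and using $X^i_r\ge 0$ gives $(A_r)^+\le X^i_r + |q(s)-q(r)|$, so
\begin{align*}
[A_r\mu^\eps(X^i_r)]^- \le X^i_r|\mu^\eps(X^i_r)| + L|t-s|\,|\mu^\eps(X^i_r)| \le C + L|t-s|\,|\mu^\eps(X^i_r)|,
\end{align*}
where $X^i_r|\mu^\eps(X^i_r)|\le X^i_r|\mu(X^i_r)|\le C$ by Condition \ref{assumptions_mu}-(ii). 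On $\{X^i_r\in\,]1-\rho,1[\}$ the same argument with $1-X^i_r$ in place of $X^i_r$ (and the second half of Condition \ref{assumptions_mu}-(ii)) gives the identical bound. Integrating over $[s,t]$,
\begin{align*}
\int_s^t [A_r\mu^\eps(X^i_r)]^-\,\rmd r \le C|t-s| + L|t-s|\int_0^T |\mu^\eps(X^i_r)|\,\rmd r,
\end{align*}
and raising to the power $p$, taking expectation and applying Lemma \ref{unif_BV} to the last term yields the first estimate.

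\textbf{Reflection term.} Since $\rmd|k^i_r| = 1_{X^i_r\in\{0,1\}}\,\rmd|k^i_r|$, it suffices to bound $[A_r n(X^i_r)]^-$ on $\{X^i_r=0\}$ and $\{X^i_r=1\}$. At $X^i_r=0$ we have $n(X^i_r)=-1$ and $A_r = (q(s)-q(r)) - X^i_s$, so $[A_r n(X^i_r)]^- = (A_r)^+ \le (q(s)-q(r))^+ \le L|t-s|$ because $X^i_s\ge 0$. At $X^i_r=1$ we have $n(X^i_r)=+1$ and $A_r = (q(s)-q(r)) + (1-X^i_s)$, so $[A_r n(X^i_r)]^- = (A_r)^- \le (q(r)-q(s))^+ \le L|t-s|$ because $1-X^i_s\ge 0$. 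Hence $[A_r n(X^i_r)]^-\le L|t-s|$ on the support of $|\rmd k^i|$, so
\begin{align*}
\int_s^t [A_r n(X^i_r)]^-\,|\rmd k^i_r| \le L|t-s|\int_0^T |\rmd k^i_r|,
\end{align*}
and the second estimate follows again by raising to the power $p$, taking expectation and invoking Lemma \ref{unif_BV}.

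\textbf{Main obstacle.} The only subtle point is the drift term near the boundary, where $|\mu^\eps(X^i_r)|$ may be of order $1/X^i_r$ (resp. $1/(1-X^i_r)$), so the crude estimate $|A_r|\,|\mu^\eps(X^i_r)|$ is not integrable with the correct power of $|t-s|$. The resolution is the observation that the negative part is nonzero precisely when $A_r>0$ near $0$ (resp. $A_r<0$ near $1$), and in that regime $A_r$ splits into $X^i_r$ (resp.\ $1-X^i_r$), which absorbs the singularity via Condition \ref{assumptions_mu}-(ii); an increment of $q$, which is $O(|t-s|)$; and a sign-definite term that can be discarded. Everything else uses only the reflection condition, the boundary behaviour of $\mu$ encoded in Condition \ref{assumptions_mu}, and the $BV$ bounds of Lemma \ref{unif_BV}.
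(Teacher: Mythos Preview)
Your proof is correct. The reflection-term argument is essentially the same as the paper's. For the drift term you take a slightly different, somewhat cleaner route: you invoke Condition~\ref{assumptions_mu}-(iii) to fix the sign of $\mu^\eps$ near each endpoint, which reduces $[A_r\mu^\eps(X^i_r)]^-$ to $(A_r)^{\pm}|\mu^\eps(X^i_r)|$ and lets you split $A_r$ directly into $X^i_r$ (or $1-X^i_r$), the $q$-increment, and a sign-definite discard. The paper instead first peels off $|q(r)-q(s)||\mu^\eps(X^i_r)|$ via $[a+b]^-\le|a|+[b]^-$ and then handles $[(X^i_r-X^i_s)\mu^\eps(X^i_r)]^-$ by a further case split on the sign of $X^i_r-X^i_s$, using only the one-side Lipschitz property (Condition~\ref{assumptions_mu}-(i), through Remark~\ref{rmk:drift_estimate}) to bound $[\mathrm{sign}(x-1/2)\mu^\eps(x)]^-$ in the favorable case and Condition~\ref{assumptions_mu}-(ii) in the remaining case. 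Your version is shorter, but note that the paper deliberately avoids using Condition~\ref{assumptions_mu}-(iii) here (cf.\ Remarks~\ref{rmk:tech_1} and~\ref{rmk:tech_2}); your argument adds a second place where (iii) enters, which is harmless for correctness but slightly weakens the paper's claim that (iii) is only needed in the limit-identification Lemma~\ref{lemma:BV}.
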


\begin{proof}
We start with the first inequality and we fix $\delta>0$ small, independently of $\eps$ and $N$. Using the elementary inequality $[a+b]^- \le |a|+[b]^-$ for $a,b\in \mathbb{R}$, we have
\begin{align*}
&[(X^i_r-q(r)-X^i_s+q(s))\mu^\eps(X^i_r)]^-\\
&\le |q(r)-q(s)||\mu^\eps(X^i_r)|+[(X^i_r-X^i_s)\mu^\eps(X^i_r)]^-\\
&\le |q(r)-q(s)||\mu^\eps(X^i_r)|+|X^i_r-X^i_s|\max_{[\delta,1-\delta]}|\mu^\eps|\\
&\ \ +(1_{X^i_r<\delta,X^i_r\le X^i_s}+1_{1-\delta<X^i_r,X^i_s\le X^i_r})|X^i_r-X^i_s|[\text{sign}(X^i_r-1/2)\mu^\eps(X^i_r)]^-\\
&\ \ +(1_{X^i_s<X^i_r<\delta}+1_{1-\delta<X^i_r<X^i_s})|X^i_r-X^i_s||\mu^\eps(X^i_r)|.
\end{align*}
For the first added in the RHS, the Lipschitz property of $q$ and Lemma \ref{unif_BV} give
\begin{align*}
\E\left(\int^t_s|q(r)-q(s)||\mu^\eps(X^i_r)|\rmd r\right)^p\le C|t-s|^p\E\left(\int^T_0|\mu^\eps(X^i_r)|\rmd r\right)^p \le C_p|t-s|^p.
\end{align*}
For the second and third addends, we have by Remark \ref{rmk:drift_estimate} (recall $\delta$ is fixed and $X^i$ is in $[0,1]$)
\begin{align*}
&\E\left(\int^t_s|X^i_r-X^i_s|\max_{[\delta,1-\delta]}|\mu^\eps|\rmd r\right)^p \le C_p|t-s|^p,\\
&\E\left(\int^t_s (1_{X^i_r<\delta,X^i_r\le X^i_s}+1_{1-\delta<X^i_r,X^i_s\le X^i_r}) |X^i_r-X^i_s|[\text{sign}(X^i_r-1/2)\mu^\eps(X^i_r)]^-\rmd r\right)^p\le C_p|t-s|^p.
\end{align*}
Concerning the fourth addend, we consider only the case $1-\delta<X^i_r<X^i_s$, the case $X^i_s<X^i_r<\delta$ being completely analogous. By the assumption \ref{assumptions_mu}-(ii) we have
\begin{align*}
1_{1-\delta<X^i_r<X^i_s}|X^i_r-X^i_s||\mu^\eps(X^i_r)|\le C1_{1-\delta<X^i_r<X^i_s}\sup_{1-\delta<x<1}(1-x)|\mu(x)|  \le C,
\end{align*}
Therefore, reasoning similarly for $X^i_s<X^i_r<\delta$, we have
\begin{align*}
\E\left(\int^t_s(1_{1-\delta<X^i_r<X^i_s}+1_{1-\delta<X^i_r<X^i_s})|X^i_r-X^i_s||\mu^\eps(X^i_r)|\rmd r\right)^p\le C_p|t-s|^p.
\end{align*}
Putting all together, we obtain the first estimate.

For the second estimate, recall that $(X^i_r-X^i_s)n(X^i_r)1_{X^i_r\in \partial]0,1[}\ge 0$. Therefore
\begin{align*}
&[(X^i_r-q(r)-X^i_s+q(s))n(X^i_r)]^-|\rmd k^i_r|\\
&= [(X^i_r-q(r)-X^i_s+q(s))n(X^i_r)]^-1_{X^i_r\in\partial]0,1[}|\rmd k^i_r|\\
&\le |q(r)-q(s)||\rmd k^i_r|.
\end{align*}
The Lipschitz property of $q$ and Lemma \ref{unif_BV} give
\begin{align*}
&\E\left(\int^t_s|q(r)-q(s)||\rmd k^i_r|\right)^p\le C|t-s|^p\E\left(\int^T_0|\rmd k^i_r|\right)^p \le C_p|t-s|^p
\end{align*}
and we arrive at the second estimate.
\end{proof}

\begin{remark}\label{rmk:tech_1}
Only in the above proof we use Condition \ref{assumptions_mu}-(ii). If $\mu$ diverged at the boundaries like $x^{-\alpha}$ for some $\alpha>1$, then a similar result to Lemma \ref{Holder_boundary} should hold, but with $(X^i_r-q(r)-X^i_s+q(s))^\alpha$ in place of $(X^i_r-q(r)-X^i_s+q(s))$. However, such result would not be enough, since in the next Lemma \ref{Holder_sum} the power-$1$ factor $(X^i_r-q(r)-X^i_s+q(s))$ appears and is needed to cancel the interaction term when taking the average. We also expect, for $\mu$ diverging like $x^{-\alpha}$ with $\alpha>1$, that the particles should not even touch the boundaries (as it is without interaction), but we do not focus on this point.
\end{remark}

We estimate the H\"older norm of the average of the drift over $i$:

\begin{lemma}\label{Holder_sum}
There exists $0<\alpha\le 1/2$ such that, for every $1\le p<\infty$, it holds, for some $C_p\ge0$ independent of $s,t$,
\begin{align*}
\sup_{N,\eps}\E\left(\frac{1}{N}\sum^N_{i=1}\int^t_s|\mu^\eps(X^{\eps,N,i})|\rmd r\right)^p +\sup_{N,\eps}\E\left(\frac{1}{N}\sum^N_{i=1}\int^t_s|\rmd k^{\eps,N,i}_r|\right)^p\le C_p|t-s|^{\alpha p}.
\end{align*}
\end{lemma}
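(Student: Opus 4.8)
Fix $0\le s<t\le T$. The idea is to repeat the argument behind Lemma~\ref{BV_sum}, but with the time-increment $Z^i_r:=X^i_r-q(r)-X^i_s+q(s)$, which vanishes at $r=s$ and satisfies the same equation as $X^i-q$. Applying It\^o's formula to $|Z^i_r|^2$ and averaging over $i$, the constraint $\frac1N\sum_iX^i_r=q(r)$ forces $\frac1N\sum_iZ^i_r=0$, so \emph{all} the mean-field interaction terms disappear, exactly as in \eqref{eq:Ito_square}, leaving
\begin{align*}
\frac1N\sum_i|Z^i_t|^2
&=-2\frac1N\sum_i\int_s^tZ^i_r\mu^\eps(X^i_r)\,\rmd r+2\sigma\frac1N\sum_i\int_s^tZ^i_r\,\rmd W^i_r\\
&\quad+\sigma^2(1-\tfrac1N)(t-s)-2\frac1N\sum_i\int_s^tZ^i_rn(X^i_r)\,\rmd|k^i|_r.
\end{align*}
I would split $Z^i_r\mu^\eps(X^i_r)$ and $Z^i_rn(X^i_r)$ into positive and negative parts, move the negative parts to the right, and bound them in $L^p$ by $C_p|t-s|^p$ using Lemma~\ref{Holder_boundary} (with Jensen's inequality to pass from the per-$i$ bound to the average); the stochastic term is bounded in $L^p$ by $C_p|t-s|^{p/2}$ by Burkholder--Davis--Gundy, the independence of the $W^i$ and the crude bound $|Z^i_r|\le2$, while the bracket term is $\le C|t-s|$. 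Since the left-hand side and the two remaining positive terms are nonnegative, this yields the \emph{weighted} estimate
\begin{align*}
\E\Big(\frac1N\sum_i\int_s^t|Z^i_r|\,|\mu^\eps(X^i_r)|\,\rmd r\Big)^p+\E\Big(\frac1N\sum_i\int_s^t|Z^i_r|\,|\rmd k^i_r|\Big)^p\le C_p\,|t-s|^{p/2}.
\end{align*}

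\textbf{Removing the weight $|Z^i_r|$.}
Fix $\delta\in(0,\rho)$, later chosen as a small power of $|t-s|$, and decompose $\int_s^t|\mu^\eps(X^i_r)|\,\rmd r$ (and likewise $\int_s^t|\rmd k^i_r|$) according to the positions of $X^i_r$ and $X^i_s$: (a) \emph{bulk}, $X^i_r\in[\delta,1-\delta]$, where Condition~\ref{assumptions_mu}-(ii) gives $|\mu^\eps(X^i_r)|\le C/\delta$, hence a contribution $\le C|t-s|/\delta$; (b) \emph{boundary with $X^i_s$ away from it}, $X^i_r\notin[\delta,1-\delta]$ but $X^i_s\in[2\delta,1-2\delta]$, where the path has moved by at least $\delta$, so (for $|t-s|$ small, $q$ being Lipschitz) $|Z^i_r|\ge\delta/2$ and the weighted estimate controls this term by $C_p|t-s|^{p/2}/\delta^p$ in $L^p$; (c) \emph{boundary with $X^i_s$ also near it}, $X^i_r\notin[\delta,1-\delta]$ and $X^i_s\notin[2\delta,1-2\delta]$. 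For (c) near the endpoint $0$ I would apply It\^o's formula to $g(X^i_r)$ with a fixed $C^2$ function $g$ satisfying $g'\ge0$, $g(x)=x$ near $0$, $g$ constant on $[\delta,\infty)$ and $|g''|\le C/\delta$: by Condition~\ref{assumptions_mu}-(iii) the drift produces the nonnegative term $g'(X^i_r)|\mu^\eps(X^i_r)|$, the reflection at $0$ produces $g'(0)\,\rmd|k^i|_r\ge0$ (the one at $1$ does not contribute since $g'$ vanishes near $1$), the second-order term is $\le C|t-s|/\delta$, the martingale term is $O(|t-s|^{1/2})$ in $L^p$, the $\dot q$-term is $\le C|t-s|$, and the interaction terms are bounded by $\frac1N\sum_j\int_s^t|\mu^\eps(X^j_r)|\,\rmd r$ and $\frac1N\sum_j\int_s^t|\rmd k^j_r|$; the endpoint $1$ is symmetric.

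\textbf{Closing the estimate.}
Summing (a)--(c) over $i$ and dividing by $N$, the crucial point is that the interaction terms produced by (c) reappear as a multiple of the very quantity being estimated, with multiplicative factor equal to the fraction of indices with $X^i_s\notin[2\delta,1-2\delta]$; by the constraint $\frac1N\sum_iX^i_s=q(s)$ together with Condition~\ref{assumptions_q} this fraction stays bounded away from $1$ (up to an $O(\delta)$ error), so these contributions can be absorbed into the left-hand side. Choosing $\delta$ a suitable positive power of $|t-s|$, so as to balance the terms $\delta$, $|t-s|/\delta$ and $|t-s|^{1/2}/\delta$, then gives
\begin{align*}
\E\Big(\frac1N\sum_i\int_s^t|\mu^\eps(X^i_r)|\,\rmd r\Big)^p+\E\Big(\frac1N\sum_i\int_s^t|\rmd k^i_r|\Big)^p\le C_p\,|t-s|^{\alpha p}
\end{align*}
for some $0<\alpha\le1/2$ when $|t-s|$ is small, the range $|t-s|$ of order $1$ being covered by Lemma~\ref{BV_sum}.

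\textbf{Main obstacle.}
The hard part is step (c) together with the absorption in the closing step: near the boundary the (possible) singularity of $\mu$, the reflection, and the mean-field interaction all act at once, and — unlike in the first step — the interaction terms do \emph{not} cancel upon averaging, so they must be reabsorbed through the mean constraint, and it is exactly this loss that forbids taking $\alpha=1/2$. If one is content with some $\alpha>0$, a technically lighter route is to replace the It\^o argument in (c) by occupation-time/local-time bounds for $X^i$ near $\{0,1\}$, which again yield $O(|t-s|^{1/2})$ estimates in $L^p$.
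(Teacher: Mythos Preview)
Your first step --- applying It\^o to $|Z^i_r|^2$ with $Z^i_r=X^i_r-q(r)-X^i_s+q(s)$, averaging over $i$ so that the interaction terms cancel, and using Lemma~\ref{Holder_boundary} plus BDG --- is exactly what the paper does. The divergence, and the problem, is in how you remove the weight.

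\textbf{The gap.} Your absorption argument in the closing step does not work as stated. You claim that after summing case~(c) over $i\in I_c:=\{i:X^i_s\notin[2\delta,1-2\delta]\}$ and dividing by $N$, the interaction terms reappear with multiplicative factor $|I_c|/N$, and that the constraint $\frac1N\sum_iX^i_s=q(s)\in[\xi,1-\xi]$ forces this fraction to be bounded away from~$1$. This is false: take $q(s)=1/2$ and half the particles at $0$, half at $1$; then $|I_c|/N=1$ and nothing can be absorbed. The mean constraint bounds the fraction of particles near a \emph{single} endpoint away from~$1$ (by roughly $1-\xi$), but not the total fraction near the union $\{0,1\}$. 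One might try to exploit that the cutoff functions $g_0$ (near $0$) and $g_1$ (near $1$) have derivatives of opposite sign, so that the interaction contributions from $I_c^0$ and $I_c^1$ partially cancel; but this is a different and more delicate argument than the one you wrote, and it has its own issues (the sign of $\rmd K^N$ is not fixed, and a given particle can visit both endpoints within $[s,t]$).

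\textbf{What the paper does instead.} The paper avoids the whole boundary decomposition by a two-line trick. From the bound $\E(\tfrac1N\sum_i|Z^i_t|^2)^p\le C_p|t-s|^{p/2}$ (your first step), Cauchy--Schwarz on the identity $a^2-b^2=(a-b)(a+b)$ gives a H\"older bound on the \emph{increment of the unshifted energy}
\[
\E\Big|\tfrac1N\sum_{i}\big(|X^i_t-q(t)|^2-|X^i_s-q(s)|^2\big)\Big|^p\le C_p|t-s|^{p/4}.
\]
Now feed this into the \emph{unshifted} averaged It\^o formula \eqref{eq:Ito_square}: there the weight is $(X^i_r-q(r))$, not $Z^i_r$, and by Remark~\ref{rmk:drift_estimate} this weight is bounded below by $\xi$ at the boundary (since $q(r)\in[\xi,1-\xi]$), so that $(X^i_r-q(r))\mu^\eps(X^i_r)\ge\tfrac{\xi}{2}|\mu^\eps(X^i_r)|-C$ and $(X^i_r-q(r))\rmd k^i_r\ge\xi\,\rmd|k^i_r|$. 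This immediately yields the unweighted estimate with $\alpha=1/4$, with no $\delta$-decomposition, no localisation, and no absorption. The point you missed is that you do not need to remove the vanishing weight $Z^i_r$; you can instead transfer the H\"older regularity to the process $\tfrac1N\sum_i|X^i-q|^2$, whose It\^o expansion carries a weight that is uniformly bounded below where it matters.
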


\begin{proof}
We start estimating the H\"older norm of $\frac{1}{N}\sum^N_{i=1}|X^i-q|^2$. For this we fix $s$ and we have, for $t>s$,
\begin{align}
&\rmd |X^i_t-q(t)-X^i_s+q(s)|^2\label{Holder_eq}\\
&= 2(X^i_t-q(t)-X^i_s+q(s))(-\mu^\eps(X^i_t)+\frac{1}{N}\sum^N_{i=1}\mu^\eps(X^j_t))\rmd t\nonumber\\
&\ \ +2\sigma(X^i_t-q(t)-X^i_s+q(s))(\rmd W^i_t-\frac{1}{N}\sum^N_{j=1}\rmd W^j_t) +\sigma^2(1-\frac{1}{N})\rmd t\nonumber\\
&\ \ +2(X^i_t-q(t)-X^i_s+q(s))(-\rmd k^i_t+\frac{1}{N}\sum^N_{j=1}\rmd k^j_t).\nonumber
\end{align}
Similarly to the argument in Lemma \ref{BV_sum}, averaging over $i$ we get rid of the interaction terms $\frac{1}{N}\sum^N_{i=1}\mu^\eps(X^j_t)$, $\frac{1}{N}\sum^N_{j=1}\rmd W^j_t$ and $\frac{1}{N}\sum^N_{j=1}\rmd k^j_t$:
\begin{align*}
&\rmd \frac{1}{N}\sum^N_{i=1}|X^i_t-q(t)-X^i_s+q(s)|^2\\
&= -2\frac{1}{N}\sum^N_{i=1}(X^i_t-q(t)-X^i_s+q(s))\mu^\eps(X^i_t)\rmd t\\
&\ \ +2\sigma\frac{1}{N}\sum^N_{i=1}(X^i_t-q(t)-X^i_s+q(s))\rmd W^i_t +\sigma^2(1-\frac{1}{N})\rmd t -2(X^i_t-q(t)-X^i_s+q(s))\rmd k^i_t.
\end{align*}
We take the $p$-power and obtain
\begin{align*}
&\left(\frac{1}{N}\sum^N_{i=1}|X^i_t-q(t)-X^i_s+q(s)|^2\right)^p\\
&\le C_p\left(\frac{1}{N}\sum^N_{i=1}\int^t_s[(X^i_r-q(r)-X^i_s+q(s))\mu^\eps(X^i_r)]^-\rmd r\right)^p\\
&\ \ +C_p\sigma^p\left|\frac{1}{N}\sum^N_{i=1}\int^t_s(X^i_r-q(r)-X^i_s+q(s))\rmd W^i_r\right|^p +C_p\sigma^{2p}|t-s|^p\\
&\ \ +C_p\left(\frac{1}{N}\sum^N_{i=1}\int^t_s[(X^i_r-q(r)-X^i_s+q(s))n(X^i_r)]^-|\rmd k^i_r|\right)^p.
\end{align*}
The first addend of the RHS is controlled via Lemma \ref{Holder_boundary} and Jensen inequality (applied to the average over $i$):
\begin{align*}
&\E\left(\frac{1}{N}\sum^N_{i=1}\int^t_s[(X^i_r-q(r)-X^i_s+q(s))\mu^\eps(X^i_r)]^-\rmd r\right)^p\\
&\le \sup_{N,\eps,i}\E\left(\int^t_s[(X^i_r-q(r)-X^i_s+q(s))\mu^\eps(X^i_r)]^-\rmd r\right)^p \le C_p|t-s|^{p}.
\end{align*}
Similarly for the last addend. The second addend is controlled via Burkholder-Davis-Gundy inequality and Jensen inequality:
\begin{align*}
&\E\left|\frac{1}{N}\sum^N_{i=1}\int^t_s(X^i_r-q(r)-X^i_s+q(s))\rmd W^i_r\right|^p\\
&\le \sup_{N,\eps,i}\E\left|\int^t_s(X^i_r-q(r)-X^i_s+q(s))\rmd W^i_r\right|^p \le C_p\sup_{N,\eps,i}\E\left(\int^t_s|X^i_r-q(r)-X^i_s+q(s)|^2\rmd r\right)^{p/2}\\
&\le C_p\sup_{N,\eps,i}\E\sup_t|X^i_t-q(t)|^p|t-s|^{p/2} \le C_p|t-s|^{p/2}
\end{align*}
Therefore we have
\begin{align*}
\E\left(\frac{1}{N}\sum^N_{i=1}|X^i_t-q(t)-X^i_s+q(s)|^2\right)^p\le C_p|t-s|^{p/2}.
\end{align*}
Now we recall the following elementary inequality (consequence of Cauchy-Schwarz inequality), for every two sequences of real numbers $a_i$, $b_i$:
\begin{align*}
\left|\frac{1}{N}\sum^N_{i=1}(a_i^2-b_i^2)\right| \le \left(\frac{1}{N}\sum^N_{i=1}|a_i-b_i|^2\right)^{1/2}\left(\frac{1}{N}\sum^N_{i=1}|a_i+b_i|^2\right)^{1/2}.
\end{align*}
Applying this inequality to $a_i=X^i_t-q(t)$, $b_i=X^i_s-q(s)$ and using Jensen inequality, we get the H\"older bound on $\frac{1}{N}\sum^N_{i=1}|X^i-q|^2$:
\begin{align}
&\E\left|\frac{1}{N}\sum^N_{i=1}|X^i_t-q(t)|^2-|X^i_s-q(s)|^2\right|^p\nonumber\\
&\le \E\left[\left(\frac{1}{N}\sum^N_{i=1}|X^i_t-q(t)-X^i_s+q(s)|^2\right)^{p/2}\left(\frac{1}{N}\sum^N_{i=1}|X^i_t-q(t)+X^i_s-q(s)|^2\right)^{p/2}\right]\nonumber\\
&\le \left(\E\left(\frac{1}{N}\sum^N_{i=1}|X^i_t-q(t)-X^i_s+q(s)|^2\right)^p\right)^{1/2}\left(\E\frac{1}{N}\sum^N_{i=1}|X^i_t-q(t)+X^i_s-q(s)|^{2p}\right)^{1/2}\nonumber\\
&\le C_p|t-s|^{p/4} \sup_{N,\eps,i}(\E\sup_t|X^i_t-q(t)|^{2p})^{1/2}\le C_p|t-s|^{p/4}.\label{Holder_averX}
\end{align}
On the other hand, averaging \eqref{eq:Ito_square} and using again the cancellation of the interaction terms, we get the equation for $\frac{1}{N}\sum^N_{i=1}|X^i-q|^2$:
\begin{align*}
&\frac{1}{N}\sum^N_{i=1}|X^i_t-q(t)|^2-|X^i_s+q(s)|^2\\
&= -2\frac{1}{N}\sum^N_{i=1}\int^t_s(X^i_r-q(r))\mu^\eps(X^i_r)\rmd r +2\frac{1}{N}\sum^N_{i=1}\int^t_s\sigma(X^i_r-q(r))\rmd W^i_r\\
&\ \ +\sigma^2(1-\frac{1}{N})(t-s) -2\frac{1}{N}\sum^N_{i=1}\int^t_s(X^i_r-q(r))\rmd k^i_r,
\end{align*}
and so, by Remark \ref{rmk:drift_estimate}, we obtain
\begin{align*}
&\E\left(\frac{1}{N}\sum^N_{i=1}\int^t_s|\mu^\eps(X^i_r)|\rmd r\right)^p +\E\left(\frac{1}{N}\sum^N_{i=1}\int^t_s|\rmd k^i_r|\right)^p\\
&\le C_p \E\left(\frac{1}{N}\sum^N_{i=1}\int^t_s(X^i_r-q(r))\mu^\eps(X^i_r)\rmd r\right)^p +\E\left(\frac{1}{N}\sum^N_{i=1}\int^t_s(X^i_r-q(r))\rmd k^i_r\right)^p +C_p(t-s)^p\\
&\le C_p\left|\frac{1}{N}\sum^N_{i=1}|X^i_t-q(t)|^2-|X^i_s+q(s)|^2\right|^p +C_p\left|\frac{1}{N}\sum^N_{i=1}\int^t_s\sigma(X^i_r-q(r))\rmd W^i_r\right|^p +C_p(t-s)^p.
\end{align*}
We control the first addend in the RHS by \eqref{Holder_averX} and the second addend by Burkholder-Davis-Gundy inequality (and Jensen inequality on the average over $i$):
\begin{align*}
&\E\left(\frac{1}{N}\sum^N_{i=1}\int^t_s|\mu^\eps(X^i_r)|\rmd r\right)^p +\E\left(\frac{1}{N}\sum^N_{i=1}\int^t_s|\rmd k^i_r|\right)^p\\
&\le C_p|t-s|^{p/4} + C_p\sup_{N,\eps,i}(\E\sup_t|X^i_r-q(t)|^p)|t-s|^{p/2} +C_p(t-s)^p \le C_p|t-s|^{p/4},
\end{align*}
which is the desired estimate with $\alpha =1/4$.
\end{proof}

Now we can prove the uniform H\"older bound:

\begin{lemma}\label{uniform_Holder}
With the notation of the previous Lemma, for every $1\le p<\infty$, it holds, for some $C_p\ge0$ independent of $s,t$,
\begin{align*}
\sup_{N,\eps,i}\E|X^i_t-X^i_s|^p\le C_p|t-s|^{\alpha p/2}.
\end{align*}
\end{lemma}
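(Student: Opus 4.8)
The plan is to run It\^o's formula on $|X^i_t-q(t)-X^i_s+q(s)|^2$ for a \emph{fixed} index $i$ --- that is, to integrate the differential identity \eqref{Holder_eq} from $s$ to $t$, but this time \emph{without} averaging over $i$, so that the interaction terms $\frac1N\sum_j\mu^\eps(X^j_r)$, $\frac1N\sum_j\rmd W^j_r$ and $\frac1N\sum_j\rmd k^j_r$ are retained. Raising the resulting identity to the $p$-th power and taking expectations, one is left with controlling, in $L^p$, four kinds of contributions: (a) the diagonal drift term $-2\int_s^t(X^i_r-q(r)-X^i_s+q(s))\mu^\eps(X^i_r)\,\rmd r$; (b) the diagonal reflection term $-2\int_s^t(X^i_r-q(r)-X^i_s+q(s))\,\rmd k^i_r$; (c) the interaction terms involving $\frac1N\sum_j\mu^\eps(X^j_r)$ and $\frac1N\sum_j\rmd k^j_r$; and (d) the stochastic integral against $\rmd W^i_r$, the interaction stochastic integral against $\frac1N\sum_j\rmd W^j_r$, and the bracket term $\sigma^2(1-\tfrac1N)(t-s)$.

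For (a) and (b) I would use the pointwise bounds $-ab\le[ab]^-$ (and $\rmd k^i_r=n(X^i_r)\rmd|k^i|_r$ for (b)) to reduce them to the quantities estimated in Lemma \ref{Holder_boundary}, which gives $C_p|t-s|^p$ for both. For (c) I would use only the deterministic bound $|X^i_r-q(r)-X^i_s+q(s)|\le 2$ to dominate, e.g., $\int_s^t(X^i_r-q(r)-X^i_s+q(s))\frac1N\sum_j\mu^\eps(X^j_r)\,\rmd r$ by $2\int_s^t\frac1N\sum_j|\mu^\eps(X^j_r)|\,\rmd r$, whose $p$-th moment is $\le C_p|t-s|^{\alpha p}$ by Lemma \ref{Holder_sum}; the $\rmd k^j$ interaction is identical. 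For (d), Burkholder--Davis--Gundy together with $|X^i_r-q(r)-X^i_s+q(s)|\le 2$ gives $C_p|t-s|^{p/2}$. Collecting everything and using $|t-s|\le T$ so that the smallest exponent $\alpha p=p/4$ dominates, I obtain $\E|X^i_t-q(t)-X^i_s+q(s)|^{2p}\le C_p|t-s|^{\alpha p}$.

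Finally, by Jensen's (power-mean) inequality this yields $\E|X^i_t-q(t)-X^i_s+q(s)|^{p}\le C_p|t-s|^{\alpha p/2}$, and since $q$ is Lipschitz, $\E|X^i_t-X^i_s|^p\le C_p\E|X^i_t-q(t)-X^i_s+q(s)|^p+C_p|q(t)-q(s)|^p\le C_p|t-s|^{\alpha p/2}$, which is the claim. I do not expect a genuine obstacle: the heavy lifting --- the uniform $BV$ bounds, the one-sided control of the boundary terms, and the H\"older control of the particle average --- has already been carried out in Lemmas \ref{unif_BV}, \ref{Holder_boundary} and \ref{Holder_sum}. The only points requiring care are the sign bookkeeping in (a)--(b) and keeping track that It\^o's formula on the square produces the $2p$-th moment, which is precisely why the final exponent is $\alpha p/2$ rather than $\alpha p$.
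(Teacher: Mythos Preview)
Your proposal is correct and follows essentially the same route as the paper's proof: integrate \eqref{Holder_eq} for a fixed $i$, bound the diagonal drift and reflection terms via Lemma \ref{Holder_boundary}, the interaction terms via Lemma \ref{Holder_sum}, and the martingale term via Burkholder--Davis--Gundy, then finish with the Lipschitz property of $q$. The only cosmetic difference is that the paper first restricts to $p\ge2$ (by Jensen) and raises the squared identity to the power $p/2$ to land directly on $\E|X^i_t-q(t)-X^i_s+q(s)|^{p}$, whereas you raise to the power $p$ and then use Jensen at the end --- a harmless reparametrization.
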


\begin{proof}
By Jensen inequality, it is enough to prove the estimate for $p\ge 2$. We start again with the equation \eqref{Holder_eq} for a fixed $i$. Taking the $p/2$-power we obtain
\begin{align*}
&|X^i_t-q(t)-X^i_s+q(s)|^p\\
&\le C_p\left(\int^t_s[(X^i_r-q(r)-X^i_s+q(s))\mu^\eps(X^i_r)]^-\rmd r\right)^{p/2}\\
&\ \ +C_p\left(\int^t_s|X^i_r-q(r)-X^i_s+q(s)|\frac{1}{N}\sum^N_{j=1}|\mu^\eps(X^j_t)|\rmd r\right)^{p/2}\\
&\ \ +C_p\sigma^p\left|\int^t_s(X^i_r-q(r)-X^i_s+q(s))\rmd(W^i_r-\frac{1}{N}\sum^N_{j=1}W^j_r)\right|^{p/2} +C_p\sigma^{2p}|t-s|^{p/2}\\
&\ \ +C_p\left(\int^t_s[(X^i_r-q(r)-X^i_s+q(s))n(X^i_r)]^-|\rmd k^i_r|\right)^{p/2}\\
&\ \ +C_p\left(\int^t_s|X^i_r-q(r)-X^i_s+q(s)|\frac{1}{N}\sum^N_{j=1}|\rmd k^j_r|\right)^{p/2}
\end{align*}
The first addend of the RHS is controlled again via Lemma \ref{Holder_boundary}:
\begin{align*}
\E\left(\int^t_s[(X^i_r-q(r)-X^i_s+q(s))\mu^\eps(X^i_r)]^-\rmd r\right)^{p/2}\le C_p|t-s|^{p/2}.
\end{align*}
Similarly for the fourth addend. The previous Lemma \ref{Holder_sum} allows to control the second addend:
\begin{align*}
&\E\left(\int^t_s|X^i_r-q(r)-X^i_s+q(s)|\frac{1}{N}\sum^N_{j=1}|\mu^\eps(X^j_t)|\rmd r\right)^{p/2}\\
&\le C_p\E\left(\int^t_s\frac{1}{N}\sum^N_{j=1}|\mu^\eps(X^j_t)|\rmd r\right)^{p/2}\le C_p|t-s|^{\alpha p/2}
\end{align*}
Similarly for the fifth addend. The third addend is controlled via Burkholder-Davis-Gundy inequality:
\begin{align*}
&\E\left(\int^t_s(X^i_r-q(r)-X^i_s+q(s))\rmd(W^i_r-\frac{1}{N}\sum^N_{j=1}W^j_r)\right)^{p/2}\\
&\le C_p\sup_{N,\eps,i}\E\left(\int^t_s|X^j_r-q(r)-X^i_s+q(s)|^2\rmd r\right)^{p/4}\le C_p|t-s|^{p/4}
\end{align*}
Putting all together we get
\begin{align*}
\E|X^i_t-q(t)-X^i_s+q(s)|^p\le C_p|t-s|^{\alpha p/2}.
\end{align*}
Using the Lipschitz continuity of $q$, we obtain the desired bound.
\end{proof}

\begin{remark}
We have shown that the H\"older exponent is $\alpha/2=1/8$ (since we can take $\alpha=1/4$). This is a consequence of our argument, but we expect that the optimal H\"older exponent is still $1/2$. 
\end{remark}

We conclude with a H\"older estimate on the total variation of the drift:

\begin{lemma}\label{unif_Holder_drift}
For every $1\le p<\infty$, it holds for some $C_p\ge0$ independent of $s,t$
\begin{align*}
\sup_{N,\eps,i}\E\left(\int^t_s|\mu^\eps(X^{\eps,N,i}_r)|\rmd r\right)^p +\sup_{N,\eps,i}\E\left(\int^t_s|\rmd k^{\eps,N,i}_r|\right)^p \le C |t-s|^{\alpha p/2}.
\end{align*}
\end{lemma}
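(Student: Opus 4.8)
The plan is to repeat the argument used for the uniform $BV$ bound in Lemma~\ref{unif_BV}, but localised to a generic sub-interval $[s,t]$, and with the interaction terms estimated by means of Lemma~\ref{Holder_sum} in place of Lemma~\ref{BV_sum}. Concretely, I would fix $i$, integrate the It\^o identity \eqref{eq:Ito_square} for $|X^i_r-q(r)|^2$ over $[s,t]$, and apply the two pointwise lower bounds recorded in Remark~\ref{rmk:drift_estimate}, namely $(x-q(r))\mu^\eps(x)\ge\frac{\xi}{2}|\mu^\eps(x)|-C$ and $(X^i_r-q(r))\,\rmd k^i_r\ge\xi\,\rmd|k^i|_r$. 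Isolating the two good terms on the left and bounding everything else in absolute value, this produces an inequality of the shape
\begin{align*}
\xi\int^t_s|\mu^\eps(X^i_r)|\,\rmd r+2\xi\int^t_s\rmd|k^i|_r\le{}&\big(|X^i_s-q(s)|^2-|X^i_t-q(t)|^2\big)+C(t-s)\\
&+2\int^t_s|X^i_r-q(r)|\,\frac1N\sum^N_{j=1}|\mu^\eps(X^j_r)|\,\rmd r+2\int^t_s|X^i_r-q(r)|\,\frac1N\sum^N_{j=1}\rmd|k^j|_r\\
&+2\sigma\Big|\int^t_s(X^i_r-q(r))\Big(\rmd W^i_r-\frac1N\sum^N_{j=1}\rmd W^j_r\Big)\Big|.
\end{align*}

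Next I would raise this to the $p$-th power, take expectations, and estimate the right-hand side term by term. The signed term $|X^i_s-q(s)|^2-|X^i_t-q(t)|^2$ is rewritten as $(X^i_s-q(s)-X^i_t+q(t))(X^i_s-q(s)+X^i_t-q(t))$; using $0\le X^i\le 1$, $0\le q\le 1$, the uniform H\"older bound of Lemma~\ref{uniform_Holder} and the Lipschitz continuity of $q$, its $L^p$-norm is $O(|t-s|^{\alpha/2})$. The term $C(t-s)$ is $O(|t-s|)$. For the two interaction integrals I would bound $|X^i_r-q(r)|\le 1$ and invoke Lemma~\ref{Holder_sum}, which gives $\E\big(\frac1N\sum_j\int^t_s|\mu^\eps(X^j_r)|\rmd r\big)^p$ and $\E\big(\frac1N\sum_j\int^t_s\rmd|k^j|_r\big)^p$ of order $|t-s|^{\alpha p}$. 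The stochastic integral is dealt with by the Burkholder--Davis--Gundy inequality together with $|X^i-q|\le 1$ and the fact that the quadratic variation of $W^i-\frac1N\sum_jW^j$ equals $(1-\frac1N)\rmd r\le\rmd r$, yielding order $|t-s|^{p/2}$. Since $\alpha\le 1/2$ and $0\le t-s\le T$, each of these powers of $|t-s|$ is, after raising to the $p$-th power and up to a constant depending on $p$ and $T$, dominated by $|t-s|^{\alpha p/2}$, the binding contribution being the $|X^i_t-q(t)|^2$ difference. Finally, as $\int^t_s|\mu^\eps(X^i_r)|\rmd r$ and $\int^t_s\rmd|k^i|_r$ are both nonnegative, a bound on the $p$-th power of their weighted sum bounds the $p$-th power of each summand separately, which is exactly the asserted estimate.

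I do not expect a substantial obstacle here: the statement is essentially a repackaging of the H\"older bounds established in the preceding lemmas. The only point requiring a little care is that the term $|X^i_s-q(s)|^2-|X^i_t-q(t)|^2$ must be shown to be \emph{small} in $|t-s|$ — a mere constant bound, as would suffice for the pure $BV$ estimate of Lemma~\ref{unif_BV}, is not enough here — which forces the use of the path regularity of $X^i$ already obtained in Lemma~\ref{uniform_Holder}. One should also note that Lemma~\ref{uniform_Holder} is indeed available at this stage, since its proof only relies on Lemmas~\ref{Holder_boundary} and \ref{Holder_sum}, not on the present lemma.
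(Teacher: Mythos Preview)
Your proposal is correct and follows essentially the same route as the paper: both start from the It\^o identity \eqref{eq:Ito_square} integrated over $[s,t]$, isolate the two positive terms via Remark~\ref{rmk:drift_estimate}, factor the boundary term $|X^i_s-q(s)|^2-|X^i_t-q(t)|^2$ as a product and bound it using Lemma~\ref{uniform_Holder}, control the interaction sums by Lemma~\ref{Holder_sum}, and handle the stochastic integral by Burkholder--Davis--Gundy. Your discussion of why the $|X^i-q|^2$-increment is the binding term and why Lemma~\ref{uniform_Holder} is already available is exactly the right diagnosis.
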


\begin{proof}
The equation \eqref{eq:Ito_square} for $|X_t-q(t)|^2$ implies
\begin{align*}
&\int^t_s 2(X^i-q(r))\mu^\eps(X^i)\rmd r +\int^t_s 2(X^i-q(r))\rmd k^i_r\\
&\le |X^i_s-q(s)|^2-|X^i_t-q(t)|^2 +C\int^t_s\frac{1}{N}\sum^N_{j=1}|\mu^\eps(X^j)|\rmd r\\
&\ \ +\left|\int^t_s2\sigma(X^i-q(t))(\rmd W^i-\frac{1}{N}\sum^N_{j=1}\rmd W^j)\right| +\sigma^2 (t-s) +C\int^t_s\frac{1}{N}\sum^N_{i=1}|\rmd k^j_r|.
\end{align*}
and so, by Remark \ref{rmk:drift_estimate},
\begin{align*}
&\int^t_s|\mu^\eps(X^i)|\rmd r +\int^t_s|\rmd k^{\eps,N,i}_r|\\
&\le C|X^i_s-X^i_t-q(s)+q(t)||X^i_s+X^i_t-q(s)-q(t)| +C\int^t_s\frac{1}{N}\sum^N_{j=1}|\mu^\eps(X^j)|\rmd r\\
&\ \ +C\left|\int^t_s\sigma(X^i-q(t))(\rmd W^i-\frac{1}{N}\sum^N_{j=1}\rmd W^j)\right| +C\sigma^2 (t-s) +C\int^t_s\frac{1}{N}\sum^N_{j=1}|\rmd k^j_r|.
\end{align*}
By Burkholder-Davis-Gundy inequality, we get
\begin{align*}
&\E\left(\int^t_s|\mu^\eps(X^i)|\rmd r\right)^p +\E\left(\int^t_s|\rmd k^{\eps,N,i}_r|\right)^p\\
&\le C\E|X^i_s-X^i_t-q(s)+q(t)|^{p} +C\E\left(\int^t_s\frac{1}{N}\sum^N_{j=1}|\mu^\eps(X^j)|\rmd r\right)^p\\
&\ \ +C(t-s)^p +C\E\left(\int^t_s\frac{1}{N}\sum^N_{j=1}|\rmd k^j_r|\right)^p.
\end{align*}
Lemma \ref{Holder_sum} and Lemma \ref{uniform_Holder} allow to conclude the desired bound.
\end{proof}

\subsection{Convergence of the particle system}

In this Subsection we show the convergence of the regularized particle system \eqref{SDE_single_reg} to the McKean-Vlasov SDE \eqref{meanfield_SDE_single}, by a compactness argument; as a consequence, we get the existence of a solution to \eqref{meanfield_SDE_single}. We are given a probability space $(\Omega,\mc{A},P)$ and independent Brownian motions $W^i$, $i\ge 1$, on a (right-continuous complete) filtration $(\mc{F}_t)_t$. For each $N$, we are given $(X^{1,N}_0,\ldots X^{N,N}_0)$ $\mc{F}_0$-measurable random variable and we let $(X^{i,N,\eps},k^{i,N,\eps})$ be the corresponding solution to the regularized $N$-particle system \eqref{SDE_single_reg}. Through all the section, we assume Conditions \ref{assumptions_mu}, \ref{assumptions_q} and \ref{assumptions_x0N}.

%

In the following we use the notation $C_t=C([0,T];\mr^d)$, $C_{t,[0,1]}=C([0,T];[0,1])$; we use also $W_t^{\beta,p}=W^{\beta,p}([0,T])$ for the fractional Sobolev space of order $0<\beta<1$ and exponent $1\le p<\infty$, with norm
\begin{align*}
\|f\|_{W_t^{\beta,p}}^p = \int^T_0 |f(t)|^p\rmd t + \int^T_0\int^T_0\frac{|f(t)-f(s)|^p}{|t-s|^{1+\beta p}}\rmd s\rmd t.
\end{align*}
We consider the Polish space $E=C_t\times C_{t,[0,1]}\times C_t$, endowed with its Borel $\sigma$-algebra. We denote a generic element of $E$ as $\gamma=(\gamma^1,\gamma^2,\gamma^3)$ or (for reasons that will be clear later) $(W,X,Z)$; with a little abuse of notation, we use
\begin{align*}
W,X,Z
\end{align*}
also to denote the canonical projections on $E$. We consider also the space $\mc{P}(E)$ of probability measures on $E$ with the topology of weak convergence of probability measure (also endowed with its Borel $\sigma$-algebra). The space $\mc{P}(E)$ is a Polish space as well \cite[Remark 7.1.7]{ambrosio2008gradient}. For a measure $\nu$ and a function $g$ on $E$, we use the notation $\nu(g) = \int_E g\rmd \mu$ (when the integral makes sense).

We take the random empirical measures on $E$ given by
\begin{align*}
L^{N,\eps} = \frac{1}{N}\sum^N_{i=1} \delta_{(W^i(\omega),X^{i,N,\eps}(\omega),-\int^\cdot_0\mu^\eps(X^{i,N,\eps}_r)\rmd r-k^{i,N,\eps})},
\end{align*}
which are random variables on $\mc{P}(E)$. Note that, for $\omega$ in $\Omega$, for any Borel bounded or non-negative function $g:E\to \mr$,
\begin{align*}
\E^{L^{N,\eps}(\omega)}[g(W,X,Z)] = \frac{1}{N}\sum_{i=1}^N g\left(W^i(\omega),X^{i,N,\eps}(\omega),-\int^\cdot_0\mu^\eps(X^{i,N,\eps}_r)\rmd r-k^{i,N,\eps}\right),
\end{align*}
where $\E^{L^{N,\eps}}$ is the expectation under $L^{N,\eps}$. By equation \eqref{SDE_single_reg} and the definition of $L^{\eps,N}(\omega)$, for $P$-a.e.~$\omega$, under the measure $L^{N,\eps}(\omega)$ it holds on $E$: for every $t$,
\begin{align}
\begin{aligned}\label{eq:particle_equiv}
&X_t= X_0 +q(t)-q(0) +\sigma W_t +Z_t -\sigma\E^{L^{N,\eps}}[W_t] -\E^{L^{N,\eps}}[Z_t],\\
&Z_t= -\int_0^t \mu^\eps(X_r)\rmd r -k_t,\\
&\rmd|k|_t = 1_{X_t\in \{0,1\}}\rmd|k|_t,\ \ \rmd k_t = n(X_t)\rmd|k|_t.
\end{aligned}
\end{align}


\begin{proposition}
Assume Conditions \ref{assumptions_mu}, \ref{assumptions_q} and \ref{assumptions_x0N} (actually, Condition \ref{assumptions_mu}-(iii) is not needed). Then the family $(\text{Law}(L^{N,\eps}))_{N,\eps}$ (probability measures on $\mc{P}(E)$) is tight.
\end{proposition}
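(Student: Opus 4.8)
The plan is to prove tightness of $(\text{Law}(L^{N,\eps}))_{N,\eps}$ on $\mathcal{P}(\mathcal{P}(E))$ by reducing it, via a standard criterion, to tightness of the laws of the individual components of the empirical measure, and then to deduce the latter from the uniform moment bounds already established (Lemmas \ref{unif_BV}, \ref{uniform_Holder}, \ref{unif_Holder_drift}). The key reduction is the following: a family of random probability measures $(L^{N,\eps})$ on the Polish space $E$ is tight (as $\mathcal{P}(E)$-valued random variables) if and only if the family of their \emph{intensity measures} (mean measures) $\mu^{N,\eps} := \E[L^{N,\eps}] \in \mathcal{P}(E)$ is tight on $E$; this is, e.g., Sznitman's criterion (see \cite[Proposition 2.2]{sznitman1991topics}). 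So the whole statement boils down to: the family of laws on $E = C_t \times C_{t,[0,1]} \times C_t$ given by
\begin{align*}
\mu^{N,\eps} = \text{Law}\Bigl( W^1,\ X^{1,N,\eps},\ -\textstyle\int_0^\cdot \mu^\eps(X^{1,N,\eps}_r)\,\rmd r - k^{1,N,\eps}\Bigr)
\end{align*}
is tight on $E$. (By exchangeability of the particles, the choice of index $i=1$ is harmless.)

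Next I would establish tightness of each of the three marginal families on its respective space. For the first component, $\text{Law}(W^1)$ is a single measure (the Wiener measure), hence trivially tight. For the second component $X^{1,N,\eps}$, I would apply the Kolmogorov–Chentsov tightness criterion in $C_t$: Lemma \ref{uniform_Holder} gives $\sup_{N,\eps,i}\E|X^{i}_t - X^{i}_s|^p \le C_p|t-s|^{\alpha p/2}$ for every $p$, and choosing $p$ large enough that $\alpha p/2 > 1$ yields the required moment modulus of continuity; together with the deterministic bound $X^{i}_t \in [0,1]$ (tightness of the time-zero marginals is automatic), this gives tightness of $(\text{Law}(X^{1,N,\eps}))_{N,\eps}$ in $C_{t,[0,1]}$. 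For the third component $Z^{1,N,\eps} = -\int_0^\cdot \mu^\eps(X^{1}_r)\rmd r - k^{1}$: this is a path of bounded variation, and Lemma \ref{unif_Holder_drift} gives $\sup_{N,\eps,i}\E\bigl(\int_s^t |\mu^\eps(X^{i}_r)|\rmd r + \int_s^t|\rmd k^{i}_r|\bigr)^p \le C_p|t-s|^{\alpha p /2}$, hence $\E|Z^{i}_t - Z^{i}_s|^p \le C_p|t-s|^{\alpha p/2}$ and $\E|Z^{i}_0|^p = 0$; again Kolmogorov–Chentsov with $p$ large gives tightness of $(\text{Law}(Z^{1,N,\eps}))_{N,\eps}$ in $C_t$. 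Since the joint law $\mu^{N,\eps}$ on the product $E$ has all three marginal families tight, $\mu^{N,\eps}$ itself is tight on $E$ (a product of relatively compact sets, one in each factor, is relatively compact in $E$, and the joint mass outside such a set is bounded by the sum of the three marginal tail masses).

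Finally I would assemble: tightness of $(\mu^{N,\eps})_{N,\eps}$ on $E$ implies, by Sznitman's criterion above, tightness of $(\text{Law}(L^{N,\eps}))_{N,\eps}$ on $\mathcal{P}(\mathcal{P}(E))$, which is the claim. The main (though minor) obstacle is purely bookkeeping: one must check that the intensity-measure reduction is legitimately applicable here — i.e., that $L^{N,\eps}$ is genuinely a measurable $\mathcal{P}(E)$-valued random variable and that $E$ is Polish so that the criterion applies — both of which are true and already noted in the setup. There is no real analytic difficulty beyond invoking the uniform bounds of the previous subsection; the only care needed is to pick the exponent $p$ in Kolmogorov–Chentsov large enough (possible since the Hölder-type moment bounds hold for \emph{all} $p<\infty$ with the same exponent $\alpha/2$ in $|t-s|$) and to note that all bounds are uniform in both $N$ and $\eps$, so the resulting tightness is uniform over the whole family. (We note that, as stated, Condition \ref{assumptions_mu}-(iii) is not used anywhere in this chain of estimates.)
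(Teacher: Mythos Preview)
Your proposal is correct and follows essentially the same route as the paper: both reduce tightness on $\mc{P}(\mc{P}(E))$ to tightness of the intensity measures on $E$ (the paper phrases this via a coercive function $F(\nu)=\nu(g)$ in Remark~\ref{rmk_tightness}, which is precisely the Sznitman criterion you invoke), and both then feed in the uniform H\"older-type moment bounds of Lemmas~\ref{uniform_Holder} and~\ref{unif_Holder_drift}; the only cosmetic difference is that the paper takes $g$ to be a fractional Sobolev norm $\|\cdot\|_{W^{\beta,p}_t}$ whereas you use Kolmogorov--Chentsov directly. One small correction: the particles need not be exchangeable (Condition~\ref{assumptions_x0N} does not require this of the initial data), so $\mu^{N,\eps}$ is in general the average $\frac{1}{N}\sum_{i=1}^N \text{Law}(W^i,X^{i,N,\eps},Z^{i,N,\eps})$ rather than the law of particle~$1$; since all the moment bounds are uniform in $i$ as well as in $N,\eps$, this changes nothing in your argument.
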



\begin{remark}\label{rmk_tightness}
In view of the proof, we recall the following standard/known facts:
\begin{itemize}
\item To prove that a family of probability measures $(P^n)_n$ on a metric space $\chi$ is tight, it is enough to find a nonnegative function $F$ on $\chi$, such that $F$ is coercive (that is, with compact sublevel sets) and $\int_\chi F \rmd P^n$ is bounded uniformly in $n$.
\item When $\chi=\mc{P}(E)$ for $E$ as above (and more generally for every Polish space $E$), endowed with the topology of weak convergence, we can take $F(\nu) = \int_E g\rmd \nu$ as nonnegative coercive function on $\mc{P}(E)$, provided that $g:E\rightarrow \mr$ is a nonnegative coercive function on $E$. Indeed, every sublevel set $\{F\le C\}$ is compact: for any sequence $(\nu^n)_n$ of measures on $E$, if all $\nu^n$ belong to $\{F\le C\}$, then, by the previous point applied to $g$, $(\nu^n)_n$ is tight, hence there exists a subsequence which is weakly convergent to some measure $\nu$ on $E$, and $\mu$ also belongs to $\{F\le C\}$ by Fatou lemma.
\item By Sobolev embedding, there exists $C>0$ such that, for every $\gamma$ in $E=C_t\times C_{t,[0,1]}\times C_t$,
\begin{align*}
\|\gamma\|_{C^\alpha_t} \le C\|\gamma\|_{W^{\beta,p}_t}
\end{align*}
for $\alpha = \beta - 1/p$, provided that $\beta - 1/p>0$. By Ascoli-Arzel\`a theorem, the norm $\|\cdot\|_{C^\alpha_t}$ is coercive on $E$ for $\alpha>0$. Therefore, to show that a certain family of probability measures $P^n$ on $\mc{P}(E)$ is tight, it is enough to show that
\begin{align*}
\sup_n\E^{P^n} \int_{E} \|\gamma\|_{W^{\beta,p}_t} \mu(\rmd \gamma)<\infty
\end{align*}
for some $\beta>0$, $p\ge1$ with $\beta - 1/p>0$ (here $\E^{P^n}$ denotes the expectation under $P^n$).
\end{itemize}
\end{remark}

\begin{proof}
By the previous Remark \ref{rmk_tightness}, it is enough to verify that, for some $\beta>0$, $p\ge 1$ with $\beta>1/p$, for $h=1,2,3$, 
\begin{align*}
\sup_n\E \int_{E} \|\gamma^h\|_{W^{N,\eps}_t} L^{\eps,N}(\rmd \gamma)<\infty.
\end{align*}

For $h=1$, that is the Brownian motion component, we have
\begin{align*}
\E \int_{E} \|\gamma^1\|_{W^{\beta,p}_t} L^{N,\eps}(\rmd \gamma) = \E \frac{1}{N}\sum^N_{i=1} \|W^i\|_{W^{\beta,p}_t} = \E \|W^1\|_{W^{\beta,p}_t} <\infty
\end{align*}
for any $\beta<1/2$ and $p\ge 1$.

For $h=2$, that is the solution component, by Lemma \ref{uniform_Holder} we get, for some $\beta>0$, for every $p\ge 1$ and $0<\delta<\beta$, for every $i=1,\ldots N$,
\begin{align*}
&\E [\|X^{i,N,\eps}\|_{W^{\beta-\delta,p}_t}^p]\\
&= \int^T_0 \E |X^{i,N,\eps}_t|^p \rmd t +\int^T_0\int^T_0 \frac{\E |X^{i,N,\eps}_t-X^{i,N,\eps}_s|^p}{|t-s|^{1+(\beta-\delta) p}} \rmd s\rmd t\\
&\le T +\int^T_0\int^T_0 |t-s|^{-(1-\delta p)}\rmd s\rmd t\, \sup_{s,t}\frac{\E |X^{i,N,\eps}_t-X^{i,N,\eps}_s|^p}{|t-s|^{\beta p}} \le C
\end{align*}
for some $C>0$ independent of $N$ and $\eps$. It follows that
\begin{align*}
\E \|\gamma^2\|_{W^{\beta-\delta\delta,p}_t} = \E \frac{1}{N}\sum^N_{i=1} \|X^{i,N,\eps}\|_{W^{\beta-\delta,p}_t} \le C.
\end{align*}

A similar argument, using Lemma \ref{unif_Holder_drift} in place of Lemma \ref{uniform_Holder}, works for $h=3$. The proof is complete.
\end{proof}

%
%
%

From now on, we assume that $\frac{1}{N}\sum_{i=1}^N\delta_{X^{i,N}_0}$ converges in law to some probability measure $\text{Law}(X_0)$.

In the following, we fix a limit point $Q$ of $\text{Law}(L^{N,\eps})$ and a $\mc{P}(E)$-valued random variable $L$ with law $Q$. With a little abuse of notation, we do not re-label the subsequence of $\text{Law}(L^{N,\eps})$ converging to $Q$, and we assume that $L$ is also defined on the same probability space $(\Omega,\mc{A},P)$ (this is only for notational simplicity). We call $Q^{N,\eps,e}$, $Q^e$ the (deterministic) probability measures on $E$ obtained averaging resp.~$L^{N,\eps}$, $L$, namely, for every Borel bounded or nonnegative function $g$ on $E$,
\begin{align*}
\E^{Q^{\eps,N,e}}[g] = \E[L^{\eps,N}(g)], \quad \E^{Q^e}[g] = \E[L(g)],
\end{align*}
where $\E^Q$ denotes the expectation with respect to $Q$.

\begin{remark}\label{rmk:conv_random_meas}
We recall some useful facts of convergence in law of random probability measures.
\begin{itemize}
\item Let $H:E\rightarrow \tilde{E}$ a continuous map with values in some Polish space $\tilde{E}$, then the $\mc{P}(\tilde{E})$-valued random variables $H_\# L^{N,\eps}$ converge in law to $H_\# L$. This follows from the continuity of the map $\nu \mapsto H_\#\nu$, which in turn follows from the continuity of $H$.
\item Let $g$ be in $C_b(E)$, then the real-valued random variables $L^{N,\eps}(g)$ converge in law to $L(g)$. Similarly to the previous point, this follows from the continuity of the map $\nu \mapsto \nu(g)$.
\item The probability measures $Q^{N,\eps,e}$ converge weakly to $Q^e$: indeed, for every $g$ in $C_b(E)$, $\E[L^{N,\eps}(g)]$ converge to $\E[L(g)]$.
\item For any Borel set $B$ of $E$, it holds $L^{N,\eps}(B)=1$ $P$-a.s.~if and only if $Q^{N,\eps,e}(B)=1$; similarly for $L$ and $Q^{\eps}$.
\end{itemize}
\end{remark}

Now we show that, for a.e.~$L$, $W$ (the first component in $E$) is a Brownian motion under $L$ and $X$ is a generalized solution to the McKean-Vlasov equation, with the right initial condition, under $L$. Roughly speaking, we would like to pass to the limit (as $N\to \infty$ and $\eps\to 0$) in equation \eqref{eq:particle_equiv} and get the McKean-Vlasov SDE \eqref{eq:meanfield_SDE_equiv}. The proof is in two parts. In the first part, we prove that the expectation of $X_t$ under $L$ is $q(t)$, this implies the first line in \eqref{eq:meanfield_SDE_equiv}; we also prove that the law of the initial condition $X_0$ and $W$ is $\text{Law}(X_0)\otimes \text{Wiener measure}$. In the second part we idenfity the reflection term and prove its properties, getting the equalities for $\bar{Z}$ and $\bar{k}$ in \eqref{eq:meanfield_SDE_equiv}.

\begin{lemma}\label{lemma:lambda}
It holds $P$-a.s.:
\begin{itemize}
\item For every $t$,
\begin{align*}
\E^L[X_t] = q(t).
\end{align*}
\item Under the measure $L$, the random path
\begin{align*}
t\mapsto X_t-X_0-\sigma W_t-Z_t
\end{align*}
is actually $L$-a.s.~deterministic; that is, the law of this path under $L$ is a Dirac delta.
\end{itemize}
\end{lemma}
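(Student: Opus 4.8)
The plan is to obtain both assertions by passing to the limit $N\to\infty$, $\eps\to0$ in the two structural identities satisfied by the particle system \eqref{SDE_single_reg} for every fixed $N,\eps$, using that $L^{N,\eps}$ converges in law to $L$ on $\mc P(E)$ and that these identities are encoded by continuous functionals of the empirical measure. First I would record the identities at finite $N$: integrating \eqref{SDE_single_reg} and using $Z^{i,N,\eps}=-\int_0^{\cdot}\mu^\eps(X^{i,N,\eps}_r)\,\rmd r-k^{i,N,\eps}$, one checks that for every $i$ and $t$
\begin{align*}
X^{i,N,\eps}_t-X^{i,N,\eps}_0-\sigma W^i_t-Z^{i,N,\eps}_t = K^{N,\eps}_t, \qquad K^{N,\eps}_t := -L^{N,\eps}(Z_t)+q(t)-q(0)-\sigma L^{N,\eps}(W_t),
\end{align*}
where $W_t,X_t,Z_t$ denote the time-$t$ evaluations (as functions on $E$) of the three canonical coordinates; thus $K^{N,\eps}$ is a path in $C([0,T])$ depending on $\omega$ only through $L^{N,\eps}$, and the constraint $\frac1N\sum_i X^{i,N,\eps}_t=q(t)$ reads $L^{N,\eps}(X_t)=q(t)$. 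Writing $\Phi\colon E\to C([0,T])$ for the continuous map sending $\gamma=(W,X,Z)$ to the path $t\mapsto X_t-X_0-\sigma W_t-Z_t$, these say that $P$-a.s.\ $\Phi_\#L^{N,\eps}=\delta_{K^{N,\eps}}$ and $L^{N,\eps}(X_t)=q(t)$ for all $t$.

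For the first bullet I would use that $\nu\mapsto\nu(X_t)$ is continuous and bounded (by $1$, since the second coordinate lives in $C_{t,[0,1]}$) on $\mc P(E)$, so that $L^{N,\eps}(X_t)\Rightarrow L(X_t)$ in law; since the left-hand side is the constant $q(t)$, this forces $L(X_t)=q(t)$ $P$-a.s., for each fixed $t$. Intersecting over a countable dense set of times and using that both $t\mapsto L(X_t)$ (dominated convergence, as $|X|\le1$ with continuous paths) and $q$ are continuous, one upgrades this to $\E^L[X_t]=q(t)$ for all $t$, $P$-a.s.

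For the second bullet, the key is to detect the property ``$\Phi_\#\nu$ is a Dirac mass'' by a continuous functional on $\mc P(E)$. I would take
\begin{align*}
F(\nu):=\int_E\int_E\Big(\big(\sup_{t\in[0,T]}|\Phi(\gamma)_t-\Phi(\gamma')_t|\big)\wedge1\Big)\,\nu(\rmd\gamma)\,\nu(\rmd\gamma'),
\end{align*}
which is continuous on $\mc P(E)$ (its integrand is bounded and continuous on $E\times E$, as $\Phi$ is continuous, and $\nu\mapsto\nu\otimes\nu$ is continuous), is nonnegative, and vanishes precisely when $\Phi_\#\nu$ is a Dirac mass (on a separable metric space, a probability measure whose self-product is carried by the diagonal is a point mass). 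Since $\Phi_\#L^{N,\eps}=\delta_{K^{N,\eps}}$ we have $F(L^{N,\eps})=0$ $P$-a.s.; since $F$ is continuous, $F(L^{N,\eps})\Rightarrow F(L)$ in law by the continuous mapping theorem, so the limit $F(L)$ equals the constant $0$, $P$-a.s. Therefore $\Phi_\#L$ is $P$-a.s.\ a Dirac mass, i.e.\ $t\mapsto X_t-X_0-\sigma W_t-Z_t$ is $L$-a.s.\ deterministic. Equivalently, one may note that the set of Dirac masses is closed in $\mc P(C([0,T]))$, that $\nu\mapsto\Phi_\#\nu$ is continuous, and apply the portmanteau theorem to this closed event.

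I do not expect a real obstacle here. The arithmetic in the first step is routine; the only slightly delicate points are the upgrade from ``for each fixed $t$'' to ``for all $t$'' in the first bullet (handled by a countable dense set of times and path continuity) and the construction of a continuous functional on $\mc P(E)$ with the correct zero set in the second. The substantive difficulties of this compactness scheme lie elsewhere---in the companion lemma identifying the reflection term $\bar k$ and verifying its sign and support, and in the uniform $BV$ and H\"older estimates already established.
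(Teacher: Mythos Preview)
Your proposal is correct and follows essentially the same strategy as the paper. The first bullet is handled identically: continuity and boundedness of $\nu\mapsto\nu(X_t)$, convergence in law to the constant $q(t)$, then a countable-dense-times plus continuity argument to get a single full-measure set. For the second bullet, the paper argues that the set of Dirac masses is closed in $\mc P(C_t)$ and uses Portmanteau on the law of the $\mc P(C_t)$-valued random variable $\Phi_\# L^{N,\eps}$; your primary route instead builds a bounded continuous functional $F$ on $\mc P(E)$ whose zero set is exactly $\{\nu:\Phi_\#\nu\text{ is a Dirac}\}$ and applies the continuous mapping theorem. These are equivalent in spirit---indeed you mention the closed-set argument as an alternative---and your $F$ has the advantage of packaging the ``Dirac'' property into a single scalar test, while the paper's version makes the topological structure (closedness of $\{\delta_\gamma\}$) more explicit. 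No gaps.
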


\begin{proof}
For the first point, we start fixing $t$. The function on $E$ defined by $(W,X,Z)\mapsto X_t$ is continuous and bounded (as $X$ takes values in $[0,1]$). Therefore, by Remark \ref{rmk:conv_random_meas}, the random variables $\E^{L^{N,\eps}}[X_t]$ converge in law to $\E^L[X_t]$. On the other hand, equation \eqref{SDE_single_reg} gives, for every $(N,\eps)$, $P$-a.s.,
\begin{align*}
\E^{L^{N,\eps}}[X_t] = \frac{1}{N}\sum^N_{i=1} X^{i,N,\eps} =q(t).
\end{align*}
Hence the law of $\E^L[X_t]$ is $\delta_{q(t)}$, that is $\E^L[X_t]=q(t)$ on a $P$-full measure set $\Omega_t$, which may depend on $t$. To make the exceptional set independent on $t$, we note that, by dominated convergence theorem, $t\mapsto \E^L[X_t]$ is continuous for every $\omega$ and that $q$ is also continuous by assumption, hence we have the equality for every $t$ in the full-measure set $\Omega' = \cap_{s\in\mathbb{Q}\cap[0,T]} \Omega_s$. The proof of the first point is complete.

For the second point, we have to prove that $(X-X_0-\sigma W-Z)_\#L$ is a Dirac delta $P$-a.s.. By Remark \ref{rmk:conv_random_meas}, the $\mc{P}(C_t)$-valued random variables $(X-X_0-\sigma W-Z)_\#L^{N,\eps}$ converge in law to $(X-X_0-\sigma W-Z)_\#L$. On the other hand, equation \eqref{SDE_single_reg} gives, for every $(N,\eps)$, $P$-a.s.: for every $i=1,\ldots N$,
\begin{align*}
&(X-X_0-\sigma W-Z)_\#\delta_{(W^i,X^{i,N,\eps},-\int^\cdot_0\mu^\eps(X^{i,N,\eps}_r)\rmd r -k^{i,N,\eps})}\\
&= \delta_{X^{i,N,\eps}-X^{i,N,\eps}_0-\sigma W^i_t +\int^\cdot_0\mu^\eps(X^{i,N,\eps}_r)\rmd r +k^{i,N,\eps}}\\
&=\delta_{q(t) -q(0) +\frac{1}{N}\sum^N_{j=1} [\int^\cdot_0\mu^\eps(X^{j,N,\eps}_r)\rmd r -\sigma W^j_t +k^{j,N,\eps} ]}=:\delta_{\gamma^{N,\eps}},
\end{align*}
note that $\gamma^{N,\eps}$ is independent of $i$. Averaging over $i$, we get
\begin{align*}
(X-X_0-\sigma W-Z)_\#L^{N,\eps} = \delta_{\gamma^{N,\eps}},
\end{align*}
in particular $(X-X_0-\sigma W-Z)_\#L^{N,\eps}$ is concentrated on the subset $\{\delta_{\gamma}\mid \gamma\in C_t\}$ of $\mc{P}(C_t)$. We claim that $\{\delta_{\gamma}\mid \gamma\in C_t\}$ is a closed set in $\mc{P}(C_t)$. Hence, since $(X-X_0-\sigma W-Z)_\#L^{N,\eps}$ converges in law to $(X-X_0-W-Z)_\#L$, also $(X-X_0-\sigma W-Z)_\#L$ is concentrated on $\{\delta_{\gamma}\mid \gamma\in C_t\}$, that is the law of $X-X_0-\sigma W-Z$ under $L$ is a Dirac delta.

It remains to prove the above claim. If $\delta_{\gamma^n}$ converge to a measure $\nu$, then, by tightness of $\delta_{\gamma^n}$, there exists a compact set $K$ in $C_t$ such that $\delta_{\gamma^n}(K)>1/2$ and so $\gamma^n$ belong to $K$, for every $n$. Therefore there exists a subsequence $\gamma^{n_k}$ converging to some element $\gamma$ in $K$, hence $\delta_{\gamma^{n_k}}$ converge to $\delta_\gamma$ and so $\nu=\delta_\gamma$ belongs to $\{\delta_{\gamma}\mid \gamma\in C_t\}$, which is then closed. The proof of the second point is complete.
\end{proof}

\begin{lemma}\label{lemma:BM_lambda}
It holds $P$-a.s.: under $L$, the $C_t\times \mr$-valued random variable $(W,X_0)$ has law $P^W\otimes\text{Law}(X_0)$, where $P^W$ is the Wiener measure.
\end{lemma}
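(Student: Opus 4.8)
The plan is to follow the pattern of the proof of Lemma~\ref{lemma:lambda}, exploiting that the $W^i$ are Brownian motions on $(\mc{F}_t)_t$ and hence independent of $\mc{F}_0$, while $X^{(N)}_0$ is $\mc{F}_0$-measurable; in particular $(W^1,\dots,W^N)$ has conditional law $(P^W)^{\otimes N}$ given $\mc{F}_0$. Fix $f\in C_b(C_t)$, $g\in C_b(\mr)$ and set $h:=f(W)g(X_0)$, a bounded continuous function on $E$ (here $X_0$ denotes the value at time $0$ of the $C_{t,[0,1]}$-component), so that $L^{N,\eps}(h)=\frac1N\sum_{i=1}^N f(W^i)g(X^{i,N}_0)$.

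First I would decouple the sum by conditioning on $\mc{F}_0$. With $\bar f:=\E[f(W^1)]$,
\[
\E\big[L^{N,\eps}(h)\mid\mc{F}_0\big]=\bar f\,\frac1N\sum_{i=1}^N g(X^{i,N}_0),
\]
and, since conditionally on $\mc{F}_0$ the variables $f(W^i)g(X^{i,N}_0)$ are independent, the conditional variance of $L^{N,\eps}(h)$ is at most $\|f\|_\infty^2\|g\|_\infty^2/N$. Taking expectations gives $L^{N,\eps}(h)-\bar f\,\frac1N\sum_{i=1}^N g(X^{i,N}_0)\to0$ in $L^2(P)$. By the assumption that $\frac1N\sum_{i=1}^N\delta_{X^{i,N}_0}\to\text{Law}(X_0)$ in probability, $\frac1N\sum_{i=1}^N g(X^{i,N}_0)\to\E[g(X_0)]$ in probability, hence $L^{N,\eps}(h)\to\bar f\,\E[g(X_0)]=(P^W\otimes\text{Law}(X_0))(f\otimes g)$ in probability. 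On the other hand, as in Remark~\ref{rmk:conv_random_meas}, the map $\nu\mapsto\nu(h)$ is bounded and continuous on $\mc{P}(E)$, so $L^{N,\eps}(h)$ converges in law to $L(h)$ along the chosen subsequence; comparing the two limits, $L(h)=(P^W\otimes\text{Law}(X_0))(f\otimes g)$ $P$-a.s.

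To remove the dependence of the exceptional set on $f$ and $g$, I would fix countable families $\{f_k\}\subset C_b(C_t)$ and $\{g_m\}\subset C_b(\mr)$ that are convergence-determining on $\mc{P}(C_t)$ and $\mc{P}(\mr)$ respectively (such families exist since both spaces are Polish); then $\{f_k\otimes g_m\}$ is convergence-determining, hence measure-determining, on $\mc{P}(C_t\times\mr)$. On the intersection over all $k,m$ of the above full-measure events, which is still of full measure, the law $(W,X_0)_\#L$ agrees with $P^W\otimes\text{Law}(X_0)$ on every $f_k\otimes g_m$ and is therefore equal to it. The only point that needs a little care is the decoupling step — justifying that, conditionally on $\mc{F}_0$, the $W^i$ remain i.i.d.\ Brownian motions independent of $X^{(N)}_0$; the rest is the ``convergence in probability plus convergence in law to a deterministic limit'' argument already used in Lemma~\ref{lemma:lambda}, together with a standard countable determining class.
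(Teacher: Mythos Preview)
Your proof is correct and follows the same overall strategy as the paper: identify the limit of the random measures $(W,X_0)_\#L^{N,\eps}=\frac1N\sum_i\delta_{(W^i,X^{i,N}_0)}$ as the deterministic measure $P^W\otimes\text{Law}(X_0)$, then conclude via the continuous-mapping argument of Remark~\ref{rmk:conv_random_meas}. The only difference is one of presentation: the paper outsources the convergence $\frac1N\sum_i\delta_{(W^i,X^{i,N}_0)}\to P^W\otimes\text{Law}(X_0)$ in law to an external reference (\cite[Lemma~29]{CDFM2020}), whereas you supply a self-contained proof of exactly this fact via the conditioning-on-$\mc{F}_0$ and variance bound, followed by a countable determining class to handle the exceptional sets. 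Your approach has the advantage of being fully self-contained and making transparent where the independence of $W^{(N)}$ from $\mc{F}_0$ and the $\mc{F}_0$-measurability of $X^{(N)}_0$ enter; the paper's version is shorter but relies on the reader consulting the cited lemma.
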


\begin{proof}
The map from $E$ to $C_t\times \mr$ defined by $(W,X,Z)\mapsto (W,X_0)$ is continuous. Therefore, by Remark \ref{rmk:conv_random_meas}, the random empirical measures
\begin{align*}
(W,X_0)_\# L^{N,\eps} = \frac{1}{N}\sum^N_{i=1} \delta_{(W^i,X^{i,N}_0)}
\end{align*}
converge in law to $(W,X_0)_\# L$. On the other hand, the above random measures converge in law to $P^W\otimes \text{Law}(X_0)$ (see e.g. \cite[Lemma 29]{CDFM2020}). Hence the law of $\text{Law}^L(W,X_0)$ is $\delta_{P^W\otimes \text{Law}(X_0)}$, that is $\text{Law}^L(W,X_0)=P^W\otimes \text{Law}(X_0)$ $P$-a.s.. The proof is complete.
\end{proof}

Next we define the process $k$ by
\begin{align}
k_t = k(X,Z)_t = -\int^t_0 1_{X_r\notin ]0,1[} \rmd Z_r,\label{def_k}
\end{align}
if $Z$ is a $BV$ path on $[0,T]$, $k_t=0$ otherwise. We call $|k|$ the total variation process associated with $k$.

\begin{lemma}\label{lemma:BV}
It holds $P$-a.s.:
\begin{itemize}
\item The processes $Z$, $\int^\cdot_0\mu(X_r)\rmd r$, $k$ have $BV$ trajectories $L$-a.e.~and their $BV$ norms are $p$-integrable with respect to $L$, for any $1\le p<\infty$.
\item It holds $L$-a.e.: for every $t\ge 0$,
\begin{align}
Z_t + k_t = \int^t_0 1_{X_r\in ]0,1[} \rmd Z_r = -\int^t_0 \mu(X_r)1_{X_r\in ]0,1[} \rmd r.\label{eq:drift_term}
\end{align}
\item  It holds $L$-a.e.: the process $k$ satisfies the condition
\begin{align}
\rmd|k|_t = 1_{X_t\in \{0,1\}}\rmd|k|_t,\ \ \rmd k_t = n(X_t)\rmd|k|_t.\label{eq:boundary_term}
\end{align}
\end{itemize}
\end{lemma}

\begin{proof}
For all statements but the $p$-integrability of the $BV$ norms, by Remark \ref{rmk:conv_random_meas}, it is enough to prove these statements $Q^e$-a.e.~(recall $Q^e$ is the average of $L$) instead of $L$-a.e.~(provided we work with Borel sets/properties, as the proof will do); it is also enough to prove $p$-integrability of the $BV$ norms of $Z$, $\int^\cdot_0\mu(X_r)\rmd r$, $k$ with respect to $Q^e$. Again by Remark \ref{rmk:conv_random_meas}, the measures $Q^{N,\eps,e}$ converge in law to $Q^e$, hence we can work with $Q^{N,\eps,e}$ and $Q^e$ only.

\textbf{$BV$ property of $Z$ and $k$}. By Lemma \ref{unif_BV}, we have
\begin{align}
\E^{Q^{N,\eps,e}} \|Z\|_{BV}^p \le \E \frac{1}{N}\sum^N_{i=1} \left(\int^T_0 |\mu^\eps(X^{i,N,\eps}_r)|\rmd r +|k^{i,N,\eps}|_T\right)^p \le C\label{unif_BV_Z}
\end{align}
for some constant $C$ independent of $\eps$ and $N$. Now the $BV$ norm is lower semi-continuous in $C_t$, since it can be written as
\begin{align*}
\|\gamma\|_{BV} = \sup_\pi \sum_{[t_i,t_{i+1}[\in\pi}|\gamma(t_{i+1})-\gamma(t_i)|,
\end{align*}
the $\sup$ being over all partitions $\pi$ of $[0,T]$. Therefore it holds
\begin{align*}
\E^{Q^e} \|Z\|_{BV}^p \le C,
\end{align*}
in particular $Z$ and so $k$ have $BV$ paths $Q^e$-a.s..

\textbf{Support property of $|k|$}. By definition, $k$ is concentrated on $\{t\in [0,T]\mid X_t\in \{0,1\}\}$, which is a closed set, hence also its total variation process $|k|$ is concentrated on this set and we conclude that
\begin{align}
\rmd|k|_t = 1_{X_t\in \{0,1\}}\rmd|k|_t.\label{eq:supp_k}
\end{align}

\textbf{$BV$ property of $\int^\cdot_0\mu(X_r)\rmd r$}. Since $\mu^\eps\ge \mu^\delta$ for $\eps<\delta$, by monotone convergence theorem we have
\begin{align*}
&\E^{Q^e} \left(\int^T_0 |\mu(X_r)|\rmd r\right)^p = \sup_\delta \E^{Q^e} \left(\int^T_0 |\mu^\delta(X_r)|\rmd r\right)^p\\
&= \sup_\delta \lim_{N,\eps} \left(\E^{Q^{N,\eps,e}} \int^T_0 |\mu^\delta(X_r)|\rmd r\right)^p\\
&= \sup_\delta \lim_{N,\eps} \E\frac{1}{N}\sum^N_{i=1} \left(\int^T_0 |\mu^\delta(X^{N,\eps,i}_r)|\rmd r\right)^p\\
&\le \liminf_{N,\eps} \E\frac{1}{N}\sum^N_{i=1} \left(\int^T_0 |\mu^\eps(X^{N,\eps,i}_r)|\rmd r\right)^p <\infty,
\end{align*}
in particular also $\int^\cdot_0\mu(X_r)\rmd r$ has $BV$ trajectories, with $Q^e$-integrable $BV$ norm.

\textbf{Representation formulae for $Z+k$ and $k$}. We take $a>0$, $\delta>0$, $\varphi:[0,1]\rightarrow\mr$ $C^1$ function with support in $[\delta,1-\delta]$, $\tilde{n};[0,1]\rightarrow\mr$ a continuous extension of the outer normal $n$ with support on $]\delta,1-\delta[^c$ and with $\tilde{n}\ge 0$ on $[1-\delta,1]$ and $\tilde{n}\le 0$ on $[0,\delta]$, $g,h:[0,T]\rightarrow \mr$ continuous with $g$ non-negative. We consider the set
\begin{align*}
&A=A_{a,\varphi,\tilde{n},h,g} \\
&= \{(W,X,Z) \in E \mid \|Z\|_{BV}\le a,\ \int^T_0 h(r)\varphi(X_r) \rmd Z_r = -\int^T_0 h(r)\varphi(X_r)\mu(X_r) \rmd r,\ \int^T_0 g(r)\tilde{n}(X_r) \rmd Z_r \le 0 \}.
\end{align*}

\begin{lemma}
The set $A$ is closed in $E$.
\end{lemma}

\begin{proof}
Let $(W^n,X^n,Z^n)$ be a sequence in $A$ converging to $(W,X,Z)$ uniformly. Since the $BV$ norm of $Z^n$ is bounded by $a$ for every $n$, up to taking a subsequence we can assume that $dZ^n$ converges weakly-* to a measure $\nu$ with total variation $\|\nu\|_{TV}\le a$. Passing to the limit in the chain rule for $Z$, we find that, for every $\psi$ in $C^\infty([0,T])$,
\begin{align*}
\int^T_0 \psi\rmd \nu = \psi(T)Z_T - \psi(0)Z_0 -\int^T_0 \psi' Z\rmd r.
\end{align*}
Hence $\nu$ is the distributional derivative of $Z$, which therefore satisfies $\|Z\|_{BV}\le a$.

Concerning the stability of the conditions involving $\mu$ and $\tilde{n}$, note that $h(r)\varphi(X^n_r)\rightarrow h(r)\varphi(X_r)$ uniformly and also $h(r)\varphi(X^n_r)\mu(X^n_r)\rightarrow h(r)\varphi(X_r)\mu(X_r)$ uniformly (since $\mu$ is $C^1$ on $[\delta,1-\delta]$). This fact and the weak-* convergence of $Z^n$ implies that
\begin{align*}
&\int_{[0,T]} h(r)\varphi(X_r) \rmd Z_r = \lim_n \int^T_0 h(r)\varphi(X^n_r) \rmd Z^n_r\\
&= - \lim_n \int^T_0 h(r)\varphi(X^n_r)\mu(X^n_r) \rmd r = - \int^T_0 h(r)\varphi(X_r)\mu(X_r) \rmd r.
\end{align*}
Reasoning similarly for $\tilde{n}$, we find
\begin{align*}
\int^T_0 g(r)\tilde{n}(X_r) \rmd Z_r = \lim_n \int^T_0 g(r)\tilde{n}(X^n_r) \rmd Z^n_r \le 0.
\end{align*}
This proves that $(W,X,Z)$ is in $A$. Hence $A$ is closed.
\end{proof}

Now the equation for $X^{\eps,N}$ and Condition \ref{assumptions_mu}-(iii) imply, for $\delta<\rho$: for $\eps<\delta$, under $Q^{\eps,N,e}$ it holds a.s.
\begin{align*}
&\int^T_0 h(r)\varphi(X_r) \rmd Z_r = -\int^T_0 h(r)\varphi(X_r)\mu(X_r) \rmd r,\\
&\int^T_0 g(r)\tilde{n}(X_r) \rmd Z_r = -\int^T_0 g(r)\tilde{n}(X_r)(\mu^\eps(X_r)\rmd r +\rmd k_r) \le 0.
\end{align*}
Moreover the uniform bound \eqref{unif_BV_Z} implies
\begin{align*}
Q^{\eps,N,e}\{\|Z\|_{BV}> a\}\le \frac{1}{a}\E^{Q^{\eps,N,e}} \|Z\|_{BV} \le C/a.
\end{align*}
Therefore, for $\delta<\rho$, for any $a$, $Q^{\eps,N,e}(A) \ge 1-C/a$. Since $A$ is closed, we conclude that $Q^e(A) \ge 1-C/a$. Hence $Q^e$ is concentrated on the set
\begin{align*}
&B_{\varphi,\tilde{n},h,g}=\\
&\{(W,X,Z) \in \Omega \mid \|Z\|_{BV}<\infty,\ \int^T_0 h(r)\varphi(X_r) \rmd Z_r = -\int^T_0 h(r)\varphi(X_r)\mu(X_r) \rmd r,\ \int^T_0 g(r)\tilde{n}(X_r) \rmd Z_r \le 0 \},
\end{align*}
for every $\varphi$, $\td{n}$, $h$, $g$ as above. Now we take: $\varphi=\varphi^m$ tending pointwise to $1_{]0,1[}$ and uniformly bounded in $m$; $\tilde{n}=\tilde{n}^m$ tending pointwise to $n(x)1_{\{0,1\}}$ and uniformly bounded in $m$; $h$ in acountable dense set $D$ in $C_t$, $g$ in $D^+$ countable dense set $D^+$ in $\{g\in C_t\mid g\ge 0\}$. Therefore we have
\begin{align}
Q^e\text{ is concentrated on }\td{B} = \cap_{m,h\in D,g\in D^+} B_{\varphi^m,\tilde{n}^m,h,g} \cap \{(W,X,Z)\mid \int^\cdot_0 \mu(X)\rmd r \in BV\}.\label{eq:Qe_supp}
\end{align}

\begin{lemma}\label{lem:tdB}
For every $(W,X,Z)$ in $\td{B}$, it holds:
\begin{align}
&\int^t_0 1_{X_r\in ]0,1[} \rmd Z_r = -\int^t_0 \mu(X_r)1_{X_r\in ]0,1[} \rmd r\ \ \forall t,\label{repr_mu}\\
&\rmd k_t = n(X_t)\rmd|k|_t.\label{repr_k}
\end{align}
\end{lemma}

\begin{proof}
For every $(W,X,Z)$, for every fixed $h$ in $D$ and $g$ in $D^+$, the $BV$ property of $Z$ (and so of $k$) and of $\int^\cdot_0\mu(X_r)\rmd r$ implies, via dominated convergence theorem,
\begin{align*}
&\int^T_0 h(r)\varphi^m(X_r) \rmd Z_r \rightarrow \int^T_0 h(r)1_{X\in]0,1[} \rmd Z_r,\ \ \int^T_0 h(r)\varphi^m(X_r)\mu(X_r) \rmd r \rightarrow \int^T_0 h(r)1_{X\in]0,1[}\mu(X_r) \rmd r,\\
&\int^T_0 g(r)\tilde{n}^m(X_r) \rmd Z_r \rightarrow -\int^T_0 g(r)n(X_r) \rmd k_r.
\end{align*}
Therefore, if $(W,X,Z)$ is in $\td{B}$, passing to the limit in $m$ in the definition of $B_{\varphi,\tilde{n},h,g}$ we get
\begin{align*}
&\int^T_0 h(r)1_{X\in]0,1[} \rmd Z_r = -\int^T_0 h(r)\mu(X_r)1_{X\in]0,1[} \rmd r,\\
&-\int^T_0 g(r)n(X_r) \rmd k_r \le 0.
\end{align*}
for all $h$ in $D$, $g$ in $D^+$. By the density of $D$ and $D^+$ we obtain \eqref{repr_mu} and that $n(X_r)dk_r\ge 0$, which together with \eqref{eq:supp_k} implies \eqref{repr_k}.
\end{proof}

Thanks to \eqref{eq:Qe_supp} and Lemma \ref{lem:tdB}, we conclude that, for $Q^e$-a.e. $(W,X,Z)$, the representation formulae \eqref{repr_mu} and \eqref{repr_k} hold. Therefore \eqref{eq:drift_term} and \eqref{eq:boundary_term} hold. The proof of Lemma \ref{lemma:BV} is complete.
\end{proof}

\begin{remark}\label{rmk:tech_2}
Only in the above proof we use Condition \ref{assumptions_mu}-(iii). If $\sigma\neq0$, we expect that, by a suitable version of Girsanov theorem on domains, the time spent by $X$ on the boundary has zero Lebesgue measure, $Q$-a.s.. Morally this should allow to remove or relax the condition \ref{assumptions_mu}-(iii).
\end{remark}

We are ready to prove:

\begin{proposition}
It holds $P$-a.e.: under $L$, $(X,k)$ is a generalized solution to the McKean-Vlasov problem \eqref{meanfield_SDE_single} starting from $X_0$, with initial distribution $\text{Law}(X_0)$ (more precisely, $(E,\mc{B}(E),W,X,k,L)$ is a weak generalized solution with initial distribution $\text{Law}(X_0)$).
\end{proposition}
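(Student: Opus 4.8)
The plan is to assemble the three preceding lemmas; at this stage essentially all the analysis is done and only bookkeeping remains. I would fix $\omega$ in the intersection of the three $P$-full-measure sets provided by Lemmas \ref{lemma:lambda}, \ref{lemma:BM_lambda} and \ref{lemma:BV}, abbreviate $L=L(\omega)$ (and $Q^e$ for its average), and verify clause by clause that $(E,\mc{B}(E),W,X,k,L)$ — with $W,X,Z$ the canonical coordinate processes on $E$ and $k=k(X,Z)$ defined by \eqref{def_k} — is a weak generalized solution of \eqref{meanfield_SDE_single} with initial law $\text{Law}(X_0)$.

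The first step is to recover the SDE. By Lemma \ref{lemma:lambda} the path $t\mapsto X_t-X_0-\sigma W_t-Z_t$ is $L$-a.s.\ equal to a single fixed continuous path, which I call $\bar K=\bar K(\omega)$; note $\bar K_0=0$. By the second bullet of Lemma \ref{lemma:BV}, $L$-a.e.\ $Z_t+k_t=-\int_0^t\mu(X_r)1_{X_r\in]0,1[}\,\rmd r$ for all $t$, and since $\mu(0)=\mu(1)=0$ by Condition \ref{assumptions_mu}-(iii) the indicator may be dropped, giving $Z_t=-\int_0^t\mu(X_r)\,\rmd r-k_t$; moreover $Z$, $\int_0^\cdot\mu(X_r)\rmd r$, $k$ are all of bounded variation with $L$-integrable total variation, and $k$ is continuous (being a difference of two continuous paths). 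Substituting this expression for $Z$ into the definition of $\bar K$ yields, $L$-a.e., $X_t=X_0+\sigma W_t-\int_0^t\mu(X_r)\rmd r-k_t+\bar K_t$ for all $t$, that is $\rmd X=-\mu(X)\rmd t+\sigma\rmd W+\rmd\bar K-\rmd k$ under $L$; and the boundary conditions $\rmd|k|=1_{X\in\{0,1\}}\rmd|k|$, $\rmd k=n(X)\rmd|k|$ are exactly the third bullet of Lemma \ref{lemma:BV}.

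The second step is to identify $\bar K$. Since $\bar K_t$ is $L$-a.s.\ constant, it equals its $L$-expectation, so I take $\E^L$ of $\bar K_t=X_t-X_0-\sigma W_t+\int_0^t\mu(X_r)\rmd r+k_t$. Here $\E^L[X_t]=q(t)$ by Lemma \ref{lemma:lambda}; $\E^L[W_t]=0$ and $\E^L[X_0]=q(0)$ by Lemma \ref{lemma:BM_lambda} together with the fact that $\text{Law}(X_0)$ has mean $q(0)$ (which follows from Condition \ref{assumptions_x0N} and the assumed convergence of $\frac1N\sum\delta_{X_0^{i,N}}$, everything being bounded in $[0,1]$); and Fubini — justified by the $L$-integrability of the $BV$ norms of $\int_0^\cdot\mu(X_r)\rmd r$ and of $k$ from Lemma \ref{lemma:BV} — lets me move the expectation inside the integrals. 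This gives $\bar K_t=q(t)-q(0)+\int_0^t\E^L[\mu(X_r)]\rmd r+\E^L[k_t]$, i.e.\ $\rmd\bar K=(\E^L[\mu(X)]+\dot q)\rmd t+\E^L[\rmd k]$, which is precisely the prescribed form of $\bar K$ in \eqref{meanfield_SDE_single}; the integrability requirements $\int_0^T\E^L[|\mu(X_r)|]\rmd r<\infty$ and $\E^L\int_0^T|\rmd k_r|<\infty$ are again part of Lemma \ref{lemma:BV}. Finally, $X\in C([0,T];[0,1])$ holds by construction, $X$ and $k$ are $\mc{B}(E)\otimes\mc{B}([0,T])$-measurable because they are built from the canonical continuous coordinates on $E$, and by Lemma \ref{lemma:BM_lambda} $W$ is a Brownian motion under $L$ with $X_0$ independent of it and of law $\text{Law}(X_0)$; collecting all of this gives the claim.

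I do not expect a genuinely hard step: the substantive work — tightness, the $BV$ and Hölder estimates, and the characterization of the reflection term — has already been carried out in the earlier lemmas. The only points that need a little care are the use of $\mu(0)=\mu(1)=0$ to turn $-\int_0^t\mu(X_r)1_{X_r\in]0,1[}\rmd r$ into the honest drift $-\int_0^t\mu(X_r)\rmd r$; the two Fubini interchanges $\int_0^t\E^L[\mu(X_r)]\rmd r=\E^L\int_0^t\mu(X_r)\rmd r$ and the analogue for $k$; and checking $\E^L[X_0]=q(0)$ from the convergence of the initial empirical measures.
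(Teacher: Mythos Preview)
Your proposal is correct and follows essentially the same route as the paper: you assemble Lemmas \ref{lemma:lambda}, \ref{lemma:BM_lambda} and \ref{lemma:BV}, use $\mu(0)=\mu(1)=0$ to drop the indicator in the drift, and identify the deterministic part $\bar K$ by taking $\E^L$ to recover the prescribed form $\rmd\bar K=(\E^L[\mu(X)]+\dot q)\rmd t+\E^L[\rmd k]$. The paper's version is terser (it stops after writing $X_t=X_0-Z_t+\sigma W_t+q(t)-q(0)-\E^L Z_t$ and the decomposition of $Z$, then simply asserts the conclusion), whereas you spell out the Fubini step and the verification $\E^L[X_0]=q(0)$; these are harmless additions.
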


\begin{proof}
By Lemma \ref{lemma:BM_lambda}, $P$-a.s.,~$W$ is a Brownian motion under $L$ and $X_0$ is independent of $W$. As a consequence of Lemma \ref{lemma:lambda} it holds, $P$-a.s.,~under $L$: for every $t$,
\begin{align}
X_t = X_0 +Z_t +\sigma W_t +q(t)-q(0) -\E^L Z_t,
\end{align}
where we have used $\E^L W_t=0$. By Lemma \ref{lemma:BV}, it holds, $P$-a.s.,~under $L$: $\int^\cdot_0\mu(X_r)1_{X_r\in]0,1[}\rmd r$ and $k$ are in $BV$ with integrable $BV$ norms and, for every $t$,
\begin{align}
Z_t = -\int^t_0\mu(X_r)1_{X_r\in]0,1[}\rmd r -k_t = -\int^t_0\mu(X_r)\rmd r -k_t,\label{eq:Z_Xk}
\end{align}
where $k_t$ satisfies \eqref{eq:boundary_term} and where we have used that $\mu(0)=\mu(1)=0$. Therefore $(X,k)$ satisfies \eqref{eq:meanfield_SDE_equiv} and so it is a generalized solution.
\end{proof}

We deduce, via Yamada-Watanabe, the existence of a strong solution to \eqref{meanfield_SDE_single}, that is the existence part of Theorem \ref{thm:one}, as well as uniqueness in law:

\begin{corollary}\label{cor:limit_pt}
It holds $P$-a.s.: under $L$, $(X,k)$ is a strong solution to the SDE \eqref{meanfield_SDE_single} and the law of $X$ under $L$ coincide with the unique law $\text{Law}(\bar{X})$ of any solution to \eqref{meanfield_SDE_single} starting from $\text{Law}(\bar{X}_0)=\text{Law}(X_0)$.
\end{corollary}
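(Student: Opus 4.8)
The plan is to reduce the corollary to the two Yamada--Watanabe statements already established, Propositions \ref{YamWat_uniq} and \ref{YamWat_exist}, together with the preceding proposition. By that proposition there is a single $P$-null set $\mc{N}\in\mc{A}$ such that, for every $\omega\notin\mc{N}$, the object $(E,\mc{B}(E),W,X,k,L(\omega))$ is a weak generalized solution of \eqref{meanfield_SDE_single} with initial distribution $\text{Law}(X_0)$; here $W,X,k$ are the canonical coordinate maps on $E$ and $W$ is a Brownian motion under $L(\omega)$ by Lemma \ref{lemma:BM_lambda}. This already produces a weak (generalized) solution of \eqref{meanfield_SDE_single}, so existence is not an issue; what remains is to upgrade it to a strong solution and to identify its law.

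First I would use Proposition \ref{YamWat_uniq}. Since any adapted solution of \eqref{meanfield_SDE_single} is a fortiori a generalized solution, Proposition \ref{YamWat_uniq} yields uniqueness in law for \eqref{meanfield_SDE_single}, so that ``the law $\text{Law}(\bar X)$ of any solution starting from $\text{Law}(\bar X_0)=\text{Law}(X_0)$'' is a well-defined object. Applying Proposition \ref{YamWat_uniq} a second time, to the weak generalized solution $(E,\mc{B}(E),W,X,k,L(\omega))$ for a fixed $\omega\notin\mc{N}$ and to any fixed solution $\bar X$ with the same initial law, I obtain that the law of $(W,X)$ under $L(\omega)$ coincides with the law of $(W,\bar X)$; in particular $\text{Law}^{L(\omega)}(X)=\text{Law}(\bar X)$, which in particular does not depend on $\omega\notin\mc{N}$.

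Next I would apply Proposition \ref{YamWat_exist} to the weak generalized solution $(E,\mc{B}(E),W,X,k,L(\omega))$: it gives that, under $L(\omega)$, the pair $(X,k)$ is progressively measurable with respect to the augmentation of the filtration generated by $W$ and $X_0$, i.e. it is a \emph{strong} solution of \eqref{meanfield_SDE_single}. Together with the previous step this proves the corollary. As a byproduct --- and this is the existence part of Theorem \ref{thm:one} --- one transports the measurable solution map built in the proof of Proposition \ref{YamWat_exist} onto any prescribed probability space carrying a Brownian motion $W$ and an initial datum $\bar X_0$ satisfying Condition \ref{assumptions_x0}, obtaining there a strong solution.

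I do not expect a genuine obstacle at this point: the substantive work --- the uniform $BV$ and H\"older bounds, the tightness of the empirical measures, and the identification of every limit point $L$ as a generalized solution with the correct initial law --- has already been done. The present step is a purely formal application of the generalized Yamada--Watanabe principle; the only point to watch is that $(E,\mc{B}(E),L(\omega))$ equipped with the canonical first coordinate is an admissible input to Propositions \ref{YamWat_uniq} and \ref{YamWat_exist} (no adaptedness and no completeness of $\mc{B}(E)$ are required, which is exactly why those two propositions were phrased for generalized solutions), and that the exceptional $\omega$-set is uniform, which is automatic since it is the single null set $\mc{N}$ inherited from the preceding proposition.
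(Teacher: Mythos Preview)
Your proof is correct and matches the paper's own argument: both reduce the corollary to a direct application of Propositions \ref{YamWat_exist} and \ref{YamWat_uniq} to the weak generalized solution $(E,\mc{B}(E),W,X,k,L(\omega))$ furnished by the preceding proposition. The paper's proof is just the two-line version of what you wrote, invoking \ref{YamWat_exist} for strongness and \ref{YamWat_uniq} for the identification of the law.
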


\begin{proof}
We have $P$-a.s.: the couple $(X,k)$ is a weak generalized solution under $L$, hence it is a strong solution, via Proposition \ref{YamWat_exist}. Proposition \ref{YamWat_uniq} gives uniqueness in law for the $X$ component.
\end{proof}

Finally we arrive at the convergence result, that is Theorem \ref{thm:convergence}:

\begin{corollary}
The family $(\frac{1}{N}\sum^N_{i=1}\delta_{X^{\eps,N,i}} = X_\#L^{\eps,N})_{\eps,N}$ of random probability measures on $C([0,T];[0,1])$ converges in probability, as $\eps\rightarrow 0$ and $N\rightarrow\infty$, to the law of the McKean-Vlasov solution $\bar{X}$ (starting from $\text{Law}(X_0)$).
\end{corollary}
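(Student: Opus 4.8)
The plan is to read off the claim from the two ingredients already assembled: the tightness of $(\text{Law}(L^{N,\eps}))_{N,\eps}$ and the identification of all its limit points in Corollary \ref{cor:limit_pt}. Combining these with the elementary principle that, for random variables valued in a Polish space, convergence in law to a deterministic limit coincides with convergence in probability, the assertion follows with essentially no further analytic work.

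Concretely, I would first observe that the coordinate projection $\pi^X:E\to C_{t,[0,1]}$, $(W,X,Z)\mapsto X$, is continuous, so that the induced pushforward map $\iota:\mc{P}(E)\to\mc{P}(C_{t,[0,1]})$, $\nu\mapsto\pi^X_\#\nu$, is continuous for the weak topologies, and $\iota(L^{N,\eps})=\frac{1}{N}\sum^N_{i=1}\delta_{X^{i,N,\eps}}$. Next I would fix an arbitrary subsequence of the parameters $(N,\eps)$; by the tightness proposition above and Prokhorov's theorem, along a further subsequence $\text{Law}(L^{N,\eps})$ converges weakly on the Polish space $\mc{P}(\mc{P}(E))$ to some $Q$, and choosing $L$ with $\text{Law}(L)=Q$, Corollary \ref{cor:limit_pt} gives $\pi^X_\#L=\text{Law}(\bar X)$ $P$-a.s., that is $\iota_\#Q=\delta_{\text{Law}(\bar X)}$, where $\bar X$ is the unique solution of \eqref{meanfield_SDE_single} started from $\text{Law}(X_0)$. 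By the continuous mapping theorem applied to $\iota$, $\text{Law}(\iota(L^{N,\eps}))=\iota_\#\text{Law}(L^{N,\eps})$ then converges weakly to the single Dirac mass $\delta_{\text{Law}(\bar X)}$ along this sub-subsequence. Since this limit does not depend on the chosen subsequences, the standard sub-subsequence criterion upgrades it to weak convergence of the whole family $\text{Law}(\frac{1}{N}\sum^N_{i=1}\delta_{X^{i,N,\eps}})$ to $\delta_{\text{Law}(\bar X)}$ as $N\to\infty$, $\eps\to 0$; finally, weak convergence of the laws to a Dirac mass at a deterministic point of $\mc{P}(C_{t,[0,1]})$ is equivalent to convergence in probability to that point, which is exactly the claimed statement.

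There is essentially no obstacle left at this stage: the hard analytic input — the uniform $BV$ and H\"older bounds of Lemmas \ref{uniform_Holder} and \ref{unif_Holder_drift}, which yield tightness, and the identification of the limit points as solutions of the McKean-Vlasov SDE through the generalized Yamada--Watanabe argument — has already been carried out. The only mild point of care, also already addressed, is that each of these inputs is uniform in both $N$ and $\eps$, so that the joint limit $N\to\infty$, $\eps\to 0$ can be taken without imposing any relation between the two parameters; in particular one may let $\eps\to 0$ first and then $N\to\infty$, recovering the convergence of the original (unregularized) particle system \eqref{SDE_single}.
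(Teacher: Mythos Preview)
Your proof is correct and follows essentially the same route as the paper's: reduce convergence in probability to convergence in law via the deterministic limit, then use tightness of $(\text{Law}(L^{N,\eps}))_{N,\eps}$ together with Corollary \ref{cor:limit_pt} (and the continuity of the pushforward $\nu\mapsto X_\#\nu$) to identify every limit point as $\delta_{\text{Law}(\bar X)}$. Your write-up is somewhat more explicit about the continuous mapping and sub-subsequence steps, but there is no substantive difference.
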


\begin{proof}
Since the limit $\text{Law}(\bar{X})$ is deterministic (and $\mc{P}(E)$ is a metric space), it is enough to prove convergence in law. Since $\mc{P}(\mc{P}(E))$ is a metric space and the family $(\text{Law}(X_\#L^{\eps,N})_{\eps,N})_{\eps,N}$ is relatively compact (that is tight), it is enough to prove that every limit point of $(\text{Law}(X_\#L^{\eps,N})_{\eps,N})_{\eps,N}$ is actually $\delta_{\text{Law}(\bar{X})}$. This is an immediate consequence of Corollary \ref{cor:limit_pt}. The proof is complete.
\end{proof}

\subsection{Pathwise analysis}

This subsection is dedicated to the proof of Proposition \ref{pro: convergence rate}; we assume in this Subsection the conditions of Proposition \ref{pro: convergence rate}. We use a pathwise approach developed e.g. in \cite{CDFM2020}, we explain first briefly the core idea behind it. Let $(\Omega, \mathcal{A}, \mathbb{P})$ be a probablity space and $W: \Omega \to C([0,T], \mathbb{R})$ a random variable on this space. Note that at this point we do not impose that $W$ is a Brownian Motion.
Consider the SDE
\begin{align}
\begin{aligned}
\label{eq: boundary sde}
&\rmd X_t = [ -\mu( X _t) + \mathbb{E}[\mu ( X _t )] ] \rmd t + \dot{q}_t \rmd t + \sigma \rmd W_t - \rmd k_t - \sigma \mathbb{E}[\rmd W_t] + \mathbb{E}[\rmd k_t]\\
& X\in C([0,T];[0,1]),\  k\in C([0,T];\mr),\\
&\rmd| k| = 1_{X_t\in \{0,1\}}\rmd| k|,\ \ \rmd  k = n( X_t)\rmd| k|.
\end{aligned}
\end{align}
If we endow the probability space with a (right-continuous complete) filtration $(\mathcal{F}_t)_{t\geq 0}$ and assume that $W$  is a Brownian Motion with respect to this filtration, clearly equation \eqref{eq: boundary sde} is exactly the McKean-Vlasov equation \eqref{meanfield_SDE_single}.

On the other hand, let $(X^{(N)}, k^{(N)})$ be the solution of the interacting particle system \eqref{SDE_single_reg}. Let $\omega \in \Omega$ be fixed. On a suitable discrete prabability space endowed with the point counting measure the process $(X^{(N)}, k^{(N)})(\omega) = (X^{i,N}(\omega), k^{i,N}(\omega))_{i=1,\dots,N}$ is a random variable in the variable $i$ and as such a solution to equation \eqref{eq: boundary sde}. The mean with respect to the point counting measure is exactly the empirical average.

This is the main idea behind the proof of the Lemma \ref{lem: continuity in wasserstein 2}. First we recall the definition of Wasserstein distance.
\begin{definition}
	\label{def: wasserstein}
	Let $(E,d)$ be a polish space. Let $\mathcal{P}_2(E)$ be the space of probability measures on $E$ with finite second moment. The $2$-Wasserstein distance on $\mathcal{P}(E)$ is defined as 
	\begin{equation*}
		\mathcal{W}_{2,E}(\mu,\nu) := \inf\left\{
		\left(
		\int_{E\times E} d(x,y)^2 m(dx,dy)
		\right)^{\frac{1}{2}}
		\mid
		m \mbox{ coupling of }\mu,\nu
		\right\}
		\qquad
		\mu,\nu \in \mathcal{P}_2(E).
	\end{equation*}
\end{definition}

From now on, we work under the assumptions of Proposition \ref{pro: convergence rate}.

\begin{lemma}
	\label{lem: continuity in wasserstein 2}
	Let $(\bar{X}, \bar{k})$ be the solution to equation
	\eqref{meanfield_SDE_single} with initial condition $\bar{X}_0$ with law $\nu_0$. Let $( X^{ ( N ) } , k^{ ( N ) } )$ be a solution to the interacting particle system \eqref{SDE_single_reg} with initial condition $X^{(N)} = (X^{1,N}_0, \dots, X^{N,N}_0)$. Assume that $(\bar{X}_0, X^{(N)}_0)$ is independent on the noise $W^{(N)} = (W^1, \dots, W^N)$.
	For every $t\in [0,T]$, we have $\mathbb{P} - a.s.$,
	\begin{align*}
	\mathcal{W}_{2,\mathbb{R}} ( \text{Law} ( \bar{X}_t), \frac1N \sum_{ i = 1 }^{ N } \delta_{X ^ { i , N } _ t} )^2  
	\leq & C \left ( 1 + \mathbb{E}\left[\left(\int_{0}^{T}\rmd|\bar{k}|_s\right)^2\right] + \frac1N \sum_{ i = 1 }^{ N }  \left(\int_{0}^{T}\rmd|k^{ i , N}|_s\right)^2 \right ) \\
	& \cdot \mathcal{W}_{2, C([0,T], \mathbb{R} )} ( \text{Law} ( W ), \frac1N \sum_{ i = 1 }^{ N } \delta_{W ^ { i } } )^2
	+ \mathcal{W}_{2,[0,1]} ( \nu^0 , \frac1N \sum_{ i = 1 }^{ N } \delta_{X ^ { i,N }_0 } )^2.
	\end{align*}
\end{lemma}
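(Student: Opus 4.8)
The plan is to exploit the pathwise reinterpretation just described: the interacting particle system, viewed as a system in the particle index $i$ equipped with the uniform (counting) probability measure, solves exactly equation \eqref{eq: boundary sde}, whereas $(\bar X,\bar k)$ solves it with the law of $W$ on the noise. So the statement becomes a \emph{stability estimate for equation \eqref{eq: boundary sde} with respect to the law of the driving noise and the initial condition}. Concretely, I would fix an optimal coupling $m$ on $C([0,T],\mathbb R)\times C([0,T],\mathbb R)$ between $\text{Law}(W)$ and $\frac1N\sum_i\delta_{W^i}$ realizing $\mathcal W_{2,C([0,T],\mathbb R)}$, together with a coupling of the initial conditions realizing $\mathcal W_{2,[0,1]}(\nu^0,\frac1N\sum_i\delta_{X^{i,N}_0})$, and run two copies of \eqref{eq: boundary sde} on the common probability space carrying this coupling: one driven by the first marginal (which, being a solution of \eqref{eq: boundary sde} with Brownian noise, must by Theorem \ref{thm:one} coincide in law with $(\bar X,\bar k)$), the other driven by the second marginal (which, by uniqueness of the particle system, reproduces $(X^{(N)},k^{(N)})$ up to relabeling). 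Denote these coupled solutions $(X,k)$ and $(\tilde X,\tilde k)$.

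The core computation is then an It\^o/chain-rule estimate on $|X_t-\tilde X_t|^2$, following exactly the scheme of Proposition \ref{prop:uniq_McKVla}. The key point is that both $\rmd k$ and $\rmd K$ act as projectors: the reflection terms give $-(X-\tilde X)\rmd k + (X-\tilde X)\rmd\tilde k$, and since the average of $X_t$ (resp.\ $\tilde X_t$) under the respective measure equals $q(t)$, the interaction terms $\E[\cdot]$ drop out after taking expectation; the drift contributes at most $c|X-\tilde X|^2$ by one-sided Lipschitzness. What does \emph{not} cancel here, unlike in the uniqueness proof, are the noise difference $\sigma(W-\tilde W)$ and the cross terms coming from the fact that the reflection terms are compared along \emph{different} paths: a term like $\int_0^t (X_r-\tilde X_r)\,\rmd(k_r-\tilde k_r)$ cannot be dropped because $X$ and $\tilde X$ hit the boundary at different times. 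The standard trick is to bound $|X_t-\tilde X_t|$ directly (not its square) by writing $X_t-\tilde X_t = (X_0-\tilde X_0) + \sigma(W_t-\tilde W_t) - (Z_t-\tilde Z_t) - \sigma(\E W_t - \E\tilde W_t) + \text{(mean }k\text{ terms)}$, and controlling the $BV$ terms $Z-\tilde Z$ by their total variations times the modulus of continuity of $W-\tilde W$; here the factors $\int_0^T\rmd|\bar k_s|$ and $\frac1N\sum_i\int_0^T\rmd|k^{i,N}_s|$ appear, and the uniform $BV$ bounds of Lemma \ref{unif_BV} (together with Lemma \ref{BV_sum}) guarantee they are finite with all moments. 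Taking $\sup_t$, squaring, and using Gr\"onwall absorbs the $c|X-\tilde X|^2$ contribution. Finally, projecting back — i.e.\ using that the joint law of $(X_t,\tilde X_t)$ is a coupling of $\text{Law}(\bar X_t)$ and $\frac1N\sum_i\delta_{X^{i,N}_t}$ — yields the bound on $\mathcal W_{2,\mathbb R}(\text{Law}(\bar X_t),\frac1N\sum_i\delta_{X^{i,N}_t})^2$.

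I expect the main obstacle to be the careful handling of the reflection cross-terms: making rigorous that $\|Z-\tilde Z\|_\infty$ (and hence the discrepancy between the two reflected paths) can be controlled by $\|W-\tilde W\|_\infty$ times a factor depending only on the total variations of the reflection terms. The natural tool is the Lipschitz dependence of the Skorokhod map on $[0,1]$ (in the spirit of \cite{MR1110990}), applied \emph{pathwise} in the frame that makes the reflection a one-dimensional normal reflection on an interval; one has to be slightly careful because the interaction term $\E[\rmd k]$ is itself only $BV$ in time (not Lipschitz) and couples the two solutions. Once this pathwise stability of the Skorokhod problem is in hand, the rest is a routine Gr\"onwall argument combined with the moment bounds from Lemmas \ref{BV_sum} and \ref{unif_BV}, and the passage from the coupled solutions to the Wasserstein distance is immediate since any coupling gives an upper bound on $\mathcal W_2$.
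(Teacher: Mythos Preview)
Your coupling setup is the same as the paper's, but your core computation diverges at the key step and carries a real gap.

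You propose to expand $\rmd|X_t-\tilde X_t|^2$. You then write that the reflection cross-term $\int_0^t (X_r-\tilde X_r)\,\rmd(k_r-\tilde k_r)$ ``cannot be dropped because $X$ and $\tilde X$ hit the boundary at different times''. This is not right: that term \emph{is} exactly the projector term $-(X-\tilde X)\rmd k+(X-\tilde X)\rmd\tilde k$, and each summand is $\le 0$ by the usual sign argument, regardless of when the two paths hit the boundary. The term that genuinely does not cancel in $\rmd|X-\tilde X|^2$ is the noise term $2(X-\tilde X)\,\sigma\,\rmd(W-\tilde W)$: under the optimal Wasserstein coupling the pair $(W,\tilde W)$ has no Gaussian structure, so $W-\tilde W$ is neither a martingale nor of bounded variation, and this integral cannot be defined or controlled by It\^o/BDG tools. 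Your fallback---bound $|X_t-\tilde X_t|$ directly via the Lipschitz dependence of the Skorokhod map in the spirit of \cite{MR1110990}---is precisely the route the paper flags in the introduction as ``not too helpful'' here, because the interaction term $\E[\rmd k]$ is only $BV$ in time and couples the two solutions.

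The paper's actual device is to absorb the noise and drift differences into the quantity being squared \emph{before} differentiating: it expands
\[
\tfrac12\,\rmd\Big(\bar X_t-\tilde X_t-(\Pi^1_t-\Pi^2_t)+\E_{P^\omega}[\Pi^1_t-\Pi^2_t]+\int_0^t[b(\bar X_s)-b(\tilde X_s)]\,\rmd s\Big)^2,
\]
whose differential is purely $BV$ (only $\rmd\bar k$, $\rmd\tilde k$, $\rmd\E[\bar k]$, $\rmd\E[\tilde k]$ appear). Now the projector inequalities kill $-(X-\tilde X)\rmd\bar k+(X-\tilde X)\rmd\tilde k$, the mean-$q$ constraint kills the $\E[\rmd k]$ terms after taking $\E_{P^\omega}$, and the surviving cross-terms are of the form $(\Pi^1-\Pi^2)\,\rmd\bar k$ etc., which are trivially bounded by $\sup_t|\Pi^1_t-\Pi^2_t|\cdot\|k\|_{BV}$. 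That is exactly where the factor $\big(\int_0^T\rmd|\bar k|\big)^2+\frac1N\sum_i\big(\int_0^T\rmd|k^{i,N}|\big)^2$ in the statement comes from. A Gr\"onwall step handles the drift, and choosing the optimal coupling concludes. Your plan is missing this ``shift the noise inside the square'' idea, and without it the argument as written does not close.
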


\begin{proof}
	For simplicity of notation, we take $\sigma=1$ (the argument is the same for general $\sigma\in \mr$). Call $\nu := \text{Law}(W)$ the Wiener measure on $C_t=C([0,T], [0,1])$.
	For a fixed $\omega \in \Omega$, we consider the empirical measure $L^{N,\omega} := \frac{1}{N}\sum_{i=1}^N \delta_{(W^i(\omega), X^{i,N}_0(\omega))}$ as a law on $E = ( C_t \times [0,1], \mathcal{B}( C_t ) \times \mathcal{B}([0,1]) )$. 
	Let $P^{\omega} \in \mathcal{P}(E\times E)$ be any coupling of $\nu \otimes \nu_0$ and $L^{N,\omega}$. It is easy to verify that $P^{\omega}$ can be seen as a measure on $\Omega^\omega := (E \times \{(W^1(\omega), X^{1,N}_0(\omega)), \dots , (W^N(\omega),X^{N,N}_0(\omega))\})$, endowed with the product $\sigma$-algebra $ \mathcal{A}^\omega := \mathcal{B}( C_t ) \times 2^{  \{(W^1(\omega), X^{1,N}_0(\omega)), \dots , (W^N(\omega),X^{N,N}_0(\omega))\}} $. Indeed, for every Borel bounded test function $\varphi : E \times E \to \mathbb{R}$,
	\begin{align*}
	P^\omega ( \varphi )
	& = \int_{ E \times E } \varphi( x, y ) P^\omega(\rmd x, y) L^{N,\omega}(\rmd y)\\
	& = \frac{1}{N} \sum_{ i = 1 } ^ { N } \int_{ E} \varphi( x , (W^i(\omega), X^{i,N}_0(\omega)) P^\omega(\rmd x, \rmd (W^i(\omega),X^{i,N}_0(\omega)))\\
	& = \int_{ \Omega^\omega } \varphi( x , y) P^\omega(\rmd x, y) L^{N,\omega}(\rmd y).
	\end{align*}
	
	On the space $(\Omega^\omega, \mathcal{A}^\omega , P^\omega)$ we define the projections $(\Pi^1,\Pi^1_0) \sim \nu \otimes \nu_0$ and $(\Pi^2,\Pi^2_0) \sim L^{N, \omega}$ on the first and second marginal space, respectively (in particular, $\Pi^1 \sim \nu$ and $\Pi^2 \sim \frac{1}{N}\sum_{i=1}^{N}\delta_{W^i(\omega)}$). 
	Since the law of $\Pi ^ 1$ is the Wiener measure $\nu$, we have that $\Pi ^ 1$ is a Brownian motion, and if we plug it as the driver of equation \eqref{eq: boundary sde} we obtain a strong unique solution $(\bar X, \bar k)$ thanks to Theorem \ref{thm:one}.
	
	Let $( X ^ { ( N ) }, k ^ { ( N ) } )$ be the solution of equation \eqref{SDE_single_reg} given by Proposition \ref{lem:wellpos_particle}. There exists a set of full measure $\Omega_0 \subset \Omega$ such that for every $\omega \in \Omega_0$ and every $1 \leq i \leq N$, $(X^{ i, N} ( \omega ), k^{ i , N } ( \omega ) ) $ satisfies equation \eqref{SDE_single_reg}. Defining $(\tilde X, \tilde k)(W^i(\omega), X^{i,N}_0(\omega) ) := (X^{ i , N } _t , k ^ { i , N } _t ) ( \omega )$, we have that, for every $t\in [0,T]$, $\mathbb{E}_{ P^\omega } [ \tilde k_t ] = \frac1N \sum_{ j = 1 }^{ N } k ^ { j , N} _t ( \omega )$ and
	\begin{equation*}
	\rmd \tilde X_t
	=  ( \mu ( \tilde X _t) - \mathbb{E}_{P^\omega}[ \mu ( \tilde X_t) ] ) \rmd t + \rmd q_t +  \rmd \Pi ^ 2 _t -  \rmd\tilde k_t - \rmd\mathbb{E}_{ P^\omega } [ \Pi^2_t ] + \rmd\mathbb{E}_{ P^\omega } [ \tilde k _t ],
	\quad \mbox{on } \Omega_0.
	\end{equation*}
	We define $b( \bar X_t) : = \mu ( \bar X_t ) - \mathbb{E}_{P^\omega} [\mu ( \bar X_t )]$ and we estimate the following
	\begin{align*}
	\frac12 & \rmd ( \bar X_t - \tilde X_t - (\Pi ^1 _t - \Pi ^2 _t ) + \mathbb{E}_{P^\omega}[\Pi ^1 _t - \Pi ^2 _t] 
	+ \int_{0}^t [ b ( \bar X_s ) - b ( \tilde X _s) ] \rmd s ) ^2\\
	& =  - ( \bar X_t - \tilde X_t ) \rmd \bar k_t + ( \bar X_t - \tilde X_t ) \rmd \tilde k_t \\
	&\quad + ( \bar X_t - \tilde X_t ) \rmd \mathbb{E}_{P^\omega}[ \bar k_t ] - ( \bar X_t - \tilde X_t ) \rmd \mathbb{E}_{P^\omega}[ \bar k_t ] 
	+ ( \Pi^1 _t - \Pi^2 _t) \rmd \bar k_t - ( \Pi^1 _t - \Pi^2 _t) \rmd \tilde k_t \\
	&\quad - ( \Pi^1 _t - \Pi^2 _t) \rmd \mathbb{E}_{P^\omega} [ \bar k_t ] +( \Pi^1 _t - \Pi^2 _t) \rmd \mathbb{E}_{P^\omega} [ \tilde k_t ] 
	- \mathbb{E}_{P^\omega} [ \Pi^1 _t - \Pi^2 _t] \rmd \bar k_t + \mathbb{E}_{P^\omega} [ \Pi^1 _t - \Pi^2 _t] \rmd \tilde k_t \\
	&\quad + \mathbb{E}_{P^\omega} [ \Pi^1 _t - \Pi^2 _t] \rmd \mathbb{E}_{P^\omega} [ \bar k_t ] - \mathbb{E}_{P^\omega} [ \Pi^1 _t - \Pi^2 _t] \rmd \mathbb{E}_{P^\omega} [\tilde k_t ]
	+ \left[ \int_{0}^t [ b ( \bar X_s ) - b ( \tilde X _s) ] \rmd s \right] \rmd(\mathbb{E}_{P^\omega} [ \bar k_t ] - \mathbb{E}_{P^\omega} [\tilde k_t ]).
	\end{align*}
	The first and second term on the right-hand side are always negative by the conditions on the boundaries. If we take expectation under $P^\omega$ on both sides, we have that the third and fourth term on the right-hand side vanish, because $\mathbb{E}_{P^\omega} [\bar X_t ] = \mathbb{E}_{P^\omega}[ \tilde X_t ] = q_t$. Similarly, the expectation of the last term vanishes. Hence, we have that
	\begin{align*}
	\mathbb{E}_{P^\omega} & [ \; | \bar X_t - \tilde X_t - (\Pi^1_0 - \Pi^2_0)- (\Pi ^1 _t - \Pi ^2 _t ) + \mathbb{E}_{P^\omega}[\Pi ^1 _t - \Pi ^2 _t]
	+ \int_{0}^t [ b ( \bar X_t ) - b ( \tilde X _s) ] \rmd s   | ^2] \\
	\leq & 2 \mathbb{E}_{P^\omega} [ \sup_{ t \in [ 0 , T ] } | \Pi^1_t - \Pi^2_t|^2] \left( \mathbb{E}_{P^\omega}\left[\left(\int_{0}^{T} \rmd|\bar k|_s \right)^2\right] + \mathbb{E}_{P^\omega}\left[\left(\int_{0}^{T} \rmd|\tilde k|_s\right)^2\right] \right)\\
	&+ \mathbb{E}_{P^\omega} [ | \Pi^1_0 - \Pi^2_0|^2]
	\end{align*}
	The proof is concluded by first using Gronwall's lemma and then choosing $P^\omega = P^{\omega}_{W} \otimes P^{\omega}_0$, where $P^{\omega}_W$ (resp. $P^{\omega}_0$) is the optimal coupling in $\mathcal{W}_{2,C_t} ( \nu , \frac1N \sum_{ i = 1 }^{ N } \delta_{W ^ { i } } )$ (resp. $\mathcal{W}_{2,[0,1]} ( \nu_0 , \frac1N \sum_{ i = 1 }^{ N } \delta_{X ^ { i,N }_t } )$)
\end{proof}

Thanks to the previous proposition, it is immediate to derive the convergence of the particle system to the McKean-Vlasov equation, provided that we have convergence at time $0$ and a bound on the second moment of $k^N$.

\begin{proof}[Proof of Proposition \ref{pro: convergence rate}]
	By Lemma \ref{lem: continuity in wasserstein 2} and using H\"older inquality, we have
	\begin{align*}
	\mathbb{E}\left[ \sup_{ t \in [ 0 , T ] } \mathcal{W}_{2,[0,1]} ( \text{Law} ( \bar X_t), \frac1N \sum_{ i = 1 }^{ N } \delta_{X ^ { i , N } _t} )  \right]
	\leq  &C \left ( 1+ \mathbb{E} \left[\left(\int_{0}^{T} \rmd|\bar k|_s \right)^2\right] +  \frac1N \sum_{ i = 1 }^{ N }  \mathbb{E}\left[\left(\int_{0}^{T} \rmd|k^{ i , N}|_s \right)^2 \right] \right ) ^{\frac{1}{2}}\\
	& \cdot \mathbb{E} \left[ 
	\mathcal{W}_{2,C([0,T], \mathbb{R} )} ( \text{Law} ( W ), \frac1N \sum_{ i = 1 }^{ N } \delta_{W ^ { i } } )^2 \right]^{\frac{1}{2} }\\
	& + C\mathbb{E} \left[  \mathcal{W}_{2,[0,1]} ( \text{Law}(\bar{X}_0) , \frac1N \sum_{ i = 1 }^{ N } \delta_{X ^ { i,N }_0 } ) \right].
	\end{align*}
	The first term on the right-hand side is uniformly bounded in $N$ thanks to Lemma \ref{BV_sum} and the $2$-integrability of $\|\bar{k}\|_{BV}$ in Lemma \ref{lemma:BV}. The empirical measure of independent random variables distributed as the the Wiener measure convegres in Wasserstein metric to the Wiener measure as $O(1/\sqrt{\log(N)})$, see \cite{boissard2014mean}. 
	The Wasserstein distance of the intial conditions converges faster. Remember $X^{i,N}_0 = Y^i + \sum_{j=1}^N\delta_{Y^j} + q(0)$, where $(Y^i)_{i\in\mathbb{N}}$ is a family of independent and identically distributed random variables. We see that the speed of convergnce of $\frac{1}{N}\sum_{i=1}^N \delta_{X^{i,N}_0}$ is the same as the the speed of convergence of $\sum_{i=1}^N\delta_{Y^j}$, which is $1/\sqrt{N}$, see \cite{fournier2015rate}. For a fixed $\omega \in \Omega$, take an optimal coupling $m = m(\omega)$ between $\text{Law}(\bar{X}_0)$ and $\frac{1}{N}\sum_{i=1}^{N} \delta_{Y^i}$, we have that $(x, y-\mathbb{E}_{m}[y] -\mathbb{E}_{m}[x])_{\#}m(dx,dy)$ is a coupling between $\text{Law}(\bar{X}_0)$ and $\frac{1}{N}\sum_{i=1}^{N} \delta_{X_0^i}$. We can compute
	\begin{align*}
		\mathcal{W}_{2,[0,1]} ( \text{Law}(\bar{X}_0) , \frac1N \sum_{ i = 1 }^{ N } \delta_{X ^ { i,N }_0 } )^2
		\leq &\mathbb{E}_{m}\left[
		|X - Y + \mathbb{E}_{m}Y -\mathbb{E}_mX|^2
		\right]
		= \operatorname{Var}_{m}(X-Y)\\
		\leq &\mathbb{E}_{m}|X-Y|^2
		= \mathcal{W}_{2,[0,1]} ( \text{Law}(\bar{X}_0)  , \frac1N \sum_{ i = 1 }^{ N } \delta_{Y^ { i,} } )^2.
	\end{align*}
	Taking the square roots and the expectation under $\mathbb{P}$ concludes the proof.

\end{proof}

%

\section{Appendix: Proof of Proposition \ref{lem:wellpos_particle}}\label{app:B}

The system \eqref{SDE_single_reg} can be seen as an SDE on the moving domain $H_t\cap [0,1]^N$, where
\begin{align*}
H_t = \{x\in \mr^N \mid \frac{1}{N}\sum_{i=1}^N x^i =q(t)\},
\end{align*}
with normal boundary conditions. Indeed, formally, for each $i=1,\ldots N$ and $m=0,1$, on the boundary $x^i=m$, the direction of reflection $(-1)^m(e_i-N^{-1}(1,\ldots 1))$ ($e^i$ being the $i$-th vector of the canonical basis) is orthogonal to the face $H_t\cap \{x\mid x^i=m\}$. Here we use this fact to show well-posedness of the system \eqref{SDE_single_reg}.

We introduce some notation. In the following, we fix $N$ and omit the superscripts $N$ and $\eps$ in the notation. We call $H=\{x\in\mr^N \mid \frac{1}{N}\sum_{i=1}^N x^i =0\}$, $\mathrm{1} = (1,1,\ldots 1)\in \mr^N$, $\Pi:\mr^N\to\mr^N$ the projector on $H$, that is $\Pi x= x-N^{-1}(x\cdot \mathrm{1})\mathrm{1}$. We take $A:H\to \mr^{N-1}$ a linear isometry and we call $D_t = A\Pi(H_t\cap [0,1]^N)$. For $i=1,\ldots N$, $m=0,1$, we call $\partial_{i,m}[0,1]^N = \{x\in [0,1]^N\mid x^i=m\}$, $\partial_{i,m}D_t = A\Pi(H_t\cap \partial_{i,m}[0,1]^N)$, $\gamma_{i,m} = (-1)^m (e_i-N^{-1}\mathrm{1})$ the direction of reflection of \eqref{SDE_single_reg} on the face $\partial_{i,m}[0,1]^N$ and $\nu_{i,m} = A\gamma_{i,m}$. For $x$ in $\partial[0,1]^N = \cup_{i,m} \partial_{i,m}[0,1]^N$, we call
\begin{align*}
\Gamma(x) = \{\sum_{i,m} c_{i,m}\gamma_{i,k}1_{x\in \partial_{i,k}[0,1]^N} \mid c_{i,m}\ge 0 \,\,\forall i=1,\ldots N,m=0,1\}.
\end{align*}
Similarly, for $y$ in $\partial D_t = \cup_{i,m} \partial_{i,m}D_t$, we call
\begin{align*}
N_t(y) = \{\sum_{i,m} c_{i,m}\nu_{i,k}1_{y\in \partial_{i,k}D_t} \mid c_{i,m}\ge 0 \,\,\forall i=1,\ldots N,m=0,1\},
\end{align*}
note that $N_t(y) = A\Pi(x)$ if $x$ is in $\partial_{i,m}[0,1]^N$.

We consider the following SDE on $D_t$:
\begin{align}
\begin{aligned}\label{eq:SDE_moving_plane}
&\rmd Y_t = A\Pi b(t,A^{-1}Y_t +q(t)\mathrm{1}) \rmd t +\Pi\rmd W_t +\rmd h_t,\\
&Y_t \in D_t \,\,\forall t,\quad P\text{-a.s.},\\
&\rmd|h|_t = 1_{Y_t\in \partial D_t}\rmd|h|_t,\ \ \rmd h_t = \nu_t\rmd|h|_t,\ \ \nu_t \in N_t(Y_t),
\end{aligned}
\end{align}
where $(Y,h)$ is the solution, $W$ is an $N$-dimensional Brownian motion with respect to a (complete, right-continuous) filtration $(\mc{F}_t)_t$ and $b$ is the drift of the system \eqref{SDE_single_reg}. This is an SDE on a moving domain $D_t$ with reflection at the boundary. As we will see, the SDE \eqref{eq:SDE_moving_plane} is, up to the isometry $A$, the system \eqref{SDE_single_reg}.

\begin{lemma}\label{lem:exist_SDE_moving_plane}
Under Condition \ref{assumptions_q} on $q$ and the Lipschitz continuity of $\mu^\eps$, given a probability space $(\Omega,\mc{A},P)$ and a Brownian motion $W$ on a (complete right-continuous) filtration $(\mc{F}_t)_t$, there exists a unique (strong) solution to the SDE \eqref{eq:SDE_moving_plane}.
\end{lemma}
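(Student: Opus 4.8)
The plan is to recast \eqref{eq:SDE_moving_plane} as a time‑dependent Skorokhod reflection problem on the moving convex set $D_t$, to solve this reflection problem pathwise, and then to close the loop with the Lipschitz drift by a Banach fixed‑point argument; strongness comes for free since the construction is deterministic in the driving path. I first record the geometry of $(D_t)_t$. Since $\Pi(q(t)\mathbf 1+z)=z$ for $z\in H$, we have $D_t=A\big(H\cap([0,1]^N-q(t)\mathbf 1)\big)$, a bounded convex polytope in $\mr^{N-1}$ whose facets are $A\big(H\cap\{z^i=-q(t)\}\big)$ and $A\big(H\cap\{z^i=1-q(t)\}\big)$. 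Condition \ref{assumptions_q} ($\xi\le q(t)\le 1-\xi$) forces the origin of $H$ to stay at distance $\ge\xi$ from every facet, so $D_t$ contains a ball of a fixed radius $r_0>0$, uniformly in $t$; and since $q$ is Lipschitz, $t\mapsto D_t$ is Lipschitz for the Hausdorff distance. At $y\in\partial D_t$ the set $N_t(y)$ defined before the statement is exactly the closed convex normal cone of $D_t$ at $y$, so \eqref{eq:SDE_moving_plane} prescribes the normal reflection at $\partial D_t$ of a uniformly non‑degenerate, Lipschitz‑in‑time family of convex sets.

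\textbf{Skorokhod map.} Next I would invoke the solvability of the Skorokhod problem on such moving convex domains: for every continuous input $w\in C([0,T];\mr^{N-1})$ with $w_0\in D_0$ there is a unique couple $(y,h)$, $h$ of bounded variation, with $y_t=w_t+h_t\in D_t$ for all $t$, $\rmd|h|_t$ carried by $\{y_t\in\partial D_t\}$ and $\rmd h_t=\nu_t\rmd|h|_t$ with $\nu_t\in N_t(y_t)$; moreover the resolution map $\Gamma\colon w\mapsto y$ is causal and Lipschitz for the uniform norm, with constant depending only on $r_0$, the Lipschitz constant of $t\mapsto D_t$ and $T$. On a fixed convex domain this is classical (Tanaka \cite{Tan1979}, Lions--Sznitman \cite{LioSzn1984}); for the moving convex polytope here one runs the same penalization scheme (\cite{Men1983}), the uniform inner radius $r_0$ and the Lipschitz motion of the facets yielding $t$‑uniform constants --- alternatively one first flattens $D_t$ onto the fixed polytope $D_0$ by a bi‑Lipschitz time‑dependent change of variables, turning normal reflection into an oblique reflection still admissible on a convex polytope, and cites the fixed‑domain result. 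A simplification worth noting: because $\Pi$ annihilates $\mathbf 1$, the drift in \eqref{eq:SDE_moving_plane} equals $F(t,y):=A\Pi\tilde\mu\big(A^{-1}y+q(t)\mathbf 1\big)$ with $\tilde\mu^i(x):=-\mu^\eps(x^i)+\tfrac1N\sum_{j=1}^N\mu^\eps(x^j)$; since $\mu^\eps$ is Lipschitz, $F$ is globally Lipschitz in $y$ uniformly in $t$ and continuous in $t$, say $|F(t,y)-F(t,y')|\le L|y-y'|$.

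\textbf{Fixed point and conclusion.} For $Y\in C([0,T];\mr^{N-1})$ with the given $\mc{F}_0$‑measurable $D_0$‑valued initial value $Y_0$, set
\[
\Phi(Y)_\cdot:=\Gamma\Big(Y_0+A\Pi W_\cdot+\int_0^\cdot F(s,Y_s)\,\rmd s\Big).
\]
Since $A\Pi W$ is a standard $(N-1)$‑dimensional Brownian motion and $\Gamma$ acts pathwise and causally, $\Phi(Y)$ is adapted whenever $Y$ is. By the Lipschitz bounds on $\Gamma$ and $F$, there is a deterministic $T'>0$ (independent of $\omega$) such that $\Phi$ is a strict contraction on $C([0,T'];\mr^{N-1})$ in the uniform norm, for $P$‑a.e.\ $\omega$; its unique fixed point $Y$, together with $h:=Y-Y_0-A\Pi W-\int_0^\cdot F(s,Y_s)\,\rmd s$, solves \eqref{eq:SDE_moving_plane} on $[0,T']$, and the properties of $\Gamma$ guarantee that $h$ is continuous, $BV$, with $\rmd|h|$ supported on $\{Y_t\in\partial D_t\}$ and $\rmd h\in N_t(Y_t)\rmd|h|$. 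Since the length $T'$ is uniform, repeating the construction on $[T',2T'],[2T',3T'],\dots$ extends the solution to $[0,T]$; conversely any solution $(Y,h)$ has $Y$ a fixed point of $\Phi$ on each subinterval, so pathwise uniqueness on $[0,T]$ follows, and adaptedness makes it a strong solution.

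\textbf{Main obstacle.} The real work is the Skorokhod step: establishing existence, uniqueness and, above all, the \emph{uniform} Lipschitz estimate for the resolution map on the time‑dependent convex polytope $D_t$. On a fixed convex domain this is standard; the point here is to verify that the $t$‑uniform inner radius $r_0$ and the Lipschitz‑in‑time motion of the facets --- both consequences of Condition \ref{assumptions_q} --- suffice to carry the usual penalization argument (or the change of variables flattening $D_t$) through with constants independent of $t$. Once this is in place, the geometry step and the contraction step are routine.
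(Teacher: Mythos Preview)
Your approach is a valid alternative to the paper's, but it follows a genuinely different route. The paper does not build a Skorokhod resolution map and then close with a Picard iteration; instead it invokes directly a well‑posedness theorem for reflected SDEs on time‑dependent domains (Theorem~1.7 in \cite{nystrom2015remarks}) and spends all its effort verifying that theorem's hypotheses. Beyond checking that $N_t(y)$ is indeed the inward normal cone of $D_t$, the substantive work is a uniform \emph{angle condition} at corners (condition (1.16) in that reference): for every boundary point one must exhibit a unit vector making a uniformly positive inner product with every admissible reflection direction nearby. The paper verifies this by computing the pairwise scalar products $\nu_{i,m}\cdot\nu_{j,n}$ explicitly and by observing, via $\xi\le q\le 1-\xi$, that the active index set can never be all of $\{(1,0),\dots,(N,0)\}$ nor all of $\{(1,1),\dots,(N,1)\}$, which prevents the normal cone from degenerating and yields the lower bound $c_N N^{-1}(1-N^{-1})^{-1/2}>0$.

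Your route, by contrast, treats the reflection as a black‑box Lipschitz map $\Gamma$ and handles the drift by contraction; this is conceptually clean and separates concerns nicely, but it shifts the entire burden onto the Skorokhod step for the moving polytope, which --- as you correctly flag --- is not covered by the fixed‑domain references you cite (Tanaka, Lions--Sznitman, Menaldi). The flattening trick you propose turns normal reflection into an oblique one, and Lipschitz continuity of the oblique Skorokhod map on a polytope again requires a corner condition of Dupuis--Ishii type. So in effect the angle computation that the paper performs explicitly would reappear inside your Skorokhod step; both proofs ultimately rest on the same geometric non‑degeneracy, but the paper isolates and verifies it directly while your proposal packages it inside $\Gamma$. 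What your approach buys is modularity and a clear separation of reflection from drift; what the paper's approach buys is a single citation plus an explicit, self‑contained verification of the one nontrivial geometric hypothesis.
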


\begin{proof}
The existence and uniqueness result is a consequence of \cite[Theorem 1.7]{nystrom2015remarks} for SDEs on moving domains with reflecting boundaries, provided that the assumptions of that theorem hold. We focus on two key assumptions, namely: a) the fact that $N_t(y)$ is the cone of inward normal vectors of $D_t$ at $y$, for every $t$ and every $y\in \partial D_t$; b) relation (1.16) in \cite{nystrom2015remarks}. The other assumptions of \cite[Theorem 1.7]{nystrom2015remarks} are easy to verify.

Concerning assumption a), we observe that, for each $i=1,\ldots N$, $m=0,1$, the vector $\gamma_{i,m}$ is the inward normal, in the $N-1$-dimensional convex polyhedron $H_t\cap [0,1]^N$, of the corresponding face $H_t\cap \partial_{i,m}[0,1]^N$: indeed $\gamma_{i,m}$ belongs to $H$ and, for every $v$ in $H\cap \partial_{i,m}[0,1]^N$, we have $\gamma_{i,m}\cdot v=0$. Since $A$ is an isometry, the vector $\nu_{i,m}=A\gamma_{i,m}$ is the inward normal, in the convex polyhedron $D_t$, of the corresponding face $\partial_{i,m}D_t = A\Pi(H_t\cap \partial_{i,m}[0,1]^N)$. Now $N_t(y)$ is the convex cone generated by $\nu_{i,m}$, with $i,m$ such that $y\in \partial_{i,m}D_t$. Hence $N_t(y)$ is the convex cone of inward normal vectors (in the sense of \cite[Definition 2.2]{nystrom2015remarks}), see e.g. formula (4.23) in \cite{costantini1992skorohod}.

Assumption b) reads as follows. Define
\begin{align}
a_{s,z}(\rho,\eta) = \max_{u\in\mr^{N-1},|u|=1}\, \min_{s\le t\le t+\eta}\, \min_{y\in \partial D_t,|y-z|\le \rho}\, \min_{\nu \in N_t(y), |\nu|=1}\, (\nu \cdot u).\label{eq:hp_b_def}
\end{align}
Condition (1.16) in \cite{nystrom2015remarks} reads
\begin{align}
\lim_{\eta\to 0}\lim_{\rho \to 0} \inf_{s\in [0,T]} \inf_{z\in \partial D_s} a_{s,z}(\rho,\eta) =a >0.\label{eq:hp_b}
\end{align}
In order to show this condition, we take
\begin{align*}
&I=I_{s,z}(\rho,\eta) = \cup_{s\le t\le t+\eta} \, \cup_{y\in \partial D_t,|y-z|\le \rho} \, \{(i,m)\mid y\in \partial_{i,m}D_t\},\\
&u_{s,z}(\rho,\eta) = c_I \sum_{(i,m)\in I_{s,z}(\rho,\eta)} \nu_{i,m},
\end{align*}
where $c_I>0$ is a positive constant such that $|u_{s,z}(\rho,\eta)|=1$; note that $\min_I c_I = c_N>0$. We also note that, for suitable $\rho_0>0$, $\eta_0>0$ (independent of $s$ and $z$), for every $\rho<\rho_0$ and $\eta<\eta_0$, for every $s$ and $z$, for every $j=1,\ldots N$, at most one element between $(j,0)$ and $(j,1)$ belongs to $I_{s,z}(\rho,\eta)$. Moreover, since the average of $y$ is in $[\xi,1-\xi]$ (for all $y\in \partial D_t$ for all $t$), $y^i$ cannot be all $0$, nor they can be all $1$, hence $I_{s,z}$ cannot be $\{(1,0),(2,0),\ldots (N,0)\}$ or $\{(1,1),(2,1),\ldots (N,1)\}$. As a consequence,
\begin{align}
\text{if }(i,m)\in I,\text{ then there exist at most } N-2 \text{ indices } j\neq i \text{ with }(j,m)\in I.\label{eq:nb_indices}
\end{align}
We compute the scalar products among $\nu_{i,m}$, using the isometry property of $A$:
\begin{align*}
&|\nu_{i,m}|^2 = |\gamma_{i,m}|^2 = 1-N^{-1},\\
&\nu_{i,m}\cdot \nu_{j,m} = \gamma_{i,m}\cdot \gamma_{j,m} = -N^{-1} \quad \text{for }i\neq j,\\
&\nu_{i,m}\cdot \nu_{j,n} = \gamma_{i,m}\cdot \gamma_{j,n} = N^{-1} \quad \text{for }i\neq j,m\neq n.
\end{align*}
We call $\hat{\nu}_{i,m} = (1-N^{-1})^{-1/2}\nu_{i,m}$. For $\eta<\eta_0$ and $\rho<\rho_0$, we get by \eqref{eq:nb_indices}, for every $(i,m)$ in $I=I_{s,z}(\rho,\eta)$,
\begin{align*}
\hat{\nu}_{i,m}\cdot u_I &= c_I (1-N^{-1})^{-1/2} \left( 1-N^{-1} -\sum_{(j,m)\in I,j\neq i} N^{-1} +\sum_{(j,n)\in I,j\neq i,m\neq n}N^{-1} \right)\\
&= c_I (1-N^{-1})^{-1/2} (1-N^{-1} -(N-2)N^{-1})\\
&\ge c_N N^{-1} (1-N^{-1})^{-1/2}.
\end{align*}
Now, for every $s,z$, for every $t\in [s,s+\eta]$ and $y\in \partial D_t$ with $|y-z|\le \rho$, $N_t(y)$ is contained in the convex cone generated by $\hat{\nu}_{i,m}$, $(i,m)\in I_{s,z}(\eta,\rho)$. Therefore, for $\eta<\eta_0$ and $\rho<\rho_0$, for every $s$ and $z$, we have
\begin{align*}
\nu \cdot u_I \ge c_N N^{-1} (1-N^{-1})^{-1/2},\quad \text{for every } \nu \text{ as in \eqref{eq:hp_b_def}}.
\end{align*}
and so $a_{s,z}(\rho,\eta) \ge c_N N^{-1} (1-N^{-1})^{-1/2}>0$, in particular \eqref{eq:hp_b} holds. The proof is complete.
\end{proof}

Now we show that the SDE \eqref{eq:SDE_moving_plane} is equivalent to the system \eqref{SDE_single_reg}. We introduce some notation. We take a Borel map
\begin{align*}
G:\{(t,y,v)\mid t\in [0,T], y\in \partial D_t, v\in N_t(y)\} \rightarrow [0,+\infty)^{N\times 2},\quad (t,x,v)\mapsto (c_{i,m})_{i=1,\ldots N,m=0,1},
\end{align*}
such that $c_{i,m}=0$ if $y$ does not belong to $\partial_{i,m}D_t$, and
\begin{align*}
v= \sum_{(i,m),y\in \partial_{i,m} D_t} c_{i,m}\nu_{i,m}.
\end{align*}
[Note that this map $G$ exists but is not uniquely determined: indeed, if $y$ belongs to $\partial_{i,0} D_t \cup \partial_{i,1}D_t$ for each $i$ (that is, $y=A\Pi x$ for some $x$ with $x^i\in \{0,1\}$ for each $i$), then $\nu_{i,m}$ are not linearly independent.] For a solution $(Y,h)$ to \eqref{eq:SDE_moving_plane}, with $\rmd h= \nu \rmd |h|$, we call
\begin{align*}
&X^{Y,h}_t = A^{-1}Y_t + q(t)\mathrm{1},\\
&k^{Y,h}_t = \int_0^t \sum_{(i,m)} G_{i,m}(r,Y_r,\nu_r) n(X^i_r)e_i 1_{X^i_r = m} \rmd |h|_r,
\end{align*}
recall that $n(m)=-(-1)^m$ is the outward normal of $[0,1]$ in $m=0,1$.

\begin{lemma}\label{lem:link_SDE}
Assume that $(Y,h)$ is a solution to \eqref{eq:SDE_moving_plane}. Then $(X^{Y,h},k^{Y,h})$ is a solution to the system \eqref{SDE_single_reg}.
\end{lemma}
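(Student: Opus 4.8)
The plan is to transport the reflected SDE \eqref{eq:SDE_moving_plane} on the moving domain $D_t$ back to the original $N$ coordinates via the affine isometry $x\mapsto A\Pi x$ restricted to the hyperplane $H_t$, and to check that under this change of variables the reflection term $h$ becomes precisely $k^{Y,h}$. So I would split the proof into three parts: (i) elementary geometry of the isometry, (ii) translation of the drift/noise and identification of $A^{-1}h$ with $-\Pi k^{Y,h}$, (iii) verification of the boundary conditions on $k^{Y,h}$.

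First I would record the geometry. For $x\in H_t$ one has $\Pi x = x - q(t)\mathrm{1}$, so $x\mapsto A\Pi x$ is an affine isometry of $H_t$ onto $\mr^{N-1}$ with inverse $y\mapsto A^{-1}y+q(t)\mathrm{1}$. Since a solution satisfies $Y_t\in D_t = A\Pi(H_t\cap[0,1]^N)$ for all $t$, it follows that $X^{Y,h}_t = A^{-1}Y_t+q(t)\mathrm{1}$ lies in $H_t\cap[0,1]^N$; in particular $X^{Y,h}_t\in[0,1]^N$, $\tfrac1N\sum_i (X^{Y,h})^i_t=q(t)$, $X^{Y,h}\in C([0,T];[0,1]^N)$, and $Y_t\in\partial_{i,m}D_t$ if and only if $(X^{Y,h})^i_t=m$. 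Writing $X:=X^{Y,h}$, $k:=k^{Y,h}$, applying $A^{-1}$ to \eqref{eq:SDE_moving_plane} and adding $q(t)\mathrm{1}$ gives
\begin{align*}
X_t = X_0 + \int_0^t \Pi b(r,X_r)\,\rmd r + \int_0^t\Pi\,\rmd W_r + A^{-1}h_t + (q(t)-q(0))\mathrm{1}.
\end{align*}
Since $\Pi\mathrm{1}=0$, a direct computation shows the $i$-th component of $\int_0^t\Pi b(r,X_r)\rmd r+(q(t)-q(0))\mathrm{1}$ equals $\int_0^t\big(-\mu^\eps(X^i_r)+\tfrac1N\sum_j\mu^\eps(X^j_r)+\dot q(r)\big)\rmd r$ and the $i$-th component of $\int_0^t\Pi\,\rmd W_r$ equals $\int_0^t(\rmd W^i_r-\tfrac1N\sum_j\rmd W^j_r)$, i.e.\ exactly the interaction drift and noise of \eqref{SDE_single_reg}.

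It then remains to prove $A^{-1}h_t=-\Pi k_t$. Using $\Pi e_i=e_i-N^{-1}\mathrm{1}$, $\gamma_{i,m}=(-1)^m(e_i-N^{-1}\mathrm{1})$ and $n(m)=-(-1)^m$ one obtains the key identity $\Pi(n(m)e_i)=-\gamma_{i,m}$. Combining this with the defining property $\nu_r=\sum_{(i,m)}G_{i,m}(r,Y_r,\nu_r)\nu_{i,m}$ of $G$ (the sum over the pairs with $Y_r\in\partial_{i,m}D_r$, equivalently $X^i_r=m$) together with $\nu_{i,m}=A\gamma_{i,m}$ yields
\begin{align*}
A^{-1}h_t = \int_0^t\sum_{(i,m)}G_{i,m}(r,Y_r,\nu_r)\,\gamma_{i,m}\,1_{X^i_r=m}\,\rmd|h|_r = -\Pi k_t,
\end{align*}
whose $i$-th component is $-k^i_t+\tfrac1N\sum_j k^j_t$. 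Hence $X$ satisfies the first line of \eqref{SDE_single_reg}. For the reflection conditions I would write $\rmd k^i_t=f^i_t\,\rmd|h|_t$ with $f^i_t=\sum_m G_{i,m}(t,Y_t,\nu_t)\,n(m)\,1_{X^i_t=m}$; since each $G_{i,m}\ge0$, one has $f^i_t=-G_{i,0}(t,Y_t,\nu_t)\le0$ on $\{X^i_t=0\}$, $f^i_t=G_{i,1}(t,Y_t,\nu_t)\ge0$ on $\{X^i_t=1\}$, and $f^i_t=0$ otherwise. Therefore $\rmd|k^i_t|=|f^i_t|\,\rmd|h|_t$ is carried by $\{t:X^i_t\in\{0,1\}\}$, so $\rmd|k^i_t|=1_{X^i_t\in\{0,1\}}\rmd|k^i_t|$, and since $n(0)=-1$, $n(1)=1$ one reads off $\rmd k^i_t=n(X^i_t)\,\rmd|k^i_t|$. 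Continuity and finite variation of $k^i$ (and of $|k^i|$) follow from those of $h$, which are part of the solution concept in Lemma \ref{lem:exist_SDE_moving_plane}.

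The only genuinely delicate step is the identity $A^{-1}h_t=-\Pi k_t$: it requires matching the cone $N_t(Y_t)$ of inward normals to $D_t$ with the reflection directions $\gamma_{i,m}$ of the polytope $[0,1]^N$ through the isometry, keeping careful track of the sign $(-1)^m$ against the outer normal $n(m)$, and dealing with the fact that the decomposition map $G$ is not uniquely determined. This non-uniqueness is harmless here, since one only claims pathwise uniqueness of \eqref{SDE_single_reg} in the $X$ component (different admissible choices of $G$ give different but equally valid reflection terms $k$); all the remaining steps are a routine change of variables.
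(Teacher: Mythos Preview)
Your proof is correct and follows the same route as the paper's: transport \eqref{eq:SDE_moving_plane} back through the affine isometry $y\mapsto A^{-1}y+q(t)\mathrm{1}$, identify $A^{-1}\rmd h$ with $-\Pi\,\rmd k^{Y,h}$ via the decomposition map $G$ and the identity $\Pi(n(m)e_i)=-\gamma_{i,m}$, and read off the boundary conditions for $k^{Y,h}$ from the nonnegativity of the $G_{i,m}$. Your write-up is in fact a bit more explicit than the paper's (you spell out the sign check $f^i_t=n(X^i_t)|f^i_t|$ and the role of the non-uniqueness of $G$), but there is no substantive difference in approach.
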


\begin{proof}
Let $(Y,h)$ be a solution to \eqref{eq:SDE_moving_plane}, take $(X,k)=(X^{Y,h},k^{Y,h})$. By the definition of $X^{Y,h}$ and $k^{Y,h}$, $P$-a.s. $X$ is has continuous paths with values in $[0,1]$ and $k$ has continuous paths, and, for each $i$, $|k^i|$ is concentrated on $\{t\mid X_i\in \{0,1\}\}$ and has direction $n(X^i)$. Hence the second and third lines of \eqref{SDE_single_reg} are satisfied. We have
\begin{align*}
\rmd h_t &= \sum_{(i,m)} G_{i,m}(t,Y_t,\nu_t) 1_{Y_t\in \partial_{i,m} D_t} \nu_{i,m} \rmd |h|_t\\
&= \sum_{(i,m)} G_{i,m}(t,Y_t,\nu_t) 1_{X^i_t=m} A(-1)^m (e_i -N^{-1}\mathrm{1}) \rmd |h|_t\\
&= -\sum_{(i,m)} G_{i,m}(t,Y_t,\nu_t) 1_{X^i_t=m} n(X^i_t) Ae_i \rmd |h|_t +N^{-1}\sum_{(i,m)} G_{i,m}(t,Y_t,\nu_t) 1_{X^i_t=m} n(X^i_t) A\mathrm{1} \rmd |h|_t\\
&= -A\rmd k_t +N^{-1}A(\rmd k \cdot \mathrm{1}) \mathrm{1}= A(-\rmd k_t +N^{-1}\sum_i \rmd k^i_t \mathrm{1}).
\end{align*}
Hence, applying the transformation $X_t =A^{-1}Y_t +q(t)\mathrm{1}$ to the first line of \eqref{eq:SDE_moving_plane}, we obtain the first line of \eqref{SDE_single_reg}. Therefore $(X^{Y,h},k^{Y,h})$ satisfies \eqref{SDE_single_reg}. The proof is complete.
\end{proof}

By Lemmas \ref{lem:exist_SDE_moving_plane} and \ref{lem:link_SDE}, we get existence of a solution to \eqref{SDE_single_reg}.

\begin{remark}
We expect also the converse of Lemma \ref{lem:link_SDE} to hold, namely, if $(X,k)$ solves \eqref{SDE_single_reg}, then $(Y=A\Pi X,h=-A\Pi k)$ solves \eqref{eq:SDE_moving_plane}. In particular, from this converse we would get uniqueness for \eqref{SDE_single_reg}. However showing the third line of \eqref{eq:SDE_moving_plane} is not immediate, hence we do not follow this strategy.
\end{remark}

We conclude the proof of Proposition \ref{lem:wellpos_particle} by showing uniqueness for \eqref{SDE_single_reg}:

\begin{lemma}
Strong uniqueness (in $X$) holds for the SDE \eqref{SDE_single_reg}.
\end{lemma}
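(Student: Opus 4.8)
The plan is to transpose the uniqueness argument for the McKean--Vlasov SDE (Proposition \ref{prop:uniq_McKVla}) to the finite, empirical-measure setting, once again using that the reflection terms and the interaction term both act as projectors: $k^i$ through its outward-normal orientation, and the interaction $\frac1N\sum_j(\dots)$ through the fact that the differences of the particles live on the hyperplane $\{\frac1N\sum_i x^i=0\}$.

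First I would take two solutions $(X^{(N,\eps)},k^{(N,\eps)})$ and $(Y^{(N,\eps)},\ell^{(N,\eps)})$ of \eqref{SDE_single_reg} on the same probability space, driven by the same Brownian motions $W^i$ and with $X_0^{(N)}=Y_0^{(N)}$, and set $D^i:=X^{i}-Y^{i}$. The key observation is that the martingale parts of $X^i$ and $Y^i$ coincide (both equal $\sigma W^i-\frac1N\sum_j\sigma W^j$), so $D^i$ is a continuous process of zero quadratic variation and It\^o's formula for continuous semimartingales gives $\rmd|D^i|^2=2D^i\rmd D^i$. Writing $\rmd D^i$ from \eqref{SDE_single_reg}, the $\dot q$ and Brownian terms cancel; averaging over $i$, the interaction contributions $\frac2N\sum_i D^i\cdot\frac1N\sum_j(\mu^\eps(X^j)-\mu^\eps(Y^j))\rmd t$ and $\frac2N\sum_i D^i\cdot\frac1N\sum_j(\rmd k^j-\rmd\ell^j)$ vanish because $\frac1N\sum_i D^i=q(t)-q(t)=0$, using that every solution of \eqref{SDE_single_reg} satisfies $\frac1N\sum_i X^i_t=q(t)$ (the Remark after Proposition \ref{lem:wellpos_particle}) — this is the same cancellation that drives Lemmas \ref{BV_sum}--\ref{unif_BV}. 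One is then left with
\begin{align*}
\rmd\,\frac1N\sum_i|D^i|^2 = \frac2N\sum_i D^i(-\mu^\eps(X^i)+\mu^\eps(Y^i))\,\rmd t - \frac2N\sum_i D^i(\rmd k^i-\rmd\ell^i).
\end{align*}

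For the drift term I would use the Lipschitz continuity of $\mu^\eps$ (equivalently, the uniform one-side Lipschitz bound on $-\mu^\eps$), giving $D^i(-\mu^\eps(X^i)+\mu^\eps(Y^i))\le c|D^i|^2$. For the boundary terms, the standard computation with the outward normal $n$ on $\{0,1\}$ shows $-D^i\rmd k^i=-(X^i-Y^i)n(X^i)\rmd|k^i|\le 0$ (since $Y^i\in[0,1]$ and $|k^i|$ is supported on $\{X^i\in\{0,1\}\}$) and, symmetrically, $D^i\rmd\ell^i\le 0$. Hence $\frac1N\sum_i|D^i_t|^2\le\frac1N\sum_i|D^i_0|^2+2c\int_0^t\frac1N\sum_i|D^i_r|^2\rmd r$, and Gronwall's lemma together with $D^i_0=0$ forces $\frac1N\sum_i|D^i_t|^2\equiv 0$, i.e.\ $X^{(N,\eps)}=Y^{(N,\eps)}$. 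I do not expect a real obstacle: the only points requiring a little care are checking that $D^i$ has no martingale part (so the It\^o/pathwise chain rule applies cleanly) and invoking the hyperplane constraint for \emph{both} solutions to kill the interaction terms; everything else is the empirical version of the proof of Proposition \ref{prop:uniq_McKVla}. Note this gives uniqueness in $X^{(N,\eps)}$ only, consistent with the Remark preceding Proposition \ref{lem:wellpos_particle}.
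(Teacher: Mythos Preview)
Your proposal is correct and follows essentially the same approach as the paper's own proof: take two solutions, apply It\^o's formula to $|X^i-Y^i|^2$, use the one-side Lipschitz property of $-\mu^\eps$ for the drift term, the outward-normal orientation of $k^i,\ell^i$ for the reflection terms, and average over $i$ together with the constraint $\frac1N\sum_i X^i_t=\frac1N\sum_i Y^i_t=q(t)$ to annihilate the interaction terms, then conclude by Gronwall. Your explicit remark that the martingale parts of $X^i$ and $Y^i$ coincide (so $D^i$ has zero quadratic variation) is a small clarification the paper leaves implicit, but otherwise the arguments are the same.
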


\begin{proof}
The proof follows the line of Proposition \ref{prop:uniq_McKVla}, replacing the expectation with the empirical average. Let $(X,k^X)$, $(Y,k^Y)$ two solutions to \eqref{SDE_single_reg} with the same initial condition $X_0=Y_0$. In this proof we call $\rmd K^X = \frac{1}{N}\sum^N_{j=1}[\mu(X^j)\rmd t -\rmd W^j + \rmd k^{X,j}]$ and similarly for $K^Y$. By It\^o formula for continuous semimartingales, we have, for every $i=1,\ldots N$,
\begin{align*}
&\rmd|X^i-Y^i|^2\\
&= 2(X^i-Y^i)(-\mu(X^i)+\mu(Y^i))\rmd t +2(X^i-Y^i)\rmd K^X -2(X^i-Y^i)\rmd K^Y\\
&\ \ -2(X^i-Y^i)\rmd k^{X,i} +2(X^i-Y^i)\rmd k^{Y,i}.
\end{align*}
The one-side Lipschitz condition of $\mu$ implies
\begin{align*}
(X^i-Y^i)(-\mu(X^i)+\mu(Y^i))\le c|X^i-Y^i|^2.
\end{align*}
and the orientation of $k$ (as the outward normal) implies
\begin{align*}
-\int^t_0(X^i-Y^i)\rmd k^{X,i} \le 0
\end{align*}
and similarly for $(X^i-Y^i)\rmd k^{Y,i}$. For the addends with $K$, we average over $i$ and use that $K$ does not depend on $i$ and that $\frac{1}{N}\sum_i X^i_t = \frac{1}{N}\sum_i Y^i_t =q(t)$: we obtain
\begin{align*}
\frac{1}{N}\sum_i (X^i-Y^i)\rmd K^X =0
\end{align*}
and similarly for $(X^i-Y^i)\rmd K^Y$. Putting all together, we get
\begin{align*}
\frac{1}{N}\sum_i|X^i_t-Y^i_t|^2 \le C\int^t_0 \frac{1}{N}\sum_i|X^i_r-Y^i_r|^2\rmd r.
\end{align*}
We conclude by Gronwall inequality that $\frac{1}{N}\sum_i|X^i_t-Y^i_t|^2 =0$, that is $X=Y$. The proof is complete.
\end{proof}

\bibliographystyle{alpha}
\bibliography{bibliography}

\end{document}